\theoremstyle{plain}
\newtheorem{thm}{Theorem}[section]
\newtheorem{prop}[thm]{Proposition}
\newtheorem{lemma}[thm]{Lemma}
\newtheorem{sublemma}[thm]{Sublemma}
\newtheorem{cor}[thm]{Corollary}
\newtheorem{fact}[thm]{Fact}
\theoremstyle{definition}
\newtheorem{defn}[thm]{Definition}
\newenvironment{rmk}
{\pushQED{\qed}\rmkx}
{\popQED\endrmkx}
\newenvironment{example}
{\pushQED{\qed}\examplex}
{\popQED\endexamplex}
\newcommand{\barPH}{\overline{\mathbb{P}\mathcal{H}}}
\newcommand{\barM}{\overline{\mathcal{M}}}
\newcommand{\C}{\mathbb{C}}
\newcommand{\Z}{\mathbb{Z}}
\newcommand{\R}{\mathbb{R}}
\newcommand{\M}{\mathcal{M}}
\newcommand{\K}{\mathcal{K}}
\renewcommand{\H}{\mathcal{H}}
\renewcommand{\P}{\mathbb{P}}
\newcommand{\pert}{\operatorname{pert}}
\title{Measure bound for translation surfaces with short saddle connections}  
\author{Benjamin Dozier \thanks{Department of Mathematics, Cornell University, \href{mailto:benjamin.dozier@gmail.com}{\nolinkurl{benjamin.dozier@cornell.edu}}.}}
\begin{document}
\maketitle

\begin{abstract}
  We prove that any ergodic $SL_2(\R)$-invariant probability measure on a stratum of translation surfaces satisfies \emph{strong regularity}: the measure of the set of surfaces with two non-parallel saddle connections of length at most $\epsilon_1,\epsilon_2$ is $O(\epsilon_1^2 \cdot \epsilon_2^2)$.   We prove a more general theorem which works for any number of short saddle connections.  The proof uses the multi-scale compactification of strata recently introduced by Bainbridge-Chen-Gendron-Grushevsky-M\"oller and the algebraicity result of Filip. 
\end{abstract}

\setcounter{tocdepth}{1}  %
\tableofcontents
\setcounter{tocdepth}{4} %

\section{Introduction}
\label{sec:intro}

A \emph{translation surface} is a pair $(M,\omega)$, where $M$ is a Riemann surface, and $\omega$ is a holomorphic $1$-form on $M$ that is not identically zero.   This data determines a flat metric with singular points at the zeros of $\omega$.  The collection of all translation surfaces with the same genus and combinatorics of zeros form a \emph{stratum} $\H$, which admits a natural $SL_2(\mathbb{R})$-action.

Questions about the dynamics of the straight-line flow on individual surfaces are intimately connected to dynamics of this $SL_2(\mathbb{R})$-action on the space of surfaces.  The study of these dynamical systems has produced a flourishing field of research, bringing together techniques from Teichm\"uller theory, ergodic theory, homogeneous dynamics, and algebraic geometry.   Measures $\mu$ on $\H$ that are invariant under the $SL_2(\mathbb{R})$-action govern the dynamics.  This paper concerns the volume of sets of degenerating surfaces with respect to any such (ergodic) invariant measure.

A \emph{saddle connection} $s$ is a geodesic segment on a translation surface that connects two singular points (with no singular points in the interior).  We denote its length by $|s|$.  Even the (complex) projectivized stratum $\P\H$ is non-compact; a sequence of translation surfaces escapes every compact set iff there are saddle connections whose lengths go to zero.  A corollary of our main result is the following estimate on the volume of the set of surfaces with \emph{multiple} short saddle connections.    

\begin{thm}[Strong regularity]
  \label{thm:twosc}
  Let $\H$ be any stratum of translation surfaces, and let $\mu$ be any ergodic $SL_2(\mathbb{R})$-invariant probability measure on the unit area locus $\H^1\subset \H$.  Let 
  \begin{align*}
L_{\epsilon_1,\epsilon_2}:= &\{X \in \H^1 : X \text{ has non-parallel saddle connections } s_1,s_2, \\& \text{ with } |s_1|<\epsilon_1, |s_2|<\epsilon_2\}. 
  \end{align*}
  Then $$\mu(L_{\epsilon_1,\epsilon_2})  = O(\epsilon_1^2 \cdot \epsilon_2^2),$$
    where the implicit constant depends on $\mu$. 
  \end{thm}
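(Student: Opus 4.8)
The plan is to combine Filip's algebraicity theorem with the multi-scale compactification $\overline{\mathcal{H}}$ of Bainbridge--Chen--Gendron--Grushevsky--M\"oller (BCGGM), arguing by induction on the complexity of the stratum. By Filip's theorem, $\mu$ is the natural Lebesgue-class measure on an affine invariant submanifold $\mathcal{M}\subseteq\mathcal{H}$, cut out by real-linear equations in period coordinates; thus in period coordinates $\mu$ is a constant multiple of Lebesgue measure, and the holonomy of any saddle connection restricts to $\mathcal{M}$ as the complexification of a real-linear functional. This already settles the estimate for a single fixed pair: if $s_1,s_2$ are non-parallel somewhere on $\mathcal{M}$, the real structure prevents $z_{s_1}$ and $z_{s_2}$ from being $\mathbb{C}$-proportional while non-proportional over $\mathbb{R}$, so they are independent complex functionals on the tangent space, $X\mapsto(z_{s_1}(X),z_{s_2}(X))$ is a submersion onto $\mathbb{C}^2$, and $\mu\{|z_{s_1}|<\epsilon_1,\ |z_{s_2}|<\epsilon_2\}=O(\epsilon_1^2\epsilon_2^2)$. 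The real content is to promote this to a bound on the union over the infinitely many pairs, which pushes us to a neighborhood of $\partial\overline{\mathcal{M}}$; since a surface with a saddle connection shorter than $\max(\epsilon_1,\epsilon_2)$ lies outside a fixed compact set, it suffices to treat such a neighborhood. It is natural to prove at the same time the more general statement from the introduction --- a product bound $O(\prod_i\epsilon_i^2)$ for any number of pairwise non-parallel short saddle connections --- since this is what makes the induction close, the base case being a lowest-complexity stratum with compact unit-area locus, where the set is empty for small parameters.

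For the inductive step, cover a neighborhood of $\partial\overline{\mathcal{M}}$ by finitely many charts, one per boundary level graph $\Gamma$ with levels $0\succ-1\succ\cdots\succ-N$. In the chart for $\Gamma$ a nearby surface $X$ is recorded by a point $X_i$ in a boundary stratum $\mathcal{H}_i$ for each level $i$ (with the level-$i$ differential taken projectively), small level-transition parameters $t_{-1},\dots,t_{-N}$, and small plumbing parameters for the horizontal edges; inside the chart the boundary of $\mathcal{M}$ is a product-like locus whose level factors are affine invariant submanifolds $\mathcal{M}_i$ of strictly smaller complexity (that the boundary of an affine invariant submanifold is again affine is known from the BCGGM description together with work on boundaries of affine invariant submanifolds), so the inductive hypothesis applies on each level. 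The geometric input to extract from the BCGGM plumbing is a dictionary for the short saddle connections of $X$: each is either a horizontal edge $\gamma$, with $|z_s|=|z_\gamma|$; or a saddle connection living on a level $i<0$, with $|z_s|\asymp\lambda_i\,|\zeta_s|$, where the scale $\lambda_i$ is a fixed monomial in $|t_{-1}|,\dots,|t_i|$ and $\zeta_s$ is the order-one holonomy of the matching saddle connection on $X_i$; or a vanishing cycle of a vertical edge, controlled by the scales and by the residue data at that edge. To leading order the direction of a level-$i$ saddle connection is that of $\zeta_s$ rotated by $\arg\lambda_i$, so two saddle connections on the same level are non-parallel on $X$ exactly when they are non-parallel on $X_i$, while two on different levels are non-parallel for all but a lower-dimensional set of arguments of the intervening transition parameters.

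With this dictionary the estimate in each chart splits into cases according to where $s_1$ and $s_2$ live. If the pair consists of horizontal edges --- more generally, if it singles out two period coordinates of $X$ on which $\mu$ has bounded density --- the region is $O(\epsilon_1^2\epsilon_2^2)$ by the fixed-pair reasoning. If $s_1$ is a horizontal edge and $s_2$ lives below, $z_{s_1}$ contributes $O(\epsilon_1^2)$, while integrating the rescaled constraint $|\zeta_{s_2}|<\epsilon_2/\lambda_i$ against the level-$i$ measure and the transition parameters, via the single--saddle-connection case of the inductive hypothesis on level $i$, contributes $O(\epsilon_2^2)$. If $s_1,s_2$ both live on level $i$ they are non-parallel on $X_i$, so one applies the inductive hypothesis on $\mathcal{M}_i$ to the rescaled pair $(\epsilon_1/\lambda_i,\epsilon_2/\lambda_i)$ and then integrates over the transition parameters. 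The outcome of all these integrations hinges on the behaviour of $\mu$ near the boundary --- the exact order to which its density degenerates along the vertical and horizontal boundary divisors --- and pinning this down is the main obstacle: one needs it precisely enough that in every case the integral over the transition parameters converges and yields $O(\epsilon_1^2\epsilon_2^2)$ rather than a weaker quantity such as $\min(\epsilon_1,\epsilon_2)^2$. The remaining difficulties are bookkeeping: handling saddle connections at different levels by integrating over the full-measure non-parallel locus in each fibre, and checking that the finitely many boundary graphs really do exhaust all ways of creating two non-parallel short saddle connections with no graph contributing more than $\epsilon_1^2\epsilon_2^2$.
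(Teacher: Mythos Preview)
Your proposal identifies the correct ingredients --- Eskin--Mirzakhani, Filip's algebraicity, the multi-scale compactification, and the real-structure argument that non-parallel implies $\mathcal{M}$-independent --- but the overall architecture is genuinely different from the paper's, and there is a gap you yourself flag.

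\textbf{Where the approaches diverge.} You argue by induction on the complexity of the stratum, pushing the estimate to the boundary level pieces $\mathcal{M}_i$ and integrating over the transition parameters. The paper does \emph{not} induct. Instead, near each boundary point $\bar X$ it cuts a neighborhood into finitely many semianalytic ``multi-sector'' pieces $V_k$ and proves (Lemma~3.5) the delicate fact that each $V_k$ is an honest period coordinate chart, i.e.\ the period map is \emph{injective} on it. Filip's theorem plus semianalyticity then forces $\mathcal{M}\cap V_k$ to be a finite union of genuine linear subspaces $W$ of a single $\mathbb{C}^n$; on each $W$ the affine measure is literally Lebesgue, and the volume bound becomes a direct computation with an adapted basis (Lemmas~5.3--5.7). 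No recourse to a lower-complexity $\mathcal{M}_i$ is needed, and the question of how the density of $\mu$ degenerates along boundary divisors never arises: one is computing Lebesgue volume of an explicit region in $\mathbb{C}^n$.

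\textbf{The gap in your argument.} You explicitly identify, but do not resolve, the ``main obstacle'': controlling the order to which the density of $\mu$ degenerates along the vertical and horizontal boundary divisors precisely enough that the integrals over the transition parameters converge to $O(\epsilon_1^2\epsilon_2^2)$ rather than, say, $\min(\epsilon_1,\epsilon_2)^2$. This is exactly the heart of the matter, and a proof that leaves it open is not complete. There is also a structural issue with the induction itself: the boundary pieces $\mathcal{M}_i$ live in strata of \emph{meromorphic} differentials (with residue constraints), not holomorphic strata of lower complexity, so the inductive hypothesis as stated does not directly apply; one would need a version of the theorem for such loci and a measure theory for them, which is not the original statement. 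The paper's route avoids both difficulties simultaneously by never leaving the ambient $\mathcal{H}$: the charts $V_k$ sit inside $\mathcal{H}$ itself, and the linear subspaces $W$ inherit ordinary Lebesgue measure.
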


  We will in fact prove a version of the above for any number of saddle connections, Theorem \ref{thm:regularity}.  The general theorem requires a more technical notion generalizing non-parallel, so we postpone its statement.  
  Our proof establishes a new paradigm for exploring flat geometry using the recently constructed multi-scale compactification of strata (due to Bainbridge-Chen-Gendron-Grushevsky-M\"oller \cite{bcggm2019}).  

\subsection{Previous work}
We call the above property of the measure $\mu$ \emph{strong regularity}.  A substantially weaker version of regularity was needed by Eskin-Kontsevich-Zorich \cite{ekz2014} as a crucial technical assumption in their proof of a striking formula relating sums of Lyapunov exponents of the Teichm\"uller geodesic flow to Siegel-Veech constants.  This weaker form was proven by Avila-Matheus-Yoccoz \cite{amy2013} using an intricate hands-on argument.  But the above stronger theorem is the natural bound that one expects based on independence heuristics.

Strong regularity has been proved for the Masur-Veech measure $\mu_{MV}$ (whose support is all of $\H$) by Masur-Smillie \cite[proof of Theorem 10.3]{ms1991}, and for measures coming from rank 1 affine invariant manifolds by Nguyen \cite{nguyen2012}.  The natural analogue of the above for the locus of surfaces with a single $\epsilon$-short saddle connection is a volume bound of $O(\epsilon^2)$.  This bound can be easily derived, for all measures $\mu$, from the Siegel-Veech formula.

\subsection{Two or more short saddle connections}

What we will actually prove is a generalization of Theorem \ref{thm:twosc} (Strong regularity) that works for any number of short saddle connections.

In order to state the generalization, we work in the context of an \emph{affine invariant submanifold} $\M$ of $\H$.  We will give a brief review of key concepts; for more background, the reader is encouraged to consult the survey \cite{wrightSurvey}.

An affine invariant manifold is defined to be an immersed suborbifold that is locally cut out by homogeneous linear equations in period coordinates with real coefficients.  An affine invariant manifold $\M$ comes with an \emph{affine measure} $\mu_\M$ supported on $\M$.  Locally, this measure equals Lebesgue measure on the linear subspace $T_X\M$ of period coordinates corresponding to $\M$. There is also a finite measure $\mu_\M^1$ supported on the unit area locus $\M^1 \subset \M$, defined by ``coning'':
$$\mu_\M^1 (S) := \mu_\M \left(\{  sX : X \in S, \text{ } 0\le s \le 1\}\right),$$
for any measurable subset $S\subset \H^1$.  We will take this measure to be normalized to have total mass $1$.  

The landmark result of Eskin-Mirzakhani \cite{em2018} states that any ergodic $SL_2(\mathbb{R})$-invariant probability measure on $\H^1$ is equal to $\mu^1_\M$ for some affine invariant manifold $\M$; these affine measures are thus of central importance.   Our generalization will be stated in terms of affine invariant manifolds, and then in Section~\ref{sec:deduction} we will use \cite{em2018} to deduce Theorem \ref{thm:twosc} (Strong regularity) from the general theorem.  

The next definition gives the appropriate analogue of non-parallel for any number of saddle connections.  Recall that the linear structure on $\H$ is given locally near $X$ by the relative cohomology $H^1(X,\Sigma;\C)$, where $\Sigma$ denotes the set of zeros of the differential.

\begin{defn}
  Saddle connections $s_1,\ldots,s_k$ on a surface $X\in \M$ are said to be \emph{$\M$-independent} if their relative homology classes define linearly independent functionals on the linear subspace $T_X\M \subset T_X\H \cong H^1(X,\Sigma; \C)$.\footnote{This is the definition we use at a point $X$ in the smooth locus $\M^*\subset \M$, since at such a point $T_X\M$ is a single linear subspace.  At points of $\M-\M^*$,  $T_X\M$ is not a single linear subspace, but rather a finite union of linear subspaces, so the above definition needs modification.  However $\mu_\M(\M-\M^*)=0$, so for the purposes of Theorem \ref{thm:regularity}, it does not matter how we define $\M$-independence at such $X$.  For concreteness, one can define \emph{any} set of saddle connections to be $\M$-independent at $X\in \M-\M^*$.} 
\end{defn}

We can now state the main theorem that we will prove.  

\begin{thm}
    \label{thm:regularity}
  Let $\M\subset \H$ be an affine invariant manifold, and let
  \begin{align*}
    L_{\epsilon_1,\ldots,\epsilon_k}^{\mathcal{M}} := &\{ X\in \M^1: X \text{ has } \M\text{-independent saddle connections } s_1,\ldots,s_k,\\
  & \text{ with } |s_i| \le \epsilon_i \}.
  \end{align*}
  Then
  $$\mu_\M^1 \left(L_{\epsilon_1,\ldots, \epsilon_k}^\M\right)  = O(\epsilon_1^2 \cdots \epsilon_k^2),$$
  where the implicit constant depends on $\M$. 
\end{thm}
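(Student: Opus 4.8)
The plan is to work near the boundary of the multi-scale compactification $\overline{\mathcal{M}}$ of the affine invariant manifold $\M$, stratified by which saddle connections have degenerated. First I would fix $\M$-independent saddle connections $s_1,\dots,s_k$ of lengths $|s_i|\le\epsilon_i$ and observe that, because they are $\M$-independent, the period coordinates $z_i := \int_{s_i}\omega$ (for $i=1,\dots,k$) can be completed to a local system of period coordinates on $\M$ in which $z_1,\dots,z_k$ are free coordinates. In these coordinates the region $L^\M_{\epsilon_1,\dots,\epsilon_k}$ is contained in a set on which $|z_i|\le\epsilon_i$ for each $i$, and if the affine measure were simply a product of Lebesgue measure in the $z_i$ with Lebesgue measure in the remaining coordinates, we would immediately get $\prod_i (\pi\epsilon_i^2)$ times the measure of the complementary slice. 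The content of the theorem is that, after coning down to the unit-area locus and summing over the (finitely many, up to the $SL_2(\R)$- and affine structure) combinatorial types of short saddle connection configurations, this naive product bound survives with a uniform constant. So the real work is to control how the unit-area normalization and the transition between charts interact with the degeneration.

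The key technical input is the structure of $\mu_\M$ near the boundary strata of $\overline{\mathcal{M}}$. By Filip's algebraicity theorem (cited in the abstract), $\M$ and its closure are algebraic, and by Bainbridge-Chen-Gendron-Grushevsky-M\"oller the compactification $\overline{\mathcal{M}}$ has boundary strata indexed by level graphs, with local coordinates near a boundary point given by (perturbed) period coordinates on the lower-level pieces together with ``plumbing'' or scaling parameters $t_j$ governing the sizes of the vanishing cycles at each level passage. The next step is to show that, in a neighborhood of a boundary stratum corresponding to the degeneration of $s_1,\dots,s_k$, the affine measure $\mu_\M$ pushed forward to these coordinates is, up to a bounded factor, Lebesgue measure in the $z_i$ and the transverse variables. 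One then integrates: for each $i$, integrating $dz_i\,d\bar z_i$ over $|z_i|\le\epsilon_i$ gives $O(\epsilon_i^2)$, and the transverse integral is bounded because the boundary stratum has finite affine measure (again by the BCGGM description, the level-wise strata are themselves affine invariant manifolds of lower-dimensional strata, with finite volume). Summing over the finitely many level-graph types that can realize a given short configuration, and over the finitely many $\M$-orbits of saddle-connection tuples with bounded combinatorics, yields $\mu_\M\bigl(\{|z_i|\le\epsilon_i\ \forall i\}\cap(\text{area}\le 1)\bigr)=O(\prod_i\epsilon_i^2)$, and coning to $\M^1$ only changes this by a bounded factor since the area function is comparable to a fixed constant on the relevant region.

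The main obstacle, and where I expect most of the effort to go, is establishing that the affine measure is comparable to a product Lebesgue measure in coordinates adapted to all of $s_1,\dots,s_k$ simultaneously, uniformly as all the $\epsilon_i\to 0$ independently. Two difficulties compound here: first, the multi-scale compactification naturally sees a \emph{nested} sequence of scales (a level graph can have several levels), so the $t_j$-parameters are not the $z_i$ directly but products of them, and one must check that the change of variables from the BCGGM coordinates to the $(z_1,\dots,z_k)$-coordinates has Jacobian bounded above and below on the relevant polydisc — this is essentially the statement that the $s_i$ being $\M$-independent forces them to ``spread out'' across the level graph in a controlled way. Second, one must handle the non-smooth locus $\M-\M^*$ and the orbifold points, but since $\mu_\M(\M-\M^*)=0$ these contribute nothing, as the footnote in the statement already anticipates. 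A secondary but real issue is uniformity of the implied constant: there are infinitely many boundary strata a priori, but only finitely many \emph{topological} types of level graph for a fixed stratum $\H$, and the volume of each boundary piece is finite, so a compactness argument on $\overline{\mathcal{M}}$ gives the uniform bound. Once the local product structure is in hand, the integration and summation are routine.
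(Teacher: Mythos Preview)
Your high-level architecture is right --- compactify, work in adapted period coordinates near each boundary point, use Filip's algebraicity to get finitely many linear pieces of $\M$, and invoke compactness for a uniform constant. But two concrete gaps would block the argument as written.

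First, you assert that ``the transverse integral is bounded because the boundary stratum has finite affine measure,'' appealing to the level-wise strata being affine invariant manifolds of lower strata with finite volume. This is both unavailable and circular: no multi-scale compactification of $\M$ itself (your $\overline{\mathcal{M}}$) is constructed in the paper --- only the compactification $\overline{\mathbb{P}\mathcal{H}}$ of the \emph{stratum} is, and one then intersects with $\M$ via Filip. More importantly, finiteness of volume for the boundary pieces is essentially the $k=0$ case of what you are trying to prove, so you cannot assume it. The paper's route is different: it shows directly that the transverse periods are either uniformly bounded (those that live on level subsurfaces, by a period-bound lemma) or, when they cross a \emph{degenerating cylinder}, lie in a rectangle of bounded \emph{area} depending only on the cylinder's circumference period. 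The latter is the real subtlety --- periods crossing a horizontal node go like $\log t$ and are \emph{unbounded}, so the naive ``product of bounded factors'' picture fails and must be repaired by the bounded-area-rectangle lemma. Your proposal does not mention horizontal nodes or degenerating cylinders at all, and this is precisely where the Jacobian comparison you allude to would break down.

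Second, your strategy ``fix the saddle connections $s_1,\dots,s_k$ and complete $z_i=\int_{s_i}\omega$ to coordinates'' runs into the problem that infinitely many saddle-connection tuples could witness membership in $L^{\M}_{\epsilon_1,\dots,\epsilon_k}$, and it is not clear how to reduce to finitely many combinatorial types without already controlling the neighborhood of the boundary. The paper inverts the logic: for each boundary point $\bar X$ and each ordering of subsurfaces, it first builds a fixed basis $\beta_1,\dots,\beta_m$ adapted to the level structure, and then proves a lemma showing that \emph{any} saddle connection of level $\succeq Y$ has length bounded below by a constant times $|\beta_j(X)|$ for the appropriate $\beta_j$. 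Thus short $\M$-independent saddle connections force distinct $\beta_j$ to be short, and one integrates over the fixed $\beta$-coordinates rather than over the varying $s_i$-coordinates. This sidesteps the need to enumerate saddle-connection types.
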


Note that in contrast to Theorem \ref{thm:twosc}, the set $L_{\epsilon_1,\ldots,\epsilon_k}^{\mathcal{M}}$ we are looking at depends on $\M$.

\begin{rmk}
  \label{rmk:complex}
  The only place in the proof where we use that $\M$ is cut out by linear equations with \emph{real} coefficients (rather than arbitrary complex coefficients) is when we apply Filip's theorem \cite{filip2016} to get that $\M$ is an algebraic subvariety.  So the theorem above also holds for any subvariety of a stratum locally cut out by linear equations in period coordinates, with any complex coefficients.  In this setting, the existence of a globally defined linear measure $\mu$ supported on the locus is not immediate due to monodromy issues, but provided that one exists we can apply the theorem to it.  We will use this more general result in the proof of Theorem \ref{thm:kdiff} below.
\end{rmk}

\begin{rmk}
  The bound in Theorem \ref{thm:regularity} is \emph{sharp} in the sense that for any positive integer $k$, there exists $\M$ and a constant $c>0$ such that
  $$\mu_\M^1 \left(L_{\epsilon_1,\ldots, \epsilon_k}^\M\right)  \ge c \cdot \epsilon_1^2 \cdots \epsilon_k^2,$$
  when all the $\epsilon_i$ are sufficiently small.  Many constructions of surfaces with short saddle connections will achieve this; we will consider surfaces with several short slits which can be varied independently.  (See also \cite{ms1991} for related computations).  We take $\M$ to be the stratum $\H(1^{2k})$ (this notation means $2k$ simple zeros).  Let $X_k$ be a surface with a decomposition as in Figure \ref{fig:surfaceH12k}.  We consider a subset $\mathcal{S} \subset \M$ given by taking $X_k$ and varying by a small amount $\delta>0$ the side periods of the tori and the positions of the slits.  The $\delta$ is some fixed value independent of $\epsilon_1,\ldots,\epsilon_k$.  We then allow the slit given by $A_i,B_i$ to have any period of absolute value less than $\epsilon_i$, for each $i$.   Finally, we rescale all these surfaces to have area $1$.  This subset $\mathcal{S}$ is contained in $L_{\epsilon_1,\ldots, \epsilon_k}^\M$.   And
  \begin{align*}
    \mu^1_{\M}(\mathcal{S}) \ge c \cdot \epsilon_1^2 \cdots \epsilon_k^2,
  \end{align*}
  where $c$ depends only on $\delta$ (and hence not on the $\epsilon_i$).

\begin{figure}[]
\begin{center}
  \includegraphics[scale=0.83]{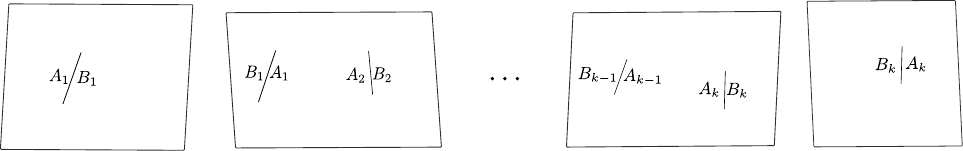}
  \caption{A surface $X_k$ in $\H(1^{2k})$.  The top and bottom sides of each parallelogram are glued together, as are the left and right sides. }
  \label{fig:surfaceH12k}
\end{center}
\end{figure}

\end{rmk}

Part of the motivation for proving the above theorem was as a test of our understanding of the structure of affine invariant manifolds near infinity.  Certain structural results about affine invariant manifolds are known, for instance Wright's Cylinder Deformation Theorem \cite{wright2015}.  But a full classification of affine invariant manifolds in all strata is still beyond reach.  The methods developed here, in particular the use of the multi-scale compactification to understand degenerations in affine invariant manifolds, should be helpful in this quest.

\subsection{Applications of Theorem \ref{thm:twosc} (Strong regularity)}
Gadre has proved several results about excursions of Teichm\"uller geodesics into the thin part of the moduli space, for geodesics chosen randomly according to some invariant measure $\mu$ \cite{gadre2017, gadre2019}.  These results were conditional on strong regularity of the measure, so they can now be given unconditional proofs.

There may be future applications of the theorem to counting problems for pairs of saddle connections.  For instance, the $L^2$-norm of the Siegel-Veech transform of a compactly supported indicator function is related to pairs of saddle connections.  Athreya-Cheung-Masur  \cite{acm2019} recently proved that this $L^2$-norm is finite for the case of the Masur-Veech measure $\mu_{MV}$; their proof uses strong regularity of $\mu_{MV}$ as well as some other special facts about this measure.  In the future we plan to investigate extending the result to any $\mu$ using Theorem \ref{thm:twosc} (Strong regularity).

\subsection{Deduction of Theorem \ref{thm:twosc} (Strong regularity) from Theorem \ref{thm:regularity}}
\label{sec:deduction}
This deduction involves applying Eskin-Mirzakhani \cite{em2018}, and then relating $\M$-independence of two saddle connections to the property of being non-parallel.  
\begin{proof}[Proof of Theorem \ref{thm:twosc} (Strong regularity) given Theorem \ref{thm:regularity}]
  By \cite[Theorem 1.4]{em2018}, $\mu=\mu^1_{\M}$ for some affine invariant manifold $\M$.  Let $\M^*\subset \M$ be the smooth locus (i.e. where the immersed suborbifold does not intersect itself).  We claim that 
  \begin{align}
    \label{eq:contain}
    \M^* \cap L_{\epsilon_1,\epsilon_2} \subset L^\M_{\epsilon_1,\epsilon_2}.
  \end{align}
  Consider some $X\in \M^* \cap L_{\epsilon_1,\epsilon_2}$, and let $s_1,s_2$ be non-parallel saddle connections with $|s_i|\le \epsilon_i$.  To prove the claim it suffices to show that $s_1,s_2$ are $\M$-independent.  Otherwise, since $s_1$ and $s_2$ both have non-zero length, we would have $s_1 -\alpha s_2=0$ for some $\alpha \in \C$, where we think of both sides as linear functionals on $T_X\M\subset H^1(X,\Sigma;\C)$.  Now since $\M$ is locally cut out by equations with \emph{real} coefficients, the annihilator $\operatorname{Ann}(T_X\M)$ in $H_1(X,\Sigma;\C)$ is spanned by real homology classes. 
  Since $s_1-\alpha s_2$ is in $\operatorname{Ann}(T_X\M) $, it then follows that $s_1-\operatorname{Re}(\alpha) s_2 \in \operatorname{Ann}(T_X\M)$.
  Since neither saddle connection can have zero holonomy vector, $\operatorname{Re}(\alpha)\ne 0$, and then we see that $s_1$ and $s_2$ must be parallel on $X$, contradiction.  Hence $s_1,s_2$ are $\M$-independent, and so we get \eqref{eq:contain}.
  
  Using that $\mu^1_{\M}$ is supported on $\M^*$, the inclusion \eqref{eq:contain}, and Theorem~\ref{thm:regularity}, we get
  $$\mu(L_{\epsilon_1,\epsilon_2}) = \mu^1_\M(\M^*\cap L_{\epsilon_1,\epsilon_2})\le \mu^1_\M(L_{\epsilon_1,\epsilon_2}^\M) = O(\epsilon_1^2 \epsilon_2^2).$$
\end{proof}

\subsection{Strata of $k$-differentials}

A $k$-differential on a Riemann surface, for $k$ a positive integer, is a form that can locally be written as $f(z)(dz)^k$, where $f$ is holomorphic; i.e. it is a section of the $k$-th power of the canonical line bundle.  

\begin{thm}[Finiteness of volume]
  \label{thm:kdiff}
  Let $\mu$ be the canonical linear measure whose support is a whole stratum $\K$ of $k$-differentials.  Then $\mu^1$ (supported on the unit area locus and defined via coning, as above) is finite: $$\mu^1(\K^1) <\infty.$$
\end{thm}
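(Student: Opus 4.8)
The plan is to deduce Theorem~\ref{thm:kdiff} from Theorem~\ref{thm:regularity} (in the complex-coefficient form noted in Remark~\ref{rmk:complex}) by using the standard correspondence between $k$-differentials and abelian differentials. First I would recall the \emph{canonical $k$-cover} construction: to a $k$-differential $(M,q)$ in a stratum $\K$ one associates a cyclic degree-$k$ cover $\pi\colon \hat M \to M$ on which $\pi^* q$ becomes the $k$-th power of an abelian differential $\omega$, i.e. $\omega^k = \pi^* q$. The surfaces $(\hat M,\omega)$ obtained this way, with their $\Z/k$-symmetry, sweep out a suborbifold $\hat\K$ of some stratum $\H$ of abelian differentials (or, more precisely, a component of the locus fixed by the deck group action on cohomology). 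This locus is locally cut out by linear equations in period coordinates — namely the equations expressing invariance under the deck transformation — but with \emph{complex} coefficients (roots of unity), which is exactly why we need the extended form of Theorem~\ref{thm:regularity} from Remark~\ref{rmk:complex}. The canonical linear measure $\mu$ on $\K$ pulls back to the canonical linear measure on $\hat\K$, and the construction is compatible with the $SL_2(\R)$-action and with coning, so finiteness of $\mu^1(\K^1)$ is equivalent to finiteness of $\hat\mu^1(\hat\K^1)$.

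Next I would reduce the finiteness statement to a tameness estimate near the boundary. Since $\hat\K^1$ is a manifold (orbifold) of finite dimension carrying a smooth measure, and the measure of any compact subset is finite, the only issue is the behavior as surfaces degenerate — that is, as one or more saddle connections become short. I would cover a neighborhood of infinity by the loci $L^{\hat\K}_{\epsilon,\ldots,\epsilon}$ and their analogues, and argue via a dyadic decomposition: write $\hat\K^1$ as the union of a compact part and the regions where the shortest saddle connection (more precisely, the shortest $\hat\K$-independent system of saddle connections) has length in $[2^{-(n+1)}, 2^{-n})$. Theorem~\ref{thm:regularity} gives that the region where one has $j$ $\hat\K$-independent saddle connections each of length $\le 2^{-n}$ has measure $O(2^{-2jn})$. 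One then needs to check that every noncompact end of $\hat\K^1$ is accounted for by having at least one short $\hat\K$-independent saddle connection — this is where one invokes that escaping every compact set forces a saddle connection to shrink, together with the observation that shrinking saddle connections on $\hat M$ project to (or are forced by) shrinking saddle connections compatible with the $\Z/k$-structure, so the relevant count of independent short saddle connections is at least one. Summing the geometric series $\sum_n O(2^{-2n})$ then gives finiteness.

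The main obstacle I anticipate is the bookkeeping at the boundary: making precise that the multi-scale degenerations of $\hat\K$ are controlled by $\hat\K$-independent saddle connections in the sense required by Theorem~\ref{thm:regularity}, and ensuring the dyadic cover is genuinely a cover of a neighborhood of infinity (no end is missed). In particular one must handle the case of several saddle connections degenerating at different rates — here the product form $O(\epsilon_1^2\cdots\epsilon_k^2)$ of Theorem~\ref{thm:regularity} is exactly what is needed, since a multi-scale region with scales $2^{-n_1}\gg \cdots \gg 2^{-n_k}$ contributes $O(2^{-2n_1}\cdots 2^{-2n_k})$ and these sum to a finite total over all tuples. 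A secondary technical point is checking that the canonical $k$-cover construction behaves well with respect to the orbifold structure and the normalization of measures, so that finiteness transfers cleanly; this is routine but must be stated carefully. Granting these, Theorem~\ref{thm:kdiff} follows.
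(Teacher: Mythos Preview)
Your proposal follows the same route as the paper: pass to the canonical $k$-cover to realize $\K$ as a complex-linear locus $\hat\K$ inside a stratum of abelian differentials, then invoke Theorem~\ref{thm:regularity} in the complex-coefficient form of Remark~\ref{rmk:complex}. The difference is that your second half --- the dyadic decomposition and geometric-series summation --- is unnecessary. Finiteness of $\mu^1(\hat\K^1)$ is already the $k=0$ case of Theorem~\ref{thm:regularity}: with no saddle-connection constraints the set $L^{\hat\K}_\emptyset$ is all of $\hat\K^1$ and the empty product on the right is $1$, so the conclusion reads $\mu^1(\hat\K^1)=O(1)$. (Equivalently, the proof of Theorem~\ref{thm:regularity} in Section~\ref{sec:proof} establishes total finiteness along the way, explicitly noting that no a priori finiteness is assumed.) So your dyadic argument is correct but redundant; the paper's proof is accordingly a single paragraph.

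One point you should make explicit, which the paper does and your proposal omits: Remark~\ref{rmk:complex} trades the real-coefficient hypothesis for a direct algebraicity hypothesis (since Filip's theorem is no longer available). You therefore need to observe that $\hat\K$ is an algebraic subvariety of $\H$ --- this holds because the canonical cover construction and the deck-group invariance conditions are algebraic --- before Theorem~\ref{thm:regularity} can be applied.
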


This was proven recently by Nguyen \cite{nguyen2019}, who also defines the measures $\mu$ above using the symplectic form on the absolute homology.  The theorem also follows easily from the results in this paper as we now show.  

\begin{proof}
For any holomorphic $k$-differential $(M,q)$, there is a canonical construction that produces a pair $(\hat M,\omega)$ with a finite cover $f:\hat M \to M$ such that $\omega$ is a holomorphic 1-form on $\hat M$ satisfying $\omega^k=f^*q$.  Applying this construction to all elements of $\K$ embeds $\K$ as some locus $\hat \K$ in some stratum $\H$ of holomorphic 1-forms.  This locus is locally cut out by linear equations in period coordinates, where the equations have complex coefficients (in fact, the equations can be chosen to lie in $\mathbb{Q}(\zeta_k)$, where $\zeta_k$ is a primitive $k$-th root of unity).   The measure $\mu$ becomes a measure supported on $\hat \K$ that is linear in period coordinates.  And $\hat \K$ is algebraic since it can be defined in terms of algebraic conditions.  So  according to Remark \ref{rmk:complex}, we can apply Theorem \ref{thm:regularity} to $\hat \K$, and in particular we get $\mu^1(\K^1) < \infty$.  
\end{proof}

\subsection{Heuristic for Theorem~\ref{thm:regularity}}

Here is the reason that the bound in Theorem \ref{thm:regularity} is natural.  The affine measure $\mu_\M$ is defined as Lebesgue measure on a linear subspace of period coordinates.  Locally, we should be able to pick a basis for period coordinates that contains the short saddle connections.  The period of a saddle connection of length at most $\epsilon_i$ lies in a ball of area $\epsilon_i^2$ in $\C$.  So each independent $\epsilon_i$-short saddle connection should lead to an $\epsilon_i^2$ factor in the volume.

When we complete our set of short saddle connections to a full basis for functionals on the subspace given by $\M$, each new period will be either (i) bounded, or (ii) a cross curve of a cylinder of small circumference.  Dealing with type (i) periods is easy, since they just increase the implicit constant in the $O(\cdot)$.  Type (ii) periods are potentially unbounded, but they still lie in a region of bounded area once the circumference curve of the cylinder has been fixed.

There are two main issues in making the above heuristic into a proof.
\begin{enumerate}
\item We need to find a finite system of period coordinate charts to do the ``local'' computation; since $\H$ (and $\M$) are non-compact, some of these charts must necessarily also be non-compact.  Our method is to use the \emph{Moduli space of multi-scale differentials}, a nice compactification $\barPH$ of the (complex) projectivization $\P\H$ recently introduced by Bainbridge-Chen-Gendron-Grushevsky-M\"oller \cite{bcggm2019}.  We define our system of period coordinate charts in terms of neighborhoods of boundary points.  
\item We have to choose a nice basis of relative homology adapted to the short saddle connections.  
\end{enumerate}

\subsection{Outline of the proof of Theorem~\ref{thm:regularity}}
\begin{itemize}
\item In Section~\ref{sec:lms}, we give background on the multi-scale compactification $\barPH$, which is a compactification of the projectivized stratum $\P\H$; it plays a key role in the proof.   This space admits a nice system of analytic coordinates defined in terms of plumbing and scaling.  
\item In Section~\ref{sec:semianalytic-pieces}, for each boundary point $\bar X \in \barPH$, we find a finite system of semianalytic period coordinate charts that cover a neighborhood of $\bar X$ (minus the boundary).  They are cut out by conditions on the analytic coordinates.  A major challenge is showing that the period map on each such set is injective (which is needed for it to be chart).
\item In Section \ref{sec:intersect}, we use Filip's theorem on algebraicity of affine invariant manifolds to show that our $\M$ intersects each chart above in finitely many components.  This implies that in each chart, $\M$ is a union of finitely many linear subspaces.  
\item In Section~\ref{sec:vol-comp}, we prove the desired volume bound for $\M$ restricted to each chart.  This involves several lemmas that give estimates on periods of surfaces in each such chart.  
\item Finally, in Section~\ref{sec:proof}, we deduce the global volume bound for $\M$ from the bound in each chart using the compactness of $\barPH$ (and the fact that the volume bound is trivial in the compact part of $\H$).   
\end{itemize}

\subsection{Acknowledgements}

I am very grateful to Alex Wright for suggesting the problem and for helpful conversations and guidance.  I also would like to thank Matt Bainbridge and Sam Grushevsky for patiently explaining their work to me.   I thank Frederik Benirschke, Dawei Chen, Martin M\"oller, and Jenya Sapir for useful discussions.  

Finally, I gratefully acknowledge the support of the Fields Institute during the Thematic Program on Teichm\"uller Theory and its Connections to Geometry, Topology and Dynamics, where some of this work was done.

\section{Multi-scale compactification}
\label{sec:lms}

We will use the multi-scale compactification $\barPH$ introduced in \cite{bcggm2019} to define a finite system of period coordinate charts for $\H$.  The material in this section is a review of certain parts of \cite{bcggm2019}, though our notation is somewhat different, since certain technical aspects of their constructions are not relevant for us.    

Several other compactifications of strata have been considered, namely the ``What You See Is What You Get'' compactification studied by Mirzakhani-Wright \cite{mw2017}, and the Incidence Variety Compactification \cite{bcggm2018}.  The multi-scale compactification is best suited to our purposes since:
\begin{enumerate}[(i)]
\item it is ``large'' in the sense that there is a lot of data associated to each boundary point, so the types of surfaces that lie near a particular boundary point are more restricted, and 
\item it has nice analytic properties - in particular it is a smooth complex orbifold with explicit analytic coordinates defined in terms of plumbing and scaling.  
\end{enumerate}

The boundary points of the multi-scale compactification are nodal Riemann surfaces, where each component is equipped with a meromorphic differential, together with additional combinatorial data.  Components with poles of order $2$ or greater represent subsurfaces that are approaching zero size; the differential on such a component comes from appropriately rescaling the differential so that it converges to something not identically zero.  Poles of order $-1$ come in pairs and each pair represents a degenerating cylinder, i.e. one whose circumference is much smaller than any curve crossing it.

Because the compactification remembers the shape of subsurfaces that are vanishing, one can choose a basis of homology consistently across all the surfaces in a small neighborhood of a boundary point (actually, the neighborhood must be divided into finitely many pieces first).  This is done in Section~\ref{sec:vol-comp}, and it is a crucial part of the volume computation.  

\subsection{Definition of multi-scale compactification $\barPH$}

We recall the definition and properties of the multi-scale compactification.  The complete definition is quite complicated, so we refer the reader to \cite{bcggm2019} for full details.

Let $\H(\mu)$, $\mu=(m_1,\ldots,m_n)$, be a stratum of holomorphic 1-forms with $n$ labeled zeroes whose vanishing orders given by the $m_i$.
\footnote{Theorem \ref{thm:regularity} also holds for strata in which the zeros are \emph{unlabeled}.  These strata are quotients of the labeled strata by an appropriate finite subgroup of the symmetric group, so they share the same qualitative properties.}
Since $\mu$ will be fixed, we will use the shorthand $\H = \H(\mu)$.   The space $\barPH$ will be formed by attaching to $\P\H$ certain \emph{multi-scale differentials} $\bar X$ which consist of stable Riemann surfaces with meromorphic differentials on the components, plus some combinatorial data, up to a certain equivalence relation.

Our primary interest is in $\H$, but the natural object to compactify in a complex analytic way is the complex projectivization $\P \H$.  So we work with this space, and then translate statements back to $\H$.  

Let $\barM_{g,n}$ denote the Deligne-Mumford compactification of the moduli space of genus $g$ curves with $n$ marked points.  An element $M\in \barM_{g,n}$ is a stable, nodal Riemann surface.  Associated to $M$ is the \emph{dual graph} $\Gamma_M$: the  vertices $v$ correspond to the components $M_v$ of $M$, and for each node of the Riemann surface there is an edge connecting the vertices corresponding to the two components (possibly the same) joined by the node.

\begin{figure}[]
\begin{center}
  \includegraphics[scale=0.73]{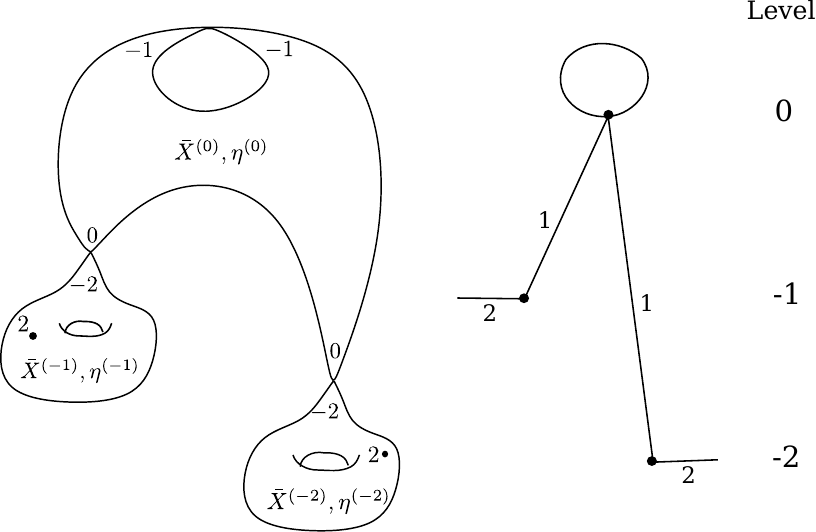}
  \caption{Left - A stable Riemann surface with a twisted differential.  This comes from a degeneration of surfaces in the stratum $\H(2,2)$.  The numbers at the nodes indicate the order of the differential, while the two dots correspond to the two zeros that surfaces in $\H(2,2)$ have.  Right - The corresponding enhanced level graph.}
  \label{fig:surfaceH22}
\end{center}
\end{figure}

A multi-scale differential will come with the following additional structure on the dual graph; it is related to the differentials on the components:
\begin{enumerate}
\item \emph{Half-edges}, each having a single vertex as endpoint.  There will be one of these for each of the $n$ integers in the type $\mu$ that specifies the stratum $\H=\H(\mu)$.
  The purpose of these is to record on which components the zeros of surfaces in $\H$ end up in the limit.   Each is labeled by a positive integer which is the order of the corresponding zero.  
\item A (surjective) \emph{level function}
  $$\ell: V(\Gamma) \to \underline{N} := \{0,-1,\ldots,-N\},$$
  which determines a weak order on the vertices.
  This ordering will determine which of two given components on smooth translation surfaces near the multi-scale differential is larger.
  Edges that join components at the same level will be called \emph{horizontal edges}, while those that join components at different levels will be called \emph{vertical edges}.  
\item An assignment of a positive integer $b_e$ to each vertical edge.  The cone angle around the node will be the same on either side of the node and will equal $2\pi b_e$.  
\end{enumerate}

A dual graph equipped with all this structure will be called an \emph{enhanced level graph of type $\mu$}.  

\paragraph*{Twisted differentials.}
A \emph{twisted differential} on $M$ \emph{compatible} with the enhanced level graph $\Gamma$ is a collection of meromorphic differentials $\eta= \{\eta_v\}$, one for each irreducible component $M_v$ of $M$, that is consistent with $\Gamma$ in a sense suggested above (see \cite{bcggm2019} for full definition).  The term ``twisted'' is meant to remind us that these are meromorphic differentials, and so are related to sections of twists of the canonical bundle.  Several consistency conditions ensure that the limit objects can be smoothed to surfaces in $\P\H$.  One condition is that at a vertical node, the cone angle of the differential above equals the cone angle of the differential below.  This is equivalent to the orders of the two differentials on either sides summing to $-2$.  A more subtle part of the definition of compatibility is the Global Residue Condition, which is not always forced by the residue theorem.

We will call the union of components at level $i$, i.e. those corresponding to the vertices of $\ell^{-1}(i)$, the \emph{level subsurface} at level $i$ and denote it $\bar X^{(i)}$.  We will denote the restriction of $\eta$ to $\bar X^{(i)}$ by $\eta^{(i)}$.   At times, by $\bar X^{(i)}$ we will actually mean the pair $(\bar X^{(i)}, \eta^{(i)})$.  

\paragraph*{Prong matching.}
The final piece of data is a \emph{prong-matching}.  This is only needed when there is at least one pole of order $k\le -3$.  At the other side of the corresponding node, there is a zero of order $-2-k\ge 1$.   The cone angle around each is $2\pi (-k-1)$.  At each such point, there is more than one ``prong'', i.e. a choice of horizontal direction.  In order to smooth the limit point, we must know how the prongs from the two differentials will be identified - this is what the prong-matching records.

\begin{defn}[Multi-scale differential]
  A \emph{multi-scale differential} $\bar X$ of type $\mu$ is the data $(M, \Gamma, \eta, \sigma)$ where $M$ is a stable Riemann surface, $\Gamma$ is an enhanced level structure of type $\mu$ on the dual graph of $M$, $\eta$ is a twisted differential on $M$ compatible with $\Gamma$, and $\sigma$ is a prong-matching.  
\end{defn}

\paragraph*{Level rotation torus.}
Two multi-scale differentials are considered equivalent if they differ by the action of the \emph{level rotation torus}.  This action comes from simultaneously rescaling all the $\eta_v$ at the same level by some element of $\C^*$.  However, moving around a circle in $\C^*$ may change the prong-matching at nodes at that level.  This subtlety means that instead of directly taking an action of copies of $\C^*$, we instead consider the universal cover $\C$ of each copy.   Keeping track of the prong-matching is needed to get a smooth orbifold, but its presence does not play a major role in our proof.   (Note that we include rescaling of the top level in the level rotation torus, since we wish to directly define a compactification of the projectivized stratum; this is one of the spaces \cite{bcggm2019} work with, but they also consider a version that does not involve rescaling of the top level.)

\begin{defn}
  The (projectivized) \emph{Moduli space of multi-scale differentials} $\barPH(\mu)$ is the set of multi-scale differentials of type $\mu$, modulo the equivalence relation described above.  
\end{defn}

The following is part of \cite [Theorem 1.2]{bcggm2019}.  
\begin{thm}
  The set $\barPH$ has a natural structure of a compact complex orbifold.
  \label{thm:compact}
\end{thm}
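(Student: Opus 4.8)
The statement is quoted verbatim from \cite[Theorem 1.2]{bcggm2019}, so strictly speaking the ``proof'' is to invoke that reference; nonetheless, here is the strategy I would follow to establish it. The plan is to build $\barPH$ by hand: first as a set (multi-scale differentials modulo the level rotation torus), then to topologize it using degenerating families, and finally to exhibit explicit holomorphic local charts near every boundary point, checking that transition maps are compatible up to finite group actions, and separately to prove sequential compactness.

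First I would fix a boundary point $\bar X = (M,\Gamma,\eta,\sigma)$ whose enhanced level graph $\Gamma$ has levels $0,-1,\ldots,-N$, and construct a local model around it. On each level subsurface $\bar X^{(i)}$ one has a generalized-stratum family of twisted differentials with prescribed zero/pole orders at the marked points and nodes; this contributes a polydisc of ``horizontal'' period-type parameters. To these one adjoins $N$ scaling parameters $s_{-1},\ldots,s_{-N}\in\C$, one per level passage, whose exponentials record the relative sizes of consecutive levels, together with one plumbing parameter $t_e$ for each horizontal edge. The smoothing parameter attached to a vertical edge $e$ is then a prescribed monomial in the $s_j$ (with exponent $b_e$ and the combinatorics of $\Gamma$), so the boundary is normal crossings in these coordinates. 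Running the plumbing-and-scaling construction on this data produces a family of pointed differentials; the nontrivial point is that the family lands in $\P\H$, and this is exactly where the Global Residue Condition in the definition of a compatible twisted differential is used — it is precisely the obstruction to the formal plumbing being realizable.

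The core analytic work, which I expect to be the main obstacle, is showing that this model map is a local homeomorphism onto a neighborhood of $\bar X$ in $\barPH$. This splits into (a) surjectivity, i.e. every smooth surface sufficiently near $\bar X$ arises from some choice of parameters, which requires a degeneration argument controlling how a nearly-degenerate surface in $\P\H$ decomposes into level subsurfaces of comparable sizes; and (b) injectivity, which forces one to quotient by the finite automorphism group $\operatorname{Aut}(\bar X)$ and — more subtly — to replace each $\C^*$-factor of the level rotation torus by its universal cover $\C$, because encircling $s_j=0$ permutes the prong-matchings at the level-$j$ vertical nodes. Tracking the prong-matching $\sigma$ is precisely what upgrades the quotient from a normal-crossings variety to a \emph{smooth} orbifold. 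Holomorphicity of the transition maps between two such charts then follows because both are built from the same plumbing recipe, and compatibility on the open part $\P\H$ is just the usual period-coordinate atlas.

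Finally, compactness I would obtain by combining Deligne--Mumford compactness with a rescaling argument. Given a sequence $X_n\in\P\H$, pass to a subsequence so that the underlying stable curves converge in $\barM_{g,n}$ to some $M$ with dual graph $\Gamma$; on each component of $M$ rescale the restricted differentials by the appropriate scalars so that they converge to a nonzero meromorphic limit, grouping components into levels according to their comparative rates of decay. One then checks that the limiting collection $\{\eta_v\}$ is a twisted differential compatible with an enhanced level structure on $\Gamma$ — the cone-angle matching at vertical nodes and the Global Residue Condition both pass to the limit — extracts a limiting prong-matching after a further subsequence (the prong-matchings lie in a finite set once the combinatorics are fixed), and concludes that $[X_n]$ converges in $\barPH$. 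Sequential compactness plus metrizability then gives compactness.
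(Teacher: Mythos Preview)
Your proposal is correct and matches the paper's approach exactly: the paper does not prove this theorem at all but simply states it as ``part of \cite[Theorem 1.2]{bcggm2019}'' and cites that reference. You correctly identify this in your first sentence, and your subsequent sketch of the BCGGM strategy (local charts via moduli plus smoothing parameters, the role of prong-matchings in smoothness, and compactness via Deligne--Mumford plus rescaling into levels) is a faithful outline of how that reference proceeds, consistent with the summary the paper itself gives in Section~\ref{sec:coords}.
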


\subsection{Coordinates for $\barPH$}
\label{sec:coords}
From the way it is constructed in \cite{bcggm2019}, $\barPH$ admits a nice system of complex-analytic coordinates (in the orbifold sense).  We now describe these near a boundary point $\bar X$.  There are two types of coordinates:
\begin{enumerate}[(i)]
  \item \emph{Moduli parameters} $s_i$.  Changing these moves the surface parallel to the boundary stratum.
\item \emph{Smoothing parameters}. Deforming a smoothing parameter away from $0$ will smooth certain nodes of $\bar X$. These parameters fall into two categories:
  \begin{enumerate}
  \item a \emph{scaling parameter} $t_{i}$ for each level $i$ below the top level (which is level $0$), and 
  \item a \emph{horizontal node parameter} $t$ for each horizontal node.  
      \end{enumerate}
\end{enumerate}

We will describe how to construct a surface $X$ specified by moduli and smoothing parameters.  
There are several choices that need to be made to define the coordinates; we will discuss these along the way, but they should really be made at the beginning of the construction.  We will describe plumbing applied to single surface, but to get a complex orbifold, it is crucial that this construction can be done in a holomorphically varying way for holomorphic families.  This is subtle; it is dealt with carefully in \cite{bcggm2019}.

When all the smoothing parameters are non-zero, this will produce a particular \emph{distinguished}  surface $X\in \H$, not just a projective equivalence class in $\P\H$.  In some of the lemmas we will use this as a reference surface to discuss sizes.  

\subsubsection{Moduli parameters}
\label{sec:moduli}

We first describe the role of the moduli parameters $s_i$.  Note that $\bar X$ lies in a \emph{boundary stratum} consisting of twisted differentials in $\barPH$ that share the same combinatorial data $\Gamma,\sigma$ (the enhanced level graph and prong-matching). 
This boundary stratum can be identified with a suborbifold of a certain stratum of meromorphic differentials cut out by certain conditions on residues coming from the Global Residue Condition and the requirement that residues match at pairs of simple poles.  
This space naturally has a complex structure, and thus we can take a system of complex-analytic coordinates for (the projectivization of) the boundary stratum near $\bar X$.

A good choice is to use period coordinates.  Strata of meromorphic differentials admit period coordinates given by the cohomology group $H^1(Y\backslash P, Z;\C)$ of the surface $Y$ minus the poles $P$, relative to the set $Z$ of zeros and marked points.  Note that for us $Y$ will consist of the normalization of the possibly nodal Riemann surface $M$, so it will often have many components.  Since the residues are given by periods of cycles that encircle punctures, all the residue conditions are linear in period coordinates.  To account for the quotient by the action of the level rotation torus, for each level we take the projectivization of the relevant subspace of period coordinates for the meromorphic stratum corresponding to that level.

To get actual local coordinates, we will normalize by taking a fixed relative cycle $\gamma$ at each level to have period $1$.  The existence of such a cycle that has non-zero period at $\bar X$ is guaranteed by the stability condition (otherwise, integrating the $1$-form would give a branched covering to $\hat \C$, but the number of preimages of $\infty$, counted with multiplicity, would be too small).
Normalizing in this way we get local coordinates for the boundary strata which will be the moduli parameters $s_i$.  We will sometimes translate to arrange that all $s_i$ are $0$ at $\bar X$.  

\subsubsection{Smoothing parameters}
\label{sec:smoothing}

We now move on to the smoothing parameters, which are more complicated. 

To start off, we must pick a particular multi-scale differential $(M,\Gamma,\eta,\sigma)$ in the equivalence class of $\bar X$ (recall that $\barPH$ is the set of multi-scale differentials modulo the action of the level rotation torus).  In fact, we must pick such a representative for every surface in a small neighborhood of the boundary stratum of $\bar X$, and further, we do this in such a way that the differentials $\eta_v$ that compose $\eta$ vary holomorphically.   In other words, we are choosing a distinguished holomorphic section of the quotient (by level rotation torus) map locally for each level.  
To get this section, we use the normalization used above in Section \ref{sec:moduli}, defined by choosing a particular relative cycle at each level to have period $1$.  This gives surfaces $(M,\eta)$ compatible with $\Gamma$.  We then pick an appropriate prong matching $\sigma$ so that each $(M,\Gamma,\eta,\sigma)$ lies in the equivalence class of the desired boundary point in $\barPH$.

\paragraph{Standard coordinates near a node.}
\label{par:standard}
To perform plumbing, we find \emph{standard coordinates} on $\bar X$ near its nodes.  For each node we can find complex-analytic coordinates on the Riemann surface near that node such that the differential has the form
\begin{align}
      \label{eq:standard}
  \begin{cases}
    z^k dz & \text{if } k\ge 0\\
    \frac{r}{z} dz & \text{if } k=-1 \\
    \left(z^k + \frac{r}{z} \right) dz & \text{if } k\le -2,
  \end{cases}
\end{align}
where $k$ is the order of vanishing of the differential at the node. Here $r$ is the residue at the pole in the case $k\le -1$.  
The existence of these coordinates was proved by Strebel \cite{strebel1984} (see also \cite[Theorem 4.1]{bcggm2019}, where they prove that standard coordinates are essentially unique and vary holomorphically as we vary $\bar X$).  When $k\ge 0$, the coordinates are unique up to multiplication by a $(k+1)$-st root unity.  When $k=-1$, we can rescale by any complex number.  When $k\le -2$, the coordinates are unique after specifying a fixed base point $p$ near the node to have some fixed coordinate (which we will often take to be $1$).

\paragraph{Scaling parameters.}
\label{sec:plumb}
We will first discuss the role of the scaling parameters $t_i$.  Such a parameter determines both the size of the level $i$ subsurface relative to the level $i+1$ subsurface and the way that the level $i$ subsurface is plumbed into the higher levels.

\begin{figure}[]
\begin{center}
  \includegraphics[scale=0.8]{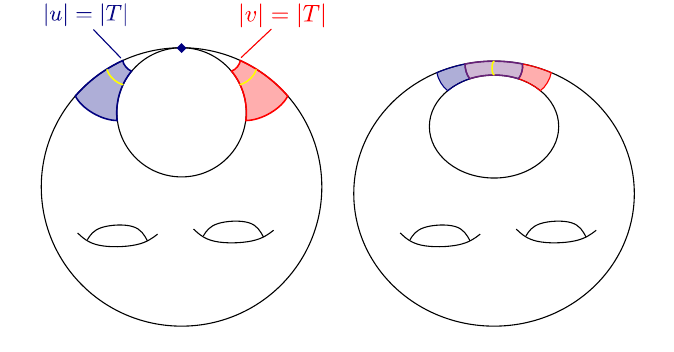}
  \caption{Classical plumbing applied to a Riemann surface with a single node. (Modification of a figure created by Jenya Sapir; used with permission.)}
  \label{fig:classical_plumbing}
\end{center}
\end{figure}

\subparagraph{Poles of order $2$, no residues.} We will first describe the simpler case in which all poles of $\bar X$ are of order $2$, and all residues at poles are zero (at $\bar X$ as well as at all nearby surfaces).  Consider two level subsurfaces $\bar X^{(i)}$ and $\bar X^{(j)}$ of $\bar X$, with $i>j$, and suppose that $u$ and $v$ are  standard coordinates near two points of $\bar X^{(i)}, \bar X^{(j)}$, respectively, that are joined together in a node (we will assume that the discs $\{u: |u|\le 1\}$ and $\{v: |v|\le 1\}$ are contained in the coordinate charts; otherwise we should remove a somewhat larger disc in the next paragraph before doing the plumbing).  The orders of vanishing at the two points that get joined will sum to $-2$.  Let
$$T=t_{i-1} t_{i-2}\cdots t_{j}.$$
To smooth the node, at the level of Riemann surfaces, we perform \emph{classical plumbing}, as in Figure \ref{fig:classical_plumbing}.  That is, we first remove the small discs $\{u: |u|\le |T|\}$, $\{v: |v| \le |T|\}$ from $\bar X^{(i)}, \bar X^{(j)}$, respectively.  For remaining points in the sets parameterized by $u,v$, we glue $u$ to $v$ whenever
$$uv=T.$$

For the differential on $\bar X^{(i)}$, we take
$$t_{-1}t_{-2}\cdots t_i \cdot \eta^{(i)},$$
and on $\bar X^{(j)}$, we take
$$t_{-1}t_{-2}\cdots t_j \cdot \eta^{(j)}.$$

Next we choose standard coordinates \eqref{eq:standard} near the nodes (actually, it is crucial that this choice be made at the very beginning of the construction; see discussion at beginning of Section \ref{sec:coords}).  We need these coordinates to vary holomorphically as we vary the boundary point by a small amount.  We get this, provided that we choose in a holomorphic manner the fixed base point $p$ on the nearby boundary surfaces used to normalize the coordinates.  %
See \cite[Section 10]{bcggm2019} for details.

In these coordinates, on the upper component $\bar X^{(i)}$ the differential has the form 
$$t_{-1}t_{-2}\cdots t_i \cdot du,$$
and on the lower component $\bar X^{(j)}$ it has the form
$$t_{-1}t_{-2}\cdots t_j \cdot \frac{dv}{v^2}.$$
The following calculation shows that the upper and lower differentials agree at points that are glued together:
\begin{align*}
  t_{-1}t_{-2}\cdots t_i  \cdot du &= t_{-1}t_{-2}\cdots t_i \cdot  d(T/v)  \\
                                   &=  -t_{-1}t_{-2}\cdots t_i \cdot t_{i-1} t_{i-2}\cdots t_{j} \cdot \frac{dv}{v^2}.
\end{align*}
(Because of the factor of $-1$, on the lower component we modify the coordinates so that the differential is the negative of the standard form (\ref{eq:standard}) above).  Hence in this case we get a well-defined differential on the plumbed surface.

\subparagraph{Poles of order 2 or higher, no residues.} We now generalize to higher order poles, still assuming that all residues are zero (at $\bar X$ as well as at all nearby surfaces).  For each level $i$, we assign an integer $a_i$, taken to be the least common multiple of the $b_k$ associated to all nodes that join a component at level greater than $i$ to a component at level $i$ or smaller, where $2\pi b_k$ is the cone angle around the node.   

For the differential on $\bar X^{(i)}$, we take
$$t_{-1}^{a_{-1}} \cdots t_i^{a_i} \cdot \eta^{(i)},$$
and on $\bar X^{(j)}$, we take
$$t_{-1}^{a_{-1}}\cdots t_j^{a_j} \cdot \eta^{(j)}.$$

At a node with cone angle $2\pi b$, we choose holomorphically varying standard coordinates \eqref{eq:standard} (this choice should actually made at the very beginning of the construction).  %
In these standard coordinates near the node, the differential on the upper component $\bar X^{(i)}$ has the form
$$t_{-1}^{a_{-1}}\cdots t_i^{a_i} \cdot u^{b-1} du,$$
and near the node on the lower component $\bar X^{(j)}$, the differential has the form
$$t_{-1}^{a_{-1}}\cdots t_j^{a_j} \cdot \frac{dv}{v^{b+1}}.$$
We perform plumbing as in the previous case, gluing together points $u,v$ whenever $uv=T$, but now we use
$$T=t_{i-1}^{a_{i-1}/b}\cdots t_{j}^{a_{j}/b}.$$

A short calculation as in the previous case shows that the upper and lower differentials agree (up to factor $-1$) at points that are glued together.

\subparagraph{Modification differentials to account for residues.}
In the case when the residue $r$ (at a node on the lower level subsurface $\bar X^{(j)}$) is non-zero, we have to work to create a residue on the upper level subsurface $\bar X^{(i)}$ so that the differentials will match when we do the plumbing.  The solution is to create a \emph{modification differential} $\xi$ on the underlying Riemann surface of $\bar X^{(i)}$ that has a simple pole at the node with residue $r$.  The existence of holomorphically varying modification differentials with the required properties is furnished by \cite[Proposition 9.3]{bcggm2019} (or \cite[Lemma 4.6]{bcggm2018}).  However, the modification differential will not be uniquely specified, so we must make a choice of one at the beginning of the construction of coordinates.   In our discussion here, we will focus on the modification differential that comes from residues on $\bar X^{(j)}$.  In general there will be several modification differentials coming from residues at different levels and these should all be added to the higher level differential.

As in the case with no residues, on a neighborhood of each node, we can choose a coordinate $u$ such that the differential on $\bar X^{(i)}$ has the form $u^{b-1}du$.  Adding the scaled modification differential gives us the differential
\begin{align}
  u^{b-1}du+t_{i-1}^{a_{i-1}}\cdots t_{j}^{a_{j}}\xi.
  \label{eq:mod-diff}
\end{align}
Now \cite[Theorem 4.2 and Theorem 4.3]{bcggm2019} (or \cite[Theorem 4.3]{bcggm2018}) gives that by performing a change of coordinates (depending holomorphically on the $t_k$, as well as on the moduli parameters) on a fixed \emph{annulus} near $0$, we can assume that this differential has the form
\begin{align}
  \label{eq:stand_form}
\left(u^{b-1}+t_{i-1}^{a_{i-1}}\cdots t_{j}^{a_{j}} \frac{r}{u}\right) du.
\end{align}
Because the order of vanishing of \eqref{eq:mod-diff} is not constant (the simple pole disappears when all $t_k$ are zero), it is not possible to extend these coordinates to a full neighborhood of $0$.  Due to this issue, we do not define the plumbing directly on $\bar X^{(i)}$; instead we define a local family of \emph{plumbing fixtures}, and then glue the ends of these to the top and bottom pieces to realize the plumbing.   

Consider a disc $U$ with a differential on it given by the formula \eqref{eq:stand_form}.  We then rescale this differential on $U$ so that it has the form
$$t_{-1}^{a_{-1}}\cdots t_i^{a_i} \left(u^{b-1}+t_{i-1}^{a_{i-1}}\cdots t_{j}^{a_{j}} \frac{r}{u}\right) du.$$

Near the node on $\bar X^{(j)}$, in standard coordinates \eqref{eq:standard}, and after rescaling, the differential has the form
$$t_{-1}^{a_{-1}}\cdots t_j^{a_j}\left(\frac{1}{v^{b+1}} + \frac{r}{v}\right) dv.$$
We define another disc $V$ with a differential given by the above expression.

We then glue together those points $u\in U$ and $v\in V$ with $uv=T=t_{i-1}^{a_{i-1}/b}\cdots t_{j}^{a_{j}/b}$, as in the case without residues.  A short calculation similar to the one in the previous case shows that the two differentials above match up at the points glued together.  This is the local plumbing fixture (it is topologically an annulus).

By construction, on the plumbing fixture, one annular end matches the differential on the fixed annulus on $\bar X^{(i)}$, and the other annular end matches the differential on an annulus on $\bar X^{(j)}$.   Hence, after removing the appropriate discs on $\bar X^{(i)}$ and $\bar X^{(j)}$, and then gluing in the plumbing fixture, we get a plumbed surface with differential.

By the \emph{plumbing region} we will mean the part of the smooth surface constructed above that corresponds to the plumbing fixture.

\subparagraph{Merging zeros.}
The modification differential solves the residue matching issue but introduces a new problem: it may split up some of the higher order zeros of the original differential.  So afterwards, we must \emph{merge the zeros} that have been separated.  This amounts to cutting out a small disc containing the zeros, and then gluing in another disk equipped with a form that has a single zero.  For details see \cite[Theorem 4.2]{bcggm2019} (or \cite[Lemma 4.7]{bcggm2018}). 

\paragraph{Horizontal node (degenerating cylinder) parameters.}
The case when we have a node joining two points that lie on the same level subsurface $\bar X^{(i)}$ is similar to the higher order pole case, but we don't have to worry about modification differentials.  On either side of the node are simple poles.  On nearby surfaces, this node will become a degenerating cylinder.  We find standard coordinates $u,v$ given by \eqref{eq:standard} on each side of the node.  The residues at the two simple poles are forced to be negatives of one another, by the definition of multi-scale differential.   So in these coordinates, on either side of the two nodes the differential has the form
$$t_{-1}^{a_{-1}}\cdots t_i^{a_i} \cdot \frac{r}{u} du,$$
and
$$t_{-1}^{a_{-1}}\cdots t_i^{a_i} \cdot \frac{-r}{v}dv,$$
respectively.  
On the level of Riemann surfaces, we do classical plumbing as above, gluing $u$ to $v$ whenever $uv=t$, where $t$ is the horizontal node parameter.  A short calculation very similar to the ones done in the case of higher order poles gives that the two differentials match up at points that are glued together.  Hence we get a well-defined differential on the plumbed surface.

\section{Semianalytic period coordinate charts}
\label{sec:semianalytic-pieces}

In this section we will work with a fixed boundary point $\bar X \in \barPH$.

We let
$$p:\H \to \barPH$$
be the natural projectivization map composed with the inclusion into the compactification (taking this as the target allows us to pull back neighborhoods of a boundary point). For a subset $S\subset \H$, we define $\P S := p(S)$.

Our goal is to define a finite system of semianalytic period coordinate charts $\{V_k\}$ in $\H$, each invariant under $\C^*$-scaling, and such that $\{\P V_k\}$ cover $U\cap \P\H$ for some neighborhood $U$ of $\bar X$ in $\barPH$.  In each such period coordinate chart, an affine invariant manifold $\M$ will be given by a finite union of linear subspaces.  Hence, equipped with this system of finitely many period coordinate charts, we can understand $\M$ in terms of finitely many linear spaces.    

Each $\P V_k$ will be ``multi-sector'' i.e. a product of small sectors with respect to analytic coordinates near $\bar X$.  

\subsection{Defining the period coordinate charts $V_k$}
\label{sec:vk}
Choose a system of analytic coordinates near the boundary point $\bar X$, as in Section~\ref{sec:coords}, where $t$ are the smoothing parameters, and $s$ are the moduli parameters (translated so that the value is $0$ at the $\bar X$).  Each $\P V_k$ (which depend on this choice of coordinates) will then be defined by conditions as follows:

\begin{enumerate}[(i)]
\item  \label{item:epss}  Restrict $0<|s|<\epsilon$ for each moduli parameter $s$, for $\epsilon>0$ chosen sufficiently small so that these conditions cut out an open set in the boundary stratum on which period coordinates are well-defined and injective.  This is possible since periods give local coordinates for any meromorphic stratum, and the boundary stratum is locally a linear subspace of a meromorphic stratum. (In the proof of Lemma \ref{lemma:period-chart}, we will use arguments for which $\epsilon$ will need to be chosen yet smaller.) 

\item \label{item:epst} Restrict $0<|t|<\epsilon$ for each smoothing parameter $t$.
  
\item \label{item:horiz}  If $t$ is a horizontal node parameter, then we consider a restriction of the form
  $$\arg t \in (\alpha, \alpha+\pi/4).   $$
  We choose finitely many $\alpha$ so that the union of the intervals above for these $\alpha$ cover the full circle of directions.  Each $\P V_k$ will correspond to an interval given by one choice of $\alpha$.   (We need such conditions to ensure that $V_k$ admits an injective period map; see Example \ref{ex:noninj}.)  

\item \label{item:vert}  Suppose $t_i$ is the scaling parameter for level $i$.  We have defined an associated integer $a_i$ in Section \ref{sec:plumb} related to higher order poles.  We can find connected interval conditions on the $\arg t_i$ that imply that $\arg t_i^{a_i}$  satisfies a condition of the form in  (\ref{item:horiz}) above.  With finitely many such interval conditions, we cover all possibilites for $t_i$.  Each $\P V_k$ will correspond to one of these interval conditions for each such $t_i$. 
\end{enumerate}

We then take $V_k := p^{-1}(\P V_k)$.  
\begin{rmk}
  In the above we were assuming that $\bar X$ was not an orbifold point.  If it is an orbifold point, we define sets in the local manifold cover as above, and then get the $\P V_k$ by pushing down to $\barPH$.  
\end{rmk}

\begin{example}
  Consider the example of a point $\bar X$ in $\overline{\P\H(3,1)}$ which has two levels connected by a single node, where the lower level piece is genus $1$, with one pole of order $3$ and one zero of order $3$, and the upper level piece is genus 2 with two zeroes of order $1$, one of which lies at the node.   See Figure \ref{fig:surfaceH13}.

  There are moduli parameters $s_1,\ldots,s_n$ that vary the bottom the top and bottom pieces within their strata.  There is one scaling parameter $t_{-1}$.  We have that $a_{-1}=2$, since there is only one pole, and its order is $3$ (recall that $a_{-1}$ is the least common multiple over all the poles of the associated integer $b_k$, which is defined so that the cone angle at that pole is $2\pi b_k$).  For $k=0,\ldots, 15$, define intervals:
  \begin{align}
    S_{2k+1} &= \left(k\frac{\pi}{8}, (k+1) \frac{\pi}{8}\right), \\
    S_{2k+2} &= \left(0.1 + k\frac{\pi}{8}, 0.1 + (k+1) \frac{\pi}{8}\right). 
  \end{align}
  The sets  $V_k$ in the definition can then be taken to be
  $$V_k = \{ X : |s_1|,\ldots, |s_j| < \epsilon, \text{ } |t_{-1}| < \epsilon, \text{ } \arg t_{-1} \in S_k \},$$
  for $k=1,\ldots,32$.  
\end{example}

\begin{figure}[h]
  \begin{center}
    \begin{subfigure}[b]{0.49\textwidth}
      \begin{center}
      \includegraphics[scale=0.6]{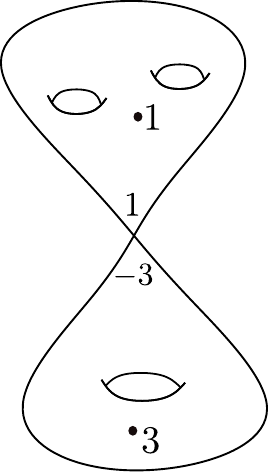}
      \caption{Surface in $\overline{\P\H(3,1)}$} 
      \label{fig:surfaceH13}
    \end{center}
    \end{subfigure}
    \begin{subfigure}[b]{0.49\textwidth}
      \begin{center}
      \includegraphics[scale=0.6]{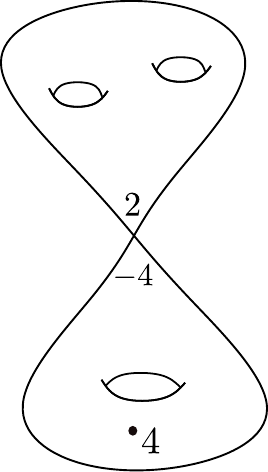}
      \caption{Surface in $\overline{\P\H(4)}$}
      \label{fig:surfaceH4}             
      \end{center}
    \end{subfigure}
    \caption{Boundary surfaces with higher order poles.}
  \end{center}
\end{figure}

\subsection{Properties of $V$}
In a real-analytic manifold, a \emph{semianalytic set} is a subset that is locally cut out by real-analytic equalities and inequalities.  In a real-analytic orbifold, a semianalytic set is a subset such that its preimage under some real-analytic orbifold chart is a semianalytic subset of $\mathbb{R}^n$.   See \cite{kankaanrinta2011}  for a complete definition.   

\begin{lemma}
  \label{lemma:semianalytic}
Each $\P V_k$ is simply connected and semianalytic, and
$$\bigcup_k \P V_k= U \cap \P\H,$$
where $U$ is the neighborhood of $\bar X$ given by restricting $|s|<\epsilon$ for each moduli parameter, and $|t|<\epsilon$ for each smoothing parameter.  
\end{lemma}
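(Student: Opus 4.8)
The plan is to verify the three assertions of Lemma~\ref{lemma:semianalytic} in turn: that each $\P V_k$ is semianalytic, that each $\P V_k$ is simply connected, and that the $\P V_k$ cover $U \cap \P\H$.

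\textbf{Semianalyticity.} The coordinates $(s,t)$ are complex-analytic orbifold coordinates on a neighborhood of $\bar X$ in $\barPH$ by Theorem~\ref{thm:compact} and the construction recalled in Section~\ref{sec:coords}; viewing them as real-analytic coordinates $\mathbb{R}^{2m} \to \barPH$, I would observe that each of the defining conditions is real-analytic or real-semianalytic. Indeed, $|s| < \epsilon$ and $0 < |t| < \epsilon$ are (strict) polynomial inequalities in the real and imaginary parts; and the argument conditions $\arg t \in (\alpha, \alpha+\pi/4)$ in (\ref{item:horiz}) — and likewise the connected interval conditions on $\arg t_i$ in (\ref{item:vert}) — cut out an open sector, which on the locus $t \neq 0$ is semianalytic since within a half-plane it is equivalent to the vanishing of a sign condition on a real-analytic function (e.g. $\Im(e^{-i\alpha} t) > 0$ and $\Im(e^{-i(\alpha + \pi/4)}t) < 0$, which pins down the sector of width $\pi/4$). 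A finite intersection of semianalytic sets is semianalytic, so $\P V_k$ is semianalytic; since $\P\H$ is itself the open locus where all smoothing parameters are nonzero, $\P V_k \subset \P\H$ is a semianalytic subset of the orbifold $\barPH$. At an orbifold point one argues in the local manifold cover as in the Remark and pushes down.

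\textbf{Simple connectedness.} I would show each $\P V_k$ is homeomorphic to a product of contractible pieces. The $s$-constraints carve out $\{0 < |s| < \epsilon\}$ for each moduli parameter — but by the choice of $\epsilon$ in (\ref{item:epss}), on this set period coordinates are well-defined and injective, so the boundary stratum locus is identified with an open subset of a linear subspace; more simply, each factor $\{0<|s|<\epsilon\}$ is a punctured disk, which is \emph{not} simply connected, so I instead note that the full set of $s$ is constrained only so that period coordinates are a valid chart, and the actual topology that matters is that cut out jointly. The cleaner approach: each smoothing parameter factor is, after the argument restriction, a sector $\{0<|t|<\epsilon,\ \arg t \in (\alpha,\alpha+\pi/4)\}$ (or the preimage of such under $t_i \mapsto t_i^{a_i}$, which on a connected interval of $\arg t_i$ is again a sector), and a sector in $\mathbb{C}$ is convex, hence contractible. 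For the moduli parameters, by shrinking $\epsilon$ if necessary one works inside a single coordinate ball so that the $s$-locus is a ball intersected with a linear subspace, which is convex. Then $\P V_k$ is a product of convex sets, hence contractible, hence simply connected. (If a punctured-disk factor genuinely appears, one adds to the definition of $V_k$ an argument restriction on $s$ as well, exactly as for $t$; I would flag this.)

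\textbf{Covering.} This is the main obstacle, and it is really just a bookkeeping verification: I must check that every point of $U \cap \P\H$ lies in some $\P V_k$. A point of $U \cap \P\H$ has, in the chosen coordinates, all $|s| < \epsilon$ and all smoothing parameters nonzero with $|t| < \epsilon$; the condition defining $\P\H$ (as opposed to the boundary) forces $t \neq 0$, hence also $s \neq 0$ on the moduli parameters carrying period-coordinate information, or else one is still on the boundary stratum — here I would be careful: $U \cap \P\H$ is defined by $|s|<\epsilon$, $0 < |t| < \epsilon$. Given such a point, each horizontal-node parameter $t$ has some $\arg t$, which lies in one of the finitely many length-$\pi/4$ intervals chosen in (\ref{item:horiz}) (they cover the circle by construction); each scaling parameter $t_i$ has $\arg t_i$ lying in one of the finitely many connected intervals chosen in (\ref{item:vert}) (chosen precisely to cover all possibilities). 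Selecting, for each such parameter, an interval containing its argument picks out an index $k$ with the point in $\P V_k$. Hence $U \cap \P\H \subset \bigcup_k \P V_k$; the reverse inclusion is immediate since each $\P V_k$ is cut out by strengthening the conditions defining $U \cap \P\H$. The only subtlety is the $0 < |s|$ strictness versus the $|s| < \epsilon$ in $U$: points with some $s = 0$ but all $t \neq 0$ would lie in $U \cap \P\H$ but not in any $\P V_k$ as literally written; I would resolve this by either (a) noting such points form a positive-codimension analytic subset of measure zero, irrelevant for the volume computation that is the paper's goal, or (b) simply replacing $0 < |s| < \epsilon$ by $|s| < \epsilon$ in (\ref{item:epss}) wherever $s$ does not need to be nonzero, and adjusting Lemma~\ref{lemma:period-chart} accordingly. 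I would adopt (b) for cleanliness, so that the stated equality $\bigcup_k \P V_k = U \cap \P\H$ holds on the nose.
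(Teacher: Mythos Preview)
Your approach is essentially the same as the paper's: semianalyticity from expressing the defining conditions as real-analytic inequalities, simple connectedness from ``product of convex sets in $\C$ is convex, hence contractible,'' and the covering statement from the construction. The paper's proof is three short paragraphs doing exactly this, without the meandering.

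You are right to flag the $0<|s|<\epsilon$ issue. As literally written in item (\ref{item:epss}), the moduli-parameter factor is a punctured disc, which is neither convex nor simply connected, so the paper's one-line convexity argument does not apply. The paper's own proof asserts that $\P V_k$ is a product of convex subsets of $\C$, which only works if the condition is $|s|<\epsilon$; and indeed there is no reason to exclude $s=0$, since those are moduli-parameter values and the smooth stratum $\P\H$ is reached once all \emph{smoothing} parameters are nonzero. So your resolution (b) --- replacing $0<|s|<\epsilon$ by $|s|<\epsilon$ --- is the correct fix, and it is what the paper implicitly assumes in its own proof. With that adjustment your argument and the paper's coincide.
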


\begin{proof}
  Because of the conditions on angles, each $\P V_k$ is a product of convex subsets of $\C$, and is hence itself convex, hence contractible, and in particular simply connected. 

  The $\P V_k$ are semianalytic because each of the defining conditions can be expressed as an inequality on the real or imaginary part of a locally defined analytic coordinate function (the arg expression in  (\ref{item:horiz}) and (\ref{item:vert}) is not a well-defined analytic function, but equivalent conditions can be expressed in terms of ratios of real and imaginary parts).  

  The last claim about the union of the $\P V_k$ follows immediately from their definition.
  
\end{proof}

\begin{defn}[Period coordinate charts]
  \label{defn:period-chart}
  A connected, open subset $Q\subset \H$ is said to be a \emph{period coordinate chart} if it admits an injective map to $\C^n$ that is locally linear (the stratum locally has a linear structure coming from local period coordinates; near orbifold points one should work in an appropriate cover). 
\end{defn}

In particular, any sufficiently small neighborhood of a point $X\in \H$ will be a period coordinate chart.  However, it is \emph{not} necessarily true that for a small neighborhood $U$ of a boundary point $\bar X$ in $\barPH$, the set $p^{-1}(U)\cap \H$ is a period coordinate chart.  One issue is that the set might not be simply connected, in which case it is not possible to consistently choose a basis of relative homology to take periods of.

\begin{example}[Cautionary]
  \label{ex:noninj}
  Even on a simply connected subset of $p^{-1}(U)\cap\H$, the period coordinate map might not be injective.  To see this, consider a boundary point of the type shown in in Figure \ref{fig:surfaceH4}.  This has two levels, connected by a single node, with a pole of order $4$ below.  Let $X',X''$ be two surfaces near this boundary point whose coordinates are identical, except that for the scaling parameters $t'_{-1},t''_{-1}$, we have $t'_{-1}=\omega t''_{-1}$, where $\omega^3=1,\omega\ne 1$.   All periods of cycles coming from the top surface are the same for $X',X''$.  Since the bottom differentials are multiplied by $(t'_{-1})^3 = (t''_{-1})^3$, the periods of cycles from the bottom surface are also all equal.  Since we can form a basis out of such cycles that do not cross the nodes, we have that the all periods for $X',X''$ are equal.  On the other hand, the two surfaces must in fact be different, since they have different analytic coordinates (this also can be seen directly from the flat pictures).
\end{example}

Instead we use the sets $V$ defined in Section \ref{sec:vk}.

\begin{lemma}
  \label{lemma:period-chart}
  Each set $V =V_k \subset \H$ from Section \ref{sec:vk} is a period coordinate chart in the above sense, provided that the $\epsilon$ in the defintion of $V$ is sufficiently small (see Section \ref{sec:vk}, item (\ref{item:epss})).  
\end{lemma}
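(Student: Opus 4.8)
The plan is to show that the period coordinate map is well-defined and injective on each $V = V_k$. Since $\P V_k$ is simply connected (Lemma~\ref{lemma:semianalytic}), one can choose a consistent basis of $H_1(X, \Sigma; \Z)$ for all $X \in V$ by transporting a fixed basis from a reference surface (say the distinguished surface obtained when all smoothing parameters are nonzero, or a surface near $\bar X$ with all $t$ small). This gives a well-defined locally-linear period map $V \to \C^n$; the content of the lemma is \emph{injectivity}. The strategy is to pick a \emph{good} basis of relative cycles adapted to the level structure: cycles that either (a) lie entirely in a single level subsurface $\bar X^{(i)}$ and avoid all the nodes, or (b) cross a degenerating cylinder (horizontal node), or (c) cross a vertical node once. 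Because such cycles can be chosen to live away from the plumbing regions, their periods are expressible directly in terms of the data on the individual level differentials $\eta^{(i)}$ together with the scaling factors $t_{-1}^{a_{-1}} \cdots t_i^{a_i}$ and, for cross-cylinder cycles, the horizontal node parameters $t$.

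First I would make the reduction: show that recovering the analytic coordinates $(s, t)$ from the period vector suffices, since the analytic coordinates determine the point of $\barPH$ and hence (with the $\C^*$-scaling recorded, which the distinguished surface pins down) the point of $\H$. Second, I would recover the moduli parameters $s_i$: these are, by construction (Section~\ref{sec:moduli}), themselves periods of relative cycles on the level subsurfaces — using the normalization that a distinguished cycle $\gamma$ at each level has period $1$, the ratios of periods of level-$i$ cycles recover the projectivized period coordinates of $\bar X^{(i)}$, which \emph{are} the $s_i$, provided $\epsilon$ is small enough that period coordinates are injective on the relevant piece of the meromorphic stratum (this is exactly condition~(\ref{item:epss})). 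Third, I would recover the scaling parameters: fixing a level-$i$ cycle $\delta_i$ of type (a) that does not cross any node, its period on the plumbed surface equals $t_{-1}^{a_{-1}} \cdots t_i^{a_i}$ times a nonzero quantity determined by the already-recovered moduli, so one recovers the monomial $t_{-1}^{a_{-1}} \cdots t_i^{a_i}$; dividing consecutive ones recovers $t_i^{a_i}$, and then the angle condition~(\ref{item:vert}) — which confines $\arg t_i^{a_i}$ to an arc of length less than $2\pi$ — lets us extract $t_i$ itself (both modulus and argument) unambiguously. Finally, for each horizontal node parameter $t$, a cross-cylinder cycle has period of the form (recovered scaling monomial) $\times$ ($\log$-type contribution in $t$, or a simpler expression depending on the normalization in Section~\ref{sec:smoothing}); in the classical plumbing picture $uv = t$, the cross-curve period is essentially $\frac{r}{2\pi i}\log t$ plus bounded terms, so $t$ is recovered up to the ambiguity $t \mapsto t e^{2\pi i}$, which the arc condition~(\ref{item:horiz}) of length $\pi/4$ kills. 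Example~\ref{ex:noninj} shows precisely why conditions~(\ref{item:horiz}) and~(\ref{item:vert}) are needed: without them the $t_i \mapsto -t_i$ (or general root-of-unity) ambiguity is invisible to the periods.

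The main obstacle I expect is controlling the error terms in the period computations near the nodes and making the "period of a good cycle $\approx$ scaling monomial times known nonzero constant" statement precise and \emph{uniform} over $V$ — i.e. showing the constant stays bounded away from $0$ and $\infty$ as the parameters range over $V$, which is where one shrinks $\epsilon$ further. The plumbing construction glues in annuli of size $\sim |T| = |t_{i-1}^{a_{i-1}/b} \cdots t_j^{a_j/b}|$, and the differential on the plumbed surface differs from the naive rescaled $\eta^{(i)}$ by terms supported in these annuli and by the modification differentials accounting for residues; one must check these contributions are lower-order so that the leading behavior of each good period is genuinely the claimed monomial. A secondary subtlety is the merging-of-zeros step and the residue modification from Section~\ref{sec:smoothing}: the relative cycles must be chosen to avoid the regions where these local surgeries happen, or else one must verify those surgeries change periods only by controlled amounts. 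Once uniformity of the nonzero leading constants is in hand, injectivity follows by the successive-recovery argument above, with the angle conditions doing exactly the job of removing the finite ambiguities in taking roots.
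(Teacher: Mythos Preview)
Your strategy---recover the analytic coordinates $(s,t)$ from the periods, using the angle restrictions to kill root-of-unity and branch ambiguities---is the right one and matches the paper's approach in spirit. But there is a genuine gap. Your recovery order (all $s_i$ first, then all $t_i$) cannot work as stated: the period of a level-$\ell$ cycle on the plumbed surface is \emph{not} $t_{-1}^{a_{-1}}\cdots t_\ell^{a_\ell}$ times a quantity determined by the moduli alone, except at the bottom level $\ell=-N$. At higher levels, modification differentials (scaled by lower $t$'s) contribute, and---more seriously---when $\bar X^{(\ell)}$ carries no marked zero of its own, any relative cycle must be extended downward through plumbing regions, picking up a $(t\log t)$ term from the residue part of the standard form (this is the content of the paper's Lemma~\ref{lemma:period}). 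So ratios of level-$\ell$ periods do not recover the $s_i$ at level $\ell$ exactly, and the errors depend on the not-yet-recovered lower $t$'s. The paper avoids this circularity by arguing contrapositively from the \emph{lowest effective level} $\ell$ at which $X',X''$ differ: below $\ell$ all parameters agree exactly, so the $h(X)$ correction in the period formula cancels identically between $X'$ and $X''$.

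Even after this reorganization, the heart of the matter (the paper's Case~2) is not covered by ``check these contributions are lower-order.'' One must show that the map $(s_2,\ldots,s_n,t_{\ell-1})\mapsto(\text{ratios of level-}\ell\text{ periods},\ \text{ratio of a level-}(\ell{-}1)\text{ period})$ is injective on the sector. The Jacobian has $O(\log t)$ entries in the $\partial/\partial t$ column, so it is \emph{not} uniformly close to a fixed invertible matrix as $t\to 0$, and a naive inverse-function or perturbation argument does not apply. The paper instead splits into $|t''/t'|\notin[1/2,2]$ (handled by a direct magnitude comparison on one period ratio) versus $|t''/t'|\in[1/2,2]$ (handled by integrating the directional derivative along the straight segment from $X'$ to $X''$, using that the $\pi/4$ sector condition keeps $|t|$ within a factor of~$2$ along that segment so the $O(\log t)$ entries are effectively bounded there). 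Thus the angle conditions (\ref{item:horiz}) and (\ref{item:vert}) do more work than you credit them with: beyond resolving finite ambiguities, they supply the convexity and bounded-$|t|$-variation needed for this integration argument.
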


The proof of this lemma is somewhat involved; the necessary tools are developed in Section \ref{sec:tools}, and then proof is completed in Section \ref{sec:proof-lemma}.  

\subsection{Tools for proof of Lemma \ref{lemma:period-chart}}
\label{sec:tools}

Our first goal is to estimate periods of various cycles on surfaces in $V$.   Since $\P V$ is simply connected (Lemma \ref{lemma:semianalytic}), if we pick a relative cycle $\alpha$ representing an element of $H_1(X,\Sigma;\Z)$ on  some surface $X\in V$, we can consistently transport it to a cycle on all surfaces in $V$.  We will study how the period of this cycle depends on the analytic coordinates describing $X$.

To study the period of the cycle $\alpha$ on smooth surfaces, we will first introduce the \emph{perturbed period}, which comes from taking the period of the part of $\alpha$ that comes from the highest level subsurface that it interacts with.  This will be easier to understand than the full period, since it will give a well-defined holomorphic function in a full neighborhood of $\bar X$, including at the boundary.  
Our perturbed periods are a special case of the construction in \cite[Section 9]{bcggm2019}. 

\begin{defn}
  Given a relative cycle $\gamma$ on the level subsurface $\bar X^{(\ell)}$, the perturbed period is the holomorphic function
  $$\gamma^{\pert}: U \to \C,$$
  whose domain is a small neighborhood $U$ of $\bar X$ in $\barPH$, defined as follows.  In the plumbing construction of Section \ref{sec:coords}, after the various choices of coordinates, but before doing the plumbing, we truncate $\gamma$ by taking each representative curve that ends at a node and replacing it by a curve that ends at the point with coordinate $p$ (some fixed small complex number) in the coordinates chosen about that node.  Then $\gamma^{\pert}(X)$ is defined as the period over this truncated curve, before doing any rescaling.  See Figure \ref{fig:pert_periods}.  
\end{defn}

\begin{figure}[]
\begin{center}
  \includegraphics[scale=1]{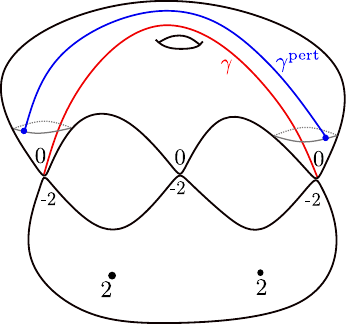}
  \caption{Construction of the perturbed period corresponding to a relative cycle $\gamma$ on a piece of the limit surface. }
  \label{fig:pert_periods}
\end{center}
\end{figure}

\paragraph*{Downward extension of cycles.}
Suppose $\gamma$ is a relative cycle on some level subsurface $\bar X^{(\ell)}$ of $\bar X$.  If we wish to consider this as a class on smooth surfaces in $V$, we can extend $\gamma$ ``downwards'' as follows.  Represent $\gamma$ by curves, and extend each curve going toward a vertical node downwards until a locally minimal component (in the level graph) is reached.  Such a component must have a zero of the differential, so a relative homology class can be produced.  See, for example, Figure \ref{fig:inj_periods}.  This produces cycles $\hat \gamma$ on surfaces in $X$.  Note that $\hat\gamma$ will not cross any degenerating cylinders (coming from horizontal nodes).  

In the next lemma, we estimate the period of $\gamma$ in terms of the perturbed period part, the part that crosses the plumbing region, and the lower level part.  We will focus on how this period depends on moduli parameters for $\bar X^{(\ell)}$ and the scaling parameter $t_{\ell-1}$.  The expression in the lemma has some non-explicit terms, but each of these either extends to a holomorphic function on a full neighborhood of $\bar X$ (and so the function, as well as its derivative, will enjoy good boundedness properties), or only depends on lower level moduli or scaling parameters.  
\begin{lemma}
  \label{lemma:period}
  Let $\hat \gamma$ be a cycle on smooth surfaces in $V$ obtained by starting with a relative homology class $\gamma$  on $\bar X^{(\ell)}$ and extending downwards (without crossing degenerating cylinders).  Let $t:=t_{\ell-1}^{a_{\ell-1}}$.  Then
  \begin{align*}
    \hat\gamma (X) = t_{-1}^{a_{-1}} \cdots t_{\ell}^{a_\ell} \left( \gamma ^{\pert}(X) + c + t  f(X)+  (t  \log t )g(X)  + t h(X) \right),
  \end{align*}
for $X\in V$, where:
\begin{itemize}
\item $c$ is a constant (which we can take small, if we choose the $p$ in the definition of perturbed periods to be small),
\item $\gamma ^{\pert}$ is the perturbed period coordinate,
\item $f,g$ are functions that extend to holomorphic functions in a full neighborhood of $\bar X$,
\item $h$ is a function of $X$, but it only depends on $X$ through moduli parameters at level lower than $\ell$, and scaling parameters $t_k$ with $k<\ell-1$.  Also, $h(X)$ is bounded above in absolute value as $X$ ranges over $V$ (we do not require that $h$ gives a holomorphic function on a full neighborhood of $\bar X$).  
\end{itemize}
\end{lemma}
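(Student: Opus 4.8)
The plan is to unwind the plumbing construction of Section~\ref{sec:coords} and track the contribution to the period $\hat\gamma(X)$ coming from three regions of the surface: the portion of $\hat\gamma$ that lies on $\bar X^{(\ell)}$ (minus small disks near the nodes where $\hat\gamma$ exits downward), the portion that crosses the plumbing annuli connecting level $\ell$ to level $\ell-1$, and the remaining portion that lies at levels strictly below $\ell$. First I would fix the representative curves for $\hat\gamma$, and recall from the construction that the differential on $\bar X^{(\ell)}$ is rescaled by the factor $t_{-1}^{a_{-1}}\cdots t_\ell^{a_\ell}$; pulling this common factor out front is exactly the prefactor appearing in the statement, so it remains to analyze the bracketed quantity as a period with respect to the \emph{unrescaled} differential near level $\ell$. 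The $\gamma^{\pert}(X)$ term is then, essentially by definition, the integral of the unrescaled differential over the truncated representative curves on $\bar X^{(\ell)}$ (the curves ending at coordinate $p$ near each node); since $\gamma^{\pert}$ extends holomorphically to a full neighborhood of $\bar X$ by the cited construction in \cite[Section~11]{bcggm2019}, this is the ``main'' term with good boundedness.

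Next I would account for the correction coming from replacing the truncated curves (ending at coordinate $p$) with curves that actually pass through the plumbing annulus $uv = T$, where $T = t_{\ell-1}^{a_{\ell-1}/b}\cdots$ involves the scaling parameters down to the level where $\hat\gamma$ terminates. Here is where the $c$, $tf(X)$, and $(t\log t)g(X)$ terms are produced: integrating a differential of the form $\left(u^{b-1} + t_{\ell-1}^{a_{\ell-1}}\cdots\,\tfrac{r}{u}\right)du$ (and its modification-differential corrections) across the annulus between $|u| = p$ and the gluing circle produces a constant piece ($c$, controllable by shrinking $p$), a piece polynomial in $t = t_{\ell-1}^{a_{\ell-1}}$ with holomorphic coefficients in the other parameters ($tf(X)$), and — crucially — a logarithmic piece $(t\log t)g(X)$ arising from the residue term $\tfrac{r}{u}du$ integrated radially, since $\int_p^{\sim T} \tfrac{du}{u} \sim \log T \sim \tfrac{1}{b}\log t + (\text{lower-level logs})$. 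I would need to check that the coefficient functions $f, g$ indeed extend holomorphically across the boundary — this follows from the holomorphic dependence of standard coordinates, modification differentials, and the change-of-coordinates maps on the parameters, all of which is supplied by \cite[Theorems 4.1--4.3, Proposition 11.3]{bcggm2019}.

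Finally, the term $th(X)$ collects the contribution of the part of $\hat\gamma$ lying strictly below level $\ell$: on that part the differential carries the rescaling $t_{-1}^{a_{-1}}\cdots t_{\ell-1}^{a_{\ell-1}}\cdots$, which relative to the front factor $t_{-1}^{a_{-1}}\cdots t_\ell^{a_\ell}$ contributes an extra $t_{\ell-1}^{a_{\ell-1}}\cdots = t \cdot(\text{lower factors})$; this is why the whole lower-level period appears multiplied by $t$ inside the bracket, and by construction it depends on $X$ only through moduli parameters at levels $< \ell$ and scaling parameters $t_k$ with $k < \ell-1$. Boundedness of $h$ over $V$ follows because those lower-level periods, suitably normalized, vary continuously over the compact closure of the relevant parameter region (using the angle restrictions defining $V$ to control any residual logarithmic growth at deeper levels). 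The main obstacle I anticipate is bookkeeping: cleanly separating the annulus integral into the constant / polynomial / logarithmic pieces while correctly attributing which powers of which $t_k$ land in the explicit prefactor versus inside $f$, $g$, $h$, and verifying the holomorphic-extension claims for $f$ and $g$ by carefully invoking the holomorphicity statements of \cite{bcggm2019} rather than re-deriving them. The $\log$ term is the subtle feature that cannot be absorbed into a holomorphic function, so isolating it correctly is the heart of the argument.
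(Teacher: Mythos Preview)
Your plan is correct and matches the paper's proof essentially step for step: the same three-part decomposition (portion on $\bar X^{(\ell)}$, plumbing annuli, lower levels), the same explicit integral $\int_T^p (u^{b-1} + T^b\,r/u)\,du$ across each plumbing region producing the $c$, $tf$, and $(t\log t)g$ pieces, and the same absorption of everything below level $\ell$ into $th$. The paper organizes the plumbing-annulus bookkeeping via a short case analysis on which levels $i>j$ the node joins (the cases $i=\ell,\,j=\ell-1$; $i<\ell$; and $i=\ell,\,j<\ell-1$), which is exactly the ``cleanly separating'' step you flag as the main obstacle, so you have correctly anticipated where the work lies.
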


\begin{proof}
  We begin by expressing $\hat\gamma$ as the union of three disjoint parts.  We cut $\hat \gamma$ at the base point $p$ on each lower component used in the plumbing construction.  We also need to cut at base points $p$ on the upper components; such base points can be introduced in the same way as for the lower components.   The three parts are then:
  \begin{enumerate}[(i)]
  \item the part that lies on $\bar X^{(\ell)}$ and away from the plumbing regions \label{item:ell}
  \item the part that lies in the plumbing regions \label{item:plumb} 
  \item the part that lies on level subsurfaces at level $j< \ell$ and away from the plumbing regions.  \label{item:lower}
  \end{enumerate}

  For (\ref{item:ell}), by the definition of the perturbed period $\gamma^{\pert}$ above, the contribution is equal to $t_{-1}^{a_{-1}} \cdots t_{\ell}^{a_\ell} \gamma ^{\pert}(X)$.
  
  For (\ref{item:plumb}), we will analyze each such plumbing region which $\hat\gamma$ crosses separately, and then we can add all the expressions together.   Each such region corresponds to a node joining level $i$ to level $j$, where $\ell\ge i > j$.  The contribution to $\hat\gamma(X)$ is given by
  $$t_{-1}^{a_{-1}} \cdots t_i^{a_i} \int_T^1 \left(u^{b-1} + T^b \frac{r}{u} \right) du,$$
  where $T=t_{i-1}^{a_{i-1}/b}\cdots t_{j}^{a_{j}/b}$, and $r$ (which comes from the modification differential) extends to a holomorphic function in a full neighborhood of $\bar X$.   We claim that the integral in the above equals an expression of the form
  $$ c+ t  f(X)+  (t  \log t )g(X)  + t h(X).$$
  
  To prove this, first note that $u^{b-1}$ term in the integrand gives a $c + t f(X)$ contribution, where $f$ is just some constant multiple of a power of $t$.  

  Now we split up into cases based on $i,j$ to analyze the contribution of the $T^b\frac{r}{u}$ term in the integrand.  
  \begin{itemize}

      \item   When $i=\ell, j=\ell-1$, we have $T = t_{\ell-1}^{a_{\ell-1}/b}$, and the contribution is 
    \begin{align*}
      \int_T^1 T^b \frac{r}{u} du = T^b r \left( \log 1 - \log T \right) = -  (r/b) t \log t,
    \end{align*}

    which becomes part of the $(t\log t) g(X)$ term.  %

  \item   When $i<\ell$, we do a computation similar to the one immediately above, and we ultimately get a contribution of the form $t f(X) + t h(X)$.  Here $h$ is a product of powers of various $t_k$ and a term of the form $\tau \log \tau$, where $\tau$ is also a product of powers of $t_k$.  Since $\tau\log\tau \to 0$ as $\tau\to 0$, this $h$ varies in a bounded away over $V$.

  \item   When $i=\ell$, $j<\ell-1$, we get a contribution of the form
    $$tf(X) + (t \log t)g(X)+th(X).$$
    To see this, we split up the $\log (t_{\ell-1}^{a_{\ell-1}/b} \cdots t_j^{a_j/b})$ in the expression we get by evaluating the integral into
    $$\log (t_{\ell-1}^{a_{\ell-1}/b})  + \log (t_{\ell-2}^{a_{\ell-2}/b} \cdots t_j^{a_j/b}).$$
    The first term behaves as in the $i=\ell, j=\ell-1$ case above, and the second behaves as in the $i<\ell$ case.  
  \end{itemize}

  For pieces of type (\ref{item:lower}), the period has the form $t_{-1}^{a_{-1}} \cdots t_{\ell}^{a_\ell} t_{\ell-1}^{a_{\ell-1}}h(X)$, where $h$ depends only parameters at level lower than $\ell$.  This $h$ varies in a bounded way as $X$ varies over $V$, since the moduli and scaling parameters on which it depends are bounded.  

  Combining the estimates for the pieces in (\ref{item:ell}), (\ref{item:plumb}),  (\ref{item:lower}) gives the desired estimate.  
  
\end{proof}

\subsection{Proof of Lemma \ref{lemma:period-chart}}
\label{sec:proof-lemma}

\begin{proof}[Proof of Lemma \ref{lemma:period-chart}]

  Since $\P V$ is simply connected (Lemma \ref{lemma:semianalytic}), we can choose a basis for relative homology at some surface in $V$ and then consistently transport it to a basis at all other surfaces in $V$.  Taking the periods of these classes gives the map $\phi: V \to \C^n$, which is clearly locally linear (recall Definition \ref{defn:period-chart}).

  It remains to show that $\phi$ is injective, which is rather technical, since it involves carefully estimating periods of cycles, including those that cross between different levels.  The reader is encouraged to first consider the proof in the case of a boundary point $\bar X$ of the form given by Figure \ref{fig:inj_periods}, where there are just two levels.   

  Suppose for the sake of contradiction that $\phi(X')=\phi(X'')$, with $X',X''\in V$ and $X'\ne X''$.  First suppose that the projective classes $[X'],[X'']$ are equal.  We have that $X' = c'X$ and $X'' = c''X$, where $X$ is the distinguished representative of the projective class produced by the plumbing construction (as discussed in Section \ref{sec:coords}), and $c'\ne c''$.  Thus $\phi(X') = c'\phi(X) \ne c''\phi(X) =\phi(X'')$, contradiction.   

  So we assume for the rest of the argument that $[X']\ne [X'']$, and we will prove that $[\phi(X')] \ne [\phi(X'')]$.  Since every relative homology class is a linear combination of the classes defining $\phi$, it is sufficient to find classes $\gamma_1,\gamma_2$ such that $\gamma_1(X')/\gamma_2(X') \ne \gamma_1(X'')/\gamma_2(X'')$.  To prove this statement about ratios, it clearly suffices to assume that $X',X''$ are the distinguished representatives produced in Section \ref{sec:lms}.  Since the projective classes are different, the analytic coordinates for $X',X''$ must be different.

  Our first task is to determine at which levels to look for these classes $\gamma_1,\gamma_2$, in terms of the analytic coordinates for $X',X''$.  To this end we will define the \emph{effective level} $\ell$ of each moduli, horizontal node, and scaling parameter (caution: this is somewhat different than the notion of level that will be used for subsurfaces and curves in Section \ref{sec:vol-comp}).  The motivation for the definition is that if two surfaces agree for all analytic coordinates at effective levels $\ell$ and lower, then ratios of periods that interact only with subsurfaces at level $\ell$ or lower should also be equal.  Here is the definition:  
  \begin{enumerate}[(i)]
  \item For a moduli parameter $s$ that deforms the level subsurface $\bar X^{(i)}$, the effective level is $i$.
  \item For a scaling parameter $t_i$, the effective level is $i+1$. (The reason we take the effective level higher than $i$ is that $t_i$ does not affect ratios of periods at levels $i$ and below.)  
      \item For a horizontal node parameter $t$, the effective level is $i+\frac12$, where $i$ is the level subsurface on which the pair of simple poles corresponding to the horizontal node lie.
  \end{enumerate}

  Consider all parameters that take different values for $X',X''$; among the effective levels of these, let $\ell$ be the lowest.  

  We now split into three cases based on the value of $\ell$.  In the first two cases (in which $\ell$ is an integer), we begin by choosing cycles $\gamma_1,\ldots,\gamma_n$ in the relative homology of $\bar X^{(\ell)}$ that give a basis for the space of functionals on the subspace of period coordinates cut out by the residue conditions, as in Section \ref{sec:moduli}.  See Figure \ref{fig:inj_periods}.  
  The periods of these give the moduli parameters corresponding to $\bar X^{(\ell)}$.  We take $\gamma_1$ to be the cycle that has period $1$ on all surfaces in the boundary stratum (recall from Section~\ref{sec:smoothing} that we are choosing the holomorphic section of the projectivization map by normalizing a particular cycle to have period $1$).    In all three cases we must prove that $[\phi(X')] \ne [\phi(X'')]$.

\medskip \noindent \underline{Case 1:} \emph{$\ell = -N$ (i.e. the surfaces differ at the lowest effective level)}

This case is straightforward because periods on the bottom level subsurface have a simple expression in terms of the analytic coordinates.

By the above definition of effective level, there must be some moduli parameter $s$ for the lowest level subsurface $\bar X^{(-N)}$ that has two different values $s'\ne s''$ for $X',X''$, respectively.
The parameter $s$ is a period of some relative homology class on $\bar X^{(-N)}$, which we can assume is $\gamma_2$.   Now since $\gamma_2$ is on the lowest level, it extends uniquely on all smooth surfaces near $\bar X$ to a relative homology class which does not cross any of the curves that are pinched at $\bar X$.

Now, the period of $\gamma_2$ differs between $X',X''$, while the period of $\gamma_1$ does not; hence the ratio of the two periods differs.  In fact we have the following explicit expressions for the periods on a smooth surface $X$ with parameter $s$:
$$\gamma_2(X) = t_{-1}^{a_{-1}} \cdots t_{\ell}^{a_\ell} \cdot s,$$
$$\gamma_1(X) = t_{-1}^{a_{-1}} \cdots t_{\ell}^{a_\ell}. $$
Hence
$$\frac{\gamma_2(X')}{\gamma_1(X')} = \frac{(t_{-1}')^{a_{-1}} \cdots (t_{\ell}')^{a_\ell} \cdot s'}{(t_{-1}')^{a_{-1}} \cdots (t_{\ell}')^{a_\ell}}= s' \ne s'' = \frac{(t_{-1}'')^{a_{-1}} \cdots (t_{\ell}'')^{a_\ell} \cdot s''}{(t_{-1}'')^{a_{-1}} \cdots (t_{\ell}'')^{a_\ell}} = \frac{\gamma_2(X'')}{\gamma_1(X'')},$$
and we are done since we have found a pair of periods with different ratios for $X',X''$.

\begin{figure}[]
\begin{center}
  \includegraphics[scale=1]{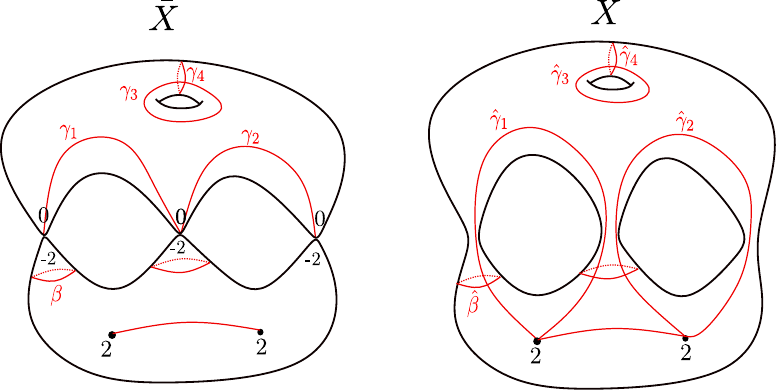}
  \caption{Extension of cycles on limit surface $\bar X$ to a smooth surface $X$ in the multi-sector $V$.  Case 2 applies if the moduli parameters for the bottom components of $X',X''$ are all the same. We are then forced to consider the cycles $\hat\gamma_1, \hat\gamma_2$ crossing between levels. }
  \label{fig:inj_periods}
\end{center}
\end{figure}

\medskip \noindent \underline{Case 2:} \emph{$\ell >-N$, and $\ell$ is an integer (i.e. the surfaces are the same at the bottom level, and the lowest effective level where they differ does not come from a degenerating cylinder)}

This case is involved since we have to understand how periods crossing between levels depend on the analytic coordinates (logarithmic terms appear) and how ratios of such periods behave.  

Recall that we have chosen $\gamma_1,\ldots,\gamma_n$ in the relative homology of $\bar X^{(\ell)}$ that give a basis for the space of functionals on the subspace of period coordinates cut out by the residue conditions; $\gamma_1$ is the cycle whose period is $1$ on all surfaces in the boundary stratum.  

We let $\beta$ be the relative homology class on $\bar X^{(\ell-1)}$ whose period is exactly $1$ at $\bar X$ and on nearby surfaces in the boundary stratum.  

Now the $\gamma_i$ classes, as well as the $\beta$ class, can be extended downwards on the smooth surfaces in $V$ to give relative homology classes $\hat \gamma_i$ and $\hat \beta$.  We require that these extended cycles do not cross any degenerating cylinders (coming from horizontal nodes).  This is the same procedure described in the setup before Lemma \ref{lemma:period}.

We will show that either $\hat\gamma_i(X')/\hat\gamma_1(X') \ne \hat\gamma_i(X'')/\hat\gamma_1(X'')$, for some $i$, or $\hat\beta(X')/\hat\gamma_1(X') \ne \hat\beta(X'')/\hat\gamma_1(X'')$.  To do this, we need to compute the periods of $\hat \gamma_i$ and $\hat\beta$ with respect to the analytic parameters $s_i$ for $i>1$ and $t:=t_{\ell-1}^{a_{\ell-1}}$.  Here $s_i$ is the moduli parameter corresponding to the period of $\gamma_i$ at the boundary.

By our assumption on effective levels, we know that 
$$ (s_2',\ldots,s_n',t')  \ne  (s_2'',\ldots,s_n'',t''),$$
(note that if $t' = t''$, then $t_{\ell-1}'= t_{\ell-1}''$, since our surfaces are in the multi-sector $V$ and so satisfy conditions given by (\ref{item:vert}) in Section \ref{sec:vk}).  

We will define a map $P$ on the subset of $\P V$ given by
\begin{align*}
  S:= \{ X \in \P V: &\text{ for every analytic coordinate } a \text{ with effective level} < \ell, \\
&\text{ the value of } a \text{ at } X \text{ agrees with the common value for } [X'],[X'']\}.
\end{align*}
Define $P$ by 
\begin{align*}
  P = (P_1,\ldots,P_n): S & \to \C^n \\
  (s_2,\ldots,s_n,t) & \mapsto \left( \frac{\hat\gamma_2(X)}{\hat\gamma_1(X)}, \ldots, \frac{\hat\gamma_n(X)}{\hat\gamma_1(X)}, \frac{\hat\beta(X)}{\hat\gamma_1(X)} \right),
\end{align*}
where $X$ is the distinguished representative produced by the plumbing construction for the projective equivalence class with coordinates $(s_2,\ldots,s_n,t)$.   We use the common value for coordinates at lower levels; coordinates at higher levels will only affect the scaling of periods on the right, so the ratios are still well-defined.

We will understand $P$ by controlling its derivative, which is the content of:

\begin{sublemma}
  \label{sublemma:der}
The derivative of $P$ has the following form: 
\begin{align*}
  DP = 
  \left(\begin{matrix}
  d + o(1) &  &  & O(\log t)\\
   & \ddots & & \vdots \\
    & & d + o(1) & O(\log t)\\
   o(t) & \cdots & o(t) & d + o(1)
 \end{matrix}\right),
\end{align*}
where all blank entries are $o(1)$, and $d\ne 0$ only depends on $\bar X$.  Here we use the usual $o(\cdot)$ and $O(\cdot)$ notations to denote quantities that are asymptotically smaller than, respectively, smaller than or equal to, the argument, as $\epsilon \to 0$ (where $\epsilon$ is the parameter used to define $V$ in Section \ref{sec:vk}).  The implied constants depend only on $\bar X$ (not on $S$).
\end{sublemma}

\begin{proof}
  First we write explicit expressions for $\hat\gamma_i(X), \hat\beta(X)$.

By Lemma \ref{lemma:period},
\begin{align*}
  \hat\gamma (X) = t_{-1}^{a_{-1}} \cdots t_{\ell}^{a_\ell} \left( \gamma ^{\pert}(X) + c + t  f(X)+  (t  \log t )g(X)  + t h(X) \right).
\end{align*}

The expression for the period of $\hat \beta$ is simpler, since this class only interacts with subsurfaces at levels $\ell-1$ and below: 
\begin{align*}
  \hat \beta(X) = t_{-1}^{a_{-1}} \cdots t_{\ell}^{a_\ell} \cdot (tb) ,
\end{align*}
where $b$ is a function that is constant on $S$, and such that $b= 1+o(1)$.  

We now aim to estimate the partial derivatives of the components of $P$.  

First we need estimates on how the perturbed periods vary.  At the boundary, the perturbed period is just a fixed translate of the corresponding relative period, which implies that  $\frac{\partial \gamma_i^{\pert}}{\partial s_i}\Big|_{\bar X} = 1,$ for $i\ne 1$,
and $\frac{\partial \gamma_i^{\pert}}{\partial s_j}\Big|_{\bar X} = 0,$ for $i\ne j$.  
Since $\gamma_i^{\pert}$ is a holomorphic function, it follows that at any $X$ for which $[X]\in S$. 
$$\frac{\partial \gamma_i^{\pert}}{\partial s_i} = 1 + o(1), \text{ \ \ for } i \ne 1,$$
$$\frac{\partial \gamma_i^{\pert}}{\partial s_j} = o(1), \text{ \ \ for } i \ne j.$$
Also, since $\gamma_i^{\pert}$ is holomorphic, we have 
$$\frac{\partial \gamma_i^{\pert}}{\partial t} = O(1).$$

Now we proceed to estimate the partial derivatives of components of $P$, evaluated at a point $X$ for which $[X]\in S$:
\begin{itemize}
\item  For $i\ne 1$: 
\begin{align*}
  \frac{\partial}{\partial s_i} \frac{\hat\gamma_i}{\hat\gamma_1} =& \frac{\partial}{\partial s_i} \frac{\gamma ^{\pert}_i + c_i + t  f_i+  (t  \log t )g_i  + t h_i }{\gamma ^{\pert}_1 + c_1 + t  f_1 + (t  \log t )g_1 + t h_1}\\
                                                                  =& \frac{(1+o(1))\left(\gamma ^{\pert}_1 + c_1 + t  f_1 + (t  \log t )g_1 + t h_1\right)}{(\gamma ^{\pert}_1 + c_1 + t  f_1 + (t  \log t )g_1 + t h_1)^2} \\
  &- \frac{o(1) \left(\gamma ^{\pert}_i + c_i + t  f_i+  (t  \log t )g_i  + t h_i\right)}{(\gamma ^{\pert}_1 + c_1 + t  f_1 + (t  \log t )g_1 + t h_1)^2} \\
  =& d+ o(1),
\end{align*}
where $d= \frac{1}{\gamma_1^{\pert}(\bar X)+c_1}$. Note that we can take $\gamma_1^{\pert}(\bar X) + c_1$ to be non-zero, since the disc removed for perturbed period coordinates is small, so $\gamma_1^{\pert}(\bar X)$ is close to $\gamma_1(\bar X) =1$, and $c_1$ can be taken to be small.  

\item For $i\ne j$:
\begin{align*}
  \frac{\partial}{\partial s_i} \frac{\hat\gamma_j}{\hat\gamma_1} =& \frac{\partial}{\partial s_i} \frac{\gamma ^{\pert}_j + c_j + t  f_j+  (t  \log t )g_j  + t h_j }{\gamma ^{\pert}_1 + c_1 + t  f_1 + (t  \log t )g_1 + t h_1}\\
  =& \frac{o(1)\left(\gamma ^{\pert}_1 + c_1 + t  f_1 + (t  \log t )g_1 + t h_1\right)} {\left(\gamma ^{\pert}_1 + c_1 + t  f_1 + (t  \log t )g_1 + t h_1\right)^2}\\
  &- \frac{o(1) \left(\gamma ^{\pert}_j + c_j + t  f_j+  (t  \log t )g_j  + t h_j\right)}{\left(\gamma ^{\pert}_1 + c_1 + t  f_1 + (t  \log t )g_1 + t h_1\right)^2} \\
  =& o(1).
\end{align*}

\item For any $i$:
\begin{align*}
  \frac{\partial}{\partial t} \frac{\hat\gamma_i}{\hat\gamma_1} =& \frac{\partial}{\partial t} \frac{\gamma ^{\pert}_i + c_i + t  f_i+  (t  \log t )g_i  + t h_i }{\gamma ^{\pert}_1 + c_1 + t  f_1 + (t  \log t )g_1 + t h_1}\\
  = &\frac{\left(O(1) + O(\log t) \right)\left(\gamma ^{\pert}_1 + c_1 + t  f_1 + (t  \log t )g_1 + t h_1\right)}{(\gamma ^{\pert}_1 + c_1 + t  f_1 + (t  \log t )g_1 + t h_1)^2} \\
                                                                 & - \frac{\left(\left(O(1) + O(\log t) \right)\right) \left(\gamma ^{\pert}_i + c_i + t  f_i+  (t  \log t )g_i  + t h_i\right)}{(\gamma ^{\pert}_1 + c_1 + t  f_1 + (t  \log t )g_1 + t h_1)^2} \\
  =& O(\log t).
\end{align*}

\item  For $i\ne 1$:
\begin{align*}
  \frac{\partial}{\partial s_i} \frac{\hat\beta}{\hat\gamma_1} &=\frac{\partial}{\partial s_i}  \frac{ bt }{ \gamma ^{\pert}_1 + c_1 + t  f_1 + (t  \log t )g_1 + t h_1 }\\
                                                               &= \frac{ -o(1)bt }{ \left(\gamma ^{\pert}_1 + c_1 + t  f_1 + (t  \log t )g_1 + t h_1\right)^2 } \\
                                                               & = o(t).
\end{align*}

\item Finally:
\begin{align*}
  \frac{\partial}{\partial t} \frac{\hat\beta}{\hat\gamma_1} &=\frac{\partial}{\partial t}  \frac{ bt }{ \gamma ^{\pert}_1 + c_1 + t  f_1 + (t  \log t )g_1 + t h_1 }\\
                                                                   &=\frac{ b(\gamma ^{\pert}_1 + c_1 + t  f_1 + (t  \log t )g_1 + t h_1 ) -  \left(O(1) +O(\log t) \right) bt}{\left(\gamma ^{\pert}_1 + c_1 + t  f_1 + (t  \log t )g_1 + t h_1\right)^2}\\
                                                                   &=d+o(1),                                                                   
\end{align*}
where $d= \frac{1}{\gamma_1^{\pert}(\bar X)+c_1}$, as in the first bullet above (recall also that $b=1+o(1)$).

\end{itemize}

Putting together the above estimates gives the desired form for $DP$.  

\end{proof}

Since each $P_k$ is ratio of periods, to finish the proof in this case, it suffices to find some $k$ such that $P_k([X'])\ne P_k([X''])$.  This is achieved from the following:

\begin{sublemma}
  The map $P: S\to \C^n$ defined above is injective, provided that the set $V$ defined in Section \ref{sec:vk} (which contains  $[X'],[X'']$) is chosen with sufficiently small $\epsilon$. 
\end{sublemma}

\begin{proof}
  Our strategy to prove injectivity of $P$ is to estimate the derivative and then integrate.  For surfaces in $V$, we will work with the distinguished representative $X$ of the projective equivalence class throughout.  Suppose for the sake of contradiction that $[Y'],[Y'']\in S$ satisfy $[Y']\ne [Y'']$ but $P([Y']) = P([Y''])$. We begin by applying Sublemma \ref{sublemma:der}, which tells us the form of the derivative matrix $DP$. 

The idea for the remainder of the proof is as follows.  Notice that for $t$ small, the matrix $DP$ has non-zero determinant (all products, except the product of diagonal entries, are $O\left(t\log t\right)$, which goes to $0$ as $t\to 0$).  If $DP$ were close to a \emph{fixed} invertible matrix, then we can get injectivity by integrating the derivative and using the Fundamental Theorem of Calculus.  However, the $O(\log t)$ entries in $DP$ can blow up as $t\to 0$.  The solution is to divide into two cases. In (\ref{item:diffmag}) we assume that the values $t',t''$ for $Y',Y''$ have quite different magnitudes and then directly show that $P_n$ takes different values.  In (\ref{item:samemag}) we assume that $t',t''$ have somewhat similar magnitudes, in which case the relevant values of $DP$ are in fact close to a fixed invertible matrix and so we can use the integration argument.

So we consider two cases:

\begin{enumerate}[(I)]
\item \label{item:diffmag}
  Suppose $|t''|/|t'| \notin [1/2,2]$.
We can assume without loss of generality that $|t'|> 2|t''|$.  In this case, we don't need the derivative estimate.  Note that 
   \begin{align*}
     |P_n(Y')| = \left|\frac{bt'}{\gamma_1^{\pert}(Y') +r_1t'\log t' + t'c_1} \right| > 0.9 |t'| \frac{|b|}{|\gamma_1^{\pert}(\bar X)|} > 1.8|t''| \frac{|b|}{|\gamma_1^{\pert}(\bar X)|} ,
   \end{align*}
when $V$ (hence $t'$) is small.  Similar estimates give $|P_n(Y'')| < 1.1 |t''| \frac{|b|}{|\gamma_1^{\pert}(\bar X)|} $ for $V$ small, so $P_n(Y')\ne P_n(Y'')$, and we are done.  

\item \label{item:samemag}
  Suppose $|t''|/|t'| \in [1/2,2]$.  We will consider the directional derivative in the direction $v=(v_1,\ldots,v_{n-1}, \tau)^T$.  Multiplying by the matrix $DP$ and using Sublemma \ref{sublemma:der} gives the following expressions for the directional derivatives of the component functions of $P$:
 \begin{align*}
   \nabla_v(P_i) &= dv_i + o(1) \max_i |v_i| + \tau \cdot O(\log t), \text{ \ \  for } i=1,\ldots,n-1, \\
   \nabla_v(P_n) &= o(t) \max_i|v_i| + \tau(d+o(1)),
 \end{align*}
where $d\ne 0$ is some complex number that only depends on $\bar X$.  
 Now $\P V$ is a convex region (with respect to the analytic coordinates), and hence $S$ is also convex.  So we can connect $Y',Y''$ by a straight line segment $L$ lying in $S$ (and $L$ has non-zero length, since $Y',Y''$ have different analytic coordinates).  Let $v$ be the direction of this segment. 

 Note that for every surface $X$ on the segment $L$, we have
 \begin{align*}
   |t| \in \left[ \frac{|t'|}{2}, 2|t'| \right],
 \end{align*}
 since all such $t$ lies in a sector of angle at most $\pi/4$, by conditions (\ref{item:horiz}) and (\ref{item:vert}) in Section \ref{sec:vk}.  (The distance from $0$ to $t$ is a convex function along $L$, so $|t|\le \max(|t'|,|t''|) \le 2|t'|$.  If the triangle formed by $0,t',t''$ has an obtuse angle, which cannot be the angle at $0$, then $|t|$ is monotone along the segment connecting $t',t''$, and hence $|t|\ge \min(|t'|,|t''|) \ge |t'|/2$.  If the triangle is acute, then  $|t|= |t'| \cos\theta$ for some $\theta \in [0,\pi/4]$, so $|t|\ge |t'|\sqrt{2}/2\ge |t'|/2$.)
   
   We now break into two further subcases depending on the size of $\max_i|v_i|$, which we take to be $|v_j|$:
   \begin{enumerate}[(a)]
   \item \label{item:a}
     $|v_j|  \ll \frac{|\tau|}{|t'|}.$  We then get
     \begin{align*}
       \nabla_v(P_n)|_X &= o(t) |v_j| + \tau(d+o(1)) = o(2t') |v_j| + \tau(d+o(1))\\
       &= \tau (d + o(1)). 
     \end{align*}
     More precisely, the above means that for each $\kappa>0$, there is some constant $c>0$ such that if $|v_j|  < c \frac{\tau}{|t'|},$ then
     \begin{align}
       \big|\nabla_v(P_n)|_X - d\tau\big| < \kappa |d\tau|. \label{eq:a}
     \end{align}
     In particular, we can take the constant $c$ for $\kappa=1/2$.  This $c$ depends on the implied constant in $o(\cdot)$ above (which depends on $\bar X$, but not on $S$).  
     
   \item \label{item:b}
     $|v_j| \gg  |\tau| \cdot |\log t'|$.  Then
     \begin{align*}
       \nabla_v(P_j)|_X &= dv_j + o(1) |v_j| + \tau \cdot O(\log 2 t') = v_j(d+ o(1)).  
     \end{align*}
     More precisely, the above means that for each $\kappa>0$, there is some constant $C>0$ such that if $|v_j| > C \tau|\log t'|,$ then
     \begin{align}
       \big|\nabla_v(P_j)|_X - dv_j\big| < \kappa |dv_j|.\label{eq:b}
     \end{align}
     In particular, we can take the constant $C$ for $\kappa=1/2$.  This $C$ depends on the implied constant in $O(\cdot)$ above  (which depends on $\bar X$, but not on $S$).  
   \end{enumerate}

   We now choose $\epsilon$ in the definition of $V$ in Section \ref{sec:vk} small depending on the constants $c,C$ above.  Specifically, we take $\epsilon$ such that $|\epsilon \cdot \log \epsilon| < c/C$ (note that the LHS goes to zero as $\epsilon \to 0$) for the pair $(c,C)$ that we get in the above for $\kappa=1/2$.  Note that $c,C$ depend only on $\bar X$, not $S$.  Since $Y'\in V$, we have that $|t'|< \epsilon$, hence $|t'|\cdot |\log t'| < c/C$, which means $C\tau |\log t'| < c \frac{\tau}{|t'|}$, and so the subcases (a) and (b) cover all possibilites for $|v_j|$. 
   
Now define $z,k$ as follows.  In case (a), we take $z=d\tau$ and $k=n$.  In case (b), take $z=dv_j$ and $k=j$ (if $|v_j|$ falls into both cases, choose one arbitrarily).   Note that $z\ne 0$, since some $v_i$ or $\tau$ must be non-zero (and $d\ne 0$), and then the inequality for (a) or (b) implies that $z$ is also non-zero.   Consider the open half-plane $H$ in $\C$ whose boundary is the line through the origin perpendicular to $z$.  From the inequalities \eqref{eq:a} or \eqref{eq:b} above, we get that $\nabla_v(P_k)|_X$ lies in $H$ for all $X\in L$.  Then by the Fundamental Theorem of Calculus, the difference $P_k(Y'')-P_k(Y')$ can be expressed as an integral of $\nabla_v(P_k)|_X$ over the segment $S$.  Since the integrand lies in $H$, and $H$ is closed under addition, we get that $P_k(Y'')-P_k(Y')\in H$.  In particular it is non-zero, hence $P(Y')\ne P(Y'')$, and we are done. 

\end{enumerate}

\end{proof}

\medskip \noindent \underline{Case 3:} \emph{$\ell$ is not an integer (i.e. the lowest effective level where the two surfaces differ comes from a degenerating cylinder).}

Here we must understand the period of a curve crossing the degenerating cylinder that witnesses the effective level.

Let $j=\ell-1/2$.  All the moduli parameters for the level subsurfaces $\bar X^{(j)}$ and below are the same for $X',X''$.  For the scaling parameters, we have $t_i'=t_i''$ for $i\le j-1$.  By assumption, there is some degenerating cylinder at level $j$ for which the parameter values $t',t''$ for $X',X''$, are different.  On surfaces obtained from $\bar X^{(j)}$ by smoothing out the horizontal node, we can define a relative homology class $\alpha$ that crosses this cylinder once.  We extend $\alpha$ downwards to a relative homology class $\hat\alpha$ on smooth surfaces in $V$, and we can do this in such a way that $\hat\alpha$ does not cross any other degenerating cylinders (as in Case 2).  We take $\gamma_1$ to be the class on $\bar X^{(j)}$ that has period $1$ on all surfaces near $\bar X$ in the boundary stratum.   This can be extended downwards to a relative homology class $\hat\gamma_1$ on smooth surfaces in $V$.

In the plumbing construction to smooth the horizontal node, plumbing is performed on the discs of radius $1$ in standard coordinates centered at the nodes.  The class $\hat\alpha$ can be decomposed into a piece that lies outside this disc; the period of this piece does not depend on $t$.  There is also a portion that crosses the plumbing region.  The period here is equal to $\int_1^{\sqrt{t}} \frac{r}{u} du =(r/2) \log t $, where $r$ is the residue at the simple pole.  Since we are working in a sector, $\log$ can be consistently defined.  

We then get 
$$\frac{\hat\alpha(X')}{\hat\gamma_1(X')} = \frac{(t_{-1}')^{a_{-1}} \cdots (t_{j}')^{a_j}(c + (r/2) \log t')}{(t_{-1}')^{a_{-1}} \cdots (t_{j}')^{a_j}d} = \frac{c+(r/2)\log t'}{d},$$
$$\frac{\hat\alpha(X'')}{\hat\gamma_1(X'')} = \frac{(t_{-1}'')^{a_{-1}} \cdots (t_{j}'')^{a_j} (c + (r/2) \log t'')}{(t_{-1}'')^{a_{-1}} \cdots (t_{j}'')^{a_j} d} = \frac{c+(r/2)\log t''}{d}.$$
It is crucial in the above that $c,r,d$ are the same for $X',X''$; this is because these depend only on moduli parameters at level $j$ and below, and level scaling parameters for level $j-1$ and below, and all these parameters are the same for $X',X''$.

So, since we are assuming that $t' \ne t''$, we get $\frac{\hat\alpha(X')}{\hat\gamma_1(X')} \ne
\frac{\hat\alpha(X'')}{\hat\gamma_1(X'')},$ and we are done.  

\end{proof}

\section{Intersecting $V$ with $\M$}
\label{sec:intersect}
We now introduce the affine invariant manifold $\M$ into the picture.  Our goal is to show that $\M$ intersects each period coordinate chart $V$ constructed in Section \ref{sec:vk} nicely.  We will use the following deep result of Filip.  

\begin{thm}[\cite{filip2016}]
    \label{thm:filip}
  Any affine invariant manifold $\M$ is a quasi-projective subvariety of $\H$.  
\end{thm}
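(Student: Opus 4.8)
This is the main theorem of \cite{filip2016}, so the plan is to recall Filip's strategy rather than to give an independent argument. The goal is to show that $\M$ --- which, by the very definition of an affine invariant manifold, is locally cut out in period coordinates by homogeneous linear equations with \emph{real} coefficients --- is a \emph{closed algebraic} subset of the ambient stratum $\H$; since strata are themselves quasi-projective (realized inside the Hodge bundle over $\barM_{g,n}$, or via the incidence variety compactification \cite{bcggm2018}), it then follows that $\M$ is a quasi-projective subvariety of $\H$.

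First I would repackage the local linear data in global, bundle-theoretic terms: over the smooth locus $\M^*$, the tangent distribution $T\M$ determines a flat, monodromy-invariant subbundle $V_\M$ of the relative cohomology local system, whose image in absolute cohomology is symplectic with respect to the intersection form. One then restricts the polarized variation of Hodge structure carried by $H^1$ to $\M^*$ and studies how $V_\M$ sits inside it.

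The crucial step --- and the main obstacle --- is Filip's \emph{splitting theorem}: the subbundle $V_\M$ underlies a sub-variation of Hodge structure which moreover splits off \emph{holomorphically} from its symplectic-orthogonal complement (equivalently, the second fundamental form of the inclusion vanishes). No purely Hodge-theoretic argument yields this; the input from the $SL_2(\R)$-dynamics is essential. Concretely, one combines semisimplicity of the Kontsevich--Zorich cocycle (Filip, building on Forni and on the Hodge-norm estimates underlying \cite{ekz2014}) with monotonicity of the Hodge norm along the Teichm\"uller geodesic flow to force the relevant block decomposition, and hence to produce Hodge tensors (extra endomorphisms, real multiplication, torsion relations) that are locally constant along $\M$.

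Granting the splitting, $\M^*$ is exhibited, at least locally, as a component of a \emph{Hodge locus} inside a suitable finite cover of the stratum --- the locus where a prescribed family of classes stays of Hodge type. To globalize this I would invoke Wright's theorem that the field of definition of $\M$ is a number field, which controls the monodromy well enough to phrase the Hodge conditions over $\overline{\Q}$, and then apply the Cattani--Deligne--Kaplan theorem on algebraicity of Hodge loci (or the o-minimal GAGA argument of Bakker--Klingler--Tsimerman) to conclude that this locus is algebraic. Taking its closure and intersecting with the quasi-projective stratum $\H$ gives that $\M$ is quasi-projective. The two genuinely hard points are the holomorphic splitting --- the heart of \cite{filip2016} --- and the bookkeeping needed to pass from local real-linear equations to a single globally defined, number-field-rational system of Hodge-theoretic conditions.
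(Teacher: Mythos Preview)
The paper does not prove this statement at all: Theorem~\ref{thm:filip} is stated with the attribution \cite{filip2016} and then immediately used as a black box in the proof of Corollary~\ref{cor:semi}. There is no ``paper's own proof'' to compare your proposal against.

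That said, your sketch is a fair high-level summary of Filip's actual argument in \cite{filip2016}: reinterpret the tangent distribution of $\M$ as a flat subbundle of relative cohomology, use semisimplicity of the Kontsevich--Zorich cocycle together with Hodge-norm monotonicity to force this subbundle to underlie a sub-variation of (mixed) Hodge structure, exhibit $\M$ as a Hodge locus, and conclude algebraicity via Cattani--Deligne--Kaplan. A couple of small caveats: the reference to Bakker--Klingler--Tsimerman is anachronistic relative to \cite{filip2016} and not what Filip actually uses; and the role of Wright's field-of-definition result is somewhat different from how you phrase it (it is used to get the real-multiplication and torsion conditions defined over a number field, rather than to ``control monodromy'' per se). But for the purposes of this paper none of that matters --- the theorem is quoted, not proved, so no proof is expected here.
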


\begin{cor}
  \label{cor:semi}
 $\P\M$ is a semianalytic subset of $\barPH$, thought of as a real-analytic orbifold.  
\end{cor}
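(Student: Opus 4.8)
The plan is to combine Filip's theorem (Theorem~\ref{thm:filip}), which gives that $\M$ is a quasi-projective subvariety of $\H$, with general facts relating algebraic sets, their real-analytic images, and semianalyticity. First I would note that $\M$ being quasi-projective means that, as a complex subvariety of the stratum $\H$ (which is itself a quasi-projective variety, or at least carries a natural complex-algebraic structure for which period coordinates are local complex-analytic charts), $\M$ is locally cut out by finitely many algebraic equations. In particular, viewing $\H$ as a complex manifold (orbifold), $\M$ is a \emph{complex-analytic} subvariety — hence also a real-analytic subvariety, since a complex-analytic set is obviously semianalytic (the real and imaginary parts of the defining holomorphic functions are real-analytic). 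So $\M \subset \H$ is semianalytic in $\H$.

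Next I would pass to $\barPH$. Since $\M$ is invariant under the $\C^*$-scaling action (it is cut out by \emph{homogeneous} linear equations in period coordinates, so in particular it is a cone), its image $\P\M = p(\M)$ is well-defined in $\P\H \subset \barPH$. The content of the corollary is that $\P\M$ is semianalytic not just in $\P\H$ but in the compactification $\barPH$ — i.e. that near each boundary point $\bar X$, the set $\P\M$ (which lies entirely in the open part $\P\H$) is still cut out, in the analytic orbifold coordinates of Section~\ref{sec:coords}, by real-analytic equalities and inequalities. The natural route is: in a coordinate neighborhood $U$ of $\bar X$, cover $U \cap \P\H$ by the finitely many semianalytic period coordinate charts $\P V_k$ of Section~\ref{sec:vk} (Lemma~\ref{lemma:semianalytic}). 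On each $V_k$, the period map $\phi: V_k \to \C^n$ is an injective locally linear chart (Lemma~\ref{lemma:period-chart}); pulling back the finitely many algebraic/holomorphic equations defining $\M$ through $\phi$ exhibits $\M \cap V_k$ as a semianalytic subset of $V_k$, and since $V_k$ is a semianalytic subset of $p^{-1}(U)$ and $p|_{V_k}$ is a (real-analytic, proper onto its image) coordinate expression, $\P(\M \cap V_k) = \P\M \cap \P V_k$ is semianalytic in $\P V_k$, hence in $U$. Finally, a \emph{finite} union of semianalytic sets is semianalytic, so $\P\M \cap U = \bigcup_k \big(\P\M \cap \P V_k\big)$ is semianalytic near $\bar X$; since $\bar X$ was arbitrary and semianalyticity is a local condition, $\P\M$ is semianalytic in $\barPH$.

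The main obstacle I anticipate is the bookkeeping at the boundary: a priori $\P\M$ accumulates on the boundary stratum, so one must check that expressing $\P\M$ via the charts $\P V_k$ — which are open in $\P\H$ but whose closures touch the boundary — still yields a description by \emph{globally} real-analytic (in)equalities on a neighborhood in $\barPH$, rather than merely on $\P\H$. This is exactly where the structure of Section~\ref{sec:vk} is used: the $\P V_k$ are themselves semianalytic \emph{in $\barPH$} (Lemma~\ref{lemma:semianalytic} phrases them via inequalities on $\Re, \Im$ of the analytic coordinates $s,t$, which are genuine coordinates on $\barPH$), and the defining functions of $\M$, pulled back through the injective chart $\phi$, extend to honest real-analytic functions on a neighborhood of each point. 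I would also want to invoke the standard fact (e.g.\ \cite{kankaanrinta2011}) that the image of a semianalytic set under a proper real-analytic map is semianalytic, applied to $p$, to clean up the passage from $V_k$ to $\P V_k$; the properness here is harmless since we only need it on the compact part of a coordinate neighborhood. Apart from this, all steps are routine closure properties of semianalytic sets plus the two already-established lemmas.
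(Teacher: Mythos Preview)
Your argument has a genuine gap at the boundary. You correctly observe that $\M$ is semianalytic in $\H$ (this follows from Filip), and that each $\P V_k$ is semianalytic in $\barPH$. But you then need $\P\M \cap \P V_k$ to be semianalytic \emph{in $\barPH$}, i.e.\ in a full neighborhood of the boundary point $\bar X$, and this does not follow from the two facts just listed. Knowing that a set $S$ is semianalytic in an open set $\P V_k \subset \P\H$ and that $\P V_k$ is itself semianalytic in $\barPH$ does \emph{not} imply $S$ is semianalytic in $\barPH$: the defining functions of $S$ must themselves extend real-analytically across the boundary, and you have not shown this. Your claim that ``the defining functions of $\M$, pulled back through the injective chart $\phi$, extend to honest real-analytic functions on a neighborhood of each point'' is exactly where the argument fails. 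The components of $\phi$ are periods, and by Lemma~\ref{lemma:period} these involve terms of the form $t\log t$ in the analytic coordinates of $\barPH$; such expressions are not real-analytic at $t=0$, so the zero sets of linear functions of periods are in general \emph{not} semianalytic near a boundary point. (A toy model: the graph of $s=-t\log t$ in the $(s,t)$-plane is real-analytic on $\{t>0\}$ but is not a semianalytic germ at the origin.) Also note that your ``standard fact'' that images of semianalytic sets under proper real-analytic maps are semianalytic is false in general---such images are only subanalytic.

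The paper's proof circumvents this entirely by passing through the Incidence Variety Compactification $\barPH^{\text{IVC}}$, which \emph{is} a projective variety. Filip's theorem then gives that $\P\M$ is quasi-projective in $\barPH^{\text{IVC}}$, hence a difference $W-Z$ of two algebraic (in particular complex-analytic) subsets there. Since there is a complex-analytic map $\pi:\barPH\to\barPH^{\text{IVC}}$ restricting to the identity on $\P\H$, one has $\P\M=\pi^{-1}(W)-\pi^{-1}(Z)$, a difference of two real-analytic subvarieties of $\barPH$, which is semianalytic. The essential point is that Filip gives \emph{global algebraic} equations for $\M$ in a genuine compactification, not merely the local linear (period) equations; it is the former, not the latter, that extend across the boundary of $\barPH$.
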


\begin{proof}
  This would be easier if $\barPH$ was a projective algebraic variety; however this is currently unknown.  Instead, we will use
  \begin{enumerate}[(i)]
  \item the fact that we have available the \emph{Incidence Variety Compactification} $\barPH^{\text{IVC}}$ of $\P\H$ \cite{bcggm2018}, which is a projective variety, and    
  \item the existence of a surjective map $\pi:\barPH \to \barPH^{\text{IVC}}$ that is a complex-analytic (hence real-analytic) map of complex-analytic spaces.  See \cite[Theorem 1.2 (5)]{bcggm2019}.
  \end{enumerate}

  By Theorem \ref{thm:filip}, $\pi(\P\M)$ is a quasi-projective subvariety of $\barPH^{\text{IVC}}$.  Furthermore, $\pi(\P\M) = W-Z$, where $W$ is the Zariski closure of $\pi(\P\M)$ in $\barPH^{\text{IVC}}$, and $Z = \barPH^{\text{IVC}} -\pi(\P\H)$, which is also an algebraic variety.  Then we have
  $$\P\M = \pi^{-1}(\pi(\P\M))=\pi^{-1}(W-Z) = \pi^{-1}(W)-\pi^{-1}(Z).$$
  Since $W,Z$ are both algebraic subvarieties of the IVC, their preimages under the real-analytic map $\pi$ are real-analytic varieties, and hence their difference is semianalytic.  
 \end{proof}

 The following basic facts give finiteness of components of semianalytic sets.  
 
\begin{fact}
  \label{fact:local-components}
  Let $S$ be a semianalytic subset of a real-analytic orbifold $M$.  Then for any $p\in M$, and $W'$ a neighborhood of $p$ in $M$, we can pass to a smaller neighborhood $W\subset W'$, $p\in W$, such that $W \cap S$ has finitely many connected components.
\end{fact}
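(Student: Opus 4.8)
The plan is to reduce Fact~\ref{fact:local-components} to the classical local finiteness of connected components of semianalytic germs. Recall that a semianalytic set $S\subset \R^n$ has the property that every point of $\R^n$ has a neighborhood meeting only finitely many connected components of $S$; this is one of the standard basic properties of semianalytic sets (see \L ojasiewicz, or Bierstone--Milman, ``Semianalytic and subanalytic sets''). So the whole content is translating our orbifold setting into the Euclidean one via charts.

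First I would unwind the definition of semianalytic subset of a real-analytic orbifold: by definition, for the orbifold point $p$ there is an orbifold chart $\psi\colon \tilde W' \to W'$ (with $\tilde W'\subset \R^n$ open, acted on by a finite group $G$) such that $\psi^{-1}(S\cap W')$ is a semianalytic subset of $\tilde W'$. Pick a point $\tilde p\in \psi^{-1}(p)$. By the classical local finiteness property applied at $\tilde p$, there is an open neighborhood $\tilde W$ of $\tilde p$ in $\tilde W'$ such that $\tilde W\cap \psi^{-1}(S)$ has finitely many connected components. Shrinking $\tilde W$, I may assume $\tilde W$ is $G$-invariant (replace it by the intersection of its $G$-translates, which is still an open neighborhood of $\tilde p$ with finitely many components in its intersection with $\psi^{-1}(S)$, since it is contained in $\tilde W$). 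Then set $W := \psi(\tilde W)$, an open neighborhood of $p$ contained in $W'$.

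Next I would check that $W\cap S$ has finitely many connected components. The map $\psi$ restricts to a continuous surjection $\tilde W \cap \psi^{-1}(S) \to W\cap S$. A continuous surjective image of a set with finitely many connected components again has finitely many connected components (each component of the target is covered by the images of the components of the source, and there are finitely many of those; more precisely the number of components of the image is at most the number of components of the source). Hence $W\cap S$ has finitely many connected components, as desired. In the case where $p$ is a manifold point the same argument works with $G$ trivial, so this also covers the general (non-orbifold) statement.

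The only mildly delicate point — and the place I would be most careful — is the very first step: being sure that ``semianalytic subset of a real-analytic orbifold'' really does mean ``semianalytic in some (equivalently every) orbifold chart'', so that the classical Euclidean local finiteness result is directly applicable. This is exactly the definition used in the excerpt (citing \cite{kankaanrinta2011}), and semianalyticity is preserved under the real-analytic transition maps between overlapping charts, so there is no ambiguity. Everything else — $G$-invariantizing the neighborhood, pushing forward connected components along a continuous surjection — is routine point-set topology. So I do not anticipate a genuine obstacle; the statement is essentially a bookkeeping translation of the standard fact that semianalytic germs have finitely many connected components.
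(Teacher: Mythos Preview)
Your approach is exactly what the paper does: it simply cites \cite[proof of Corollary~2.7]{bm1988} for the manifold case and declares the extension to orbifolds ``straightforward'', and you have written out that extension explicitly. One small slip to flag: the parenthetical justification that $\bigcap_g g\tilde W$ still meets $\psi^{-1}(S)$ in finitely many components ``since it is contained in $\tilde W$'' is not valid as stated --- passing to an arbitrary open subset can increase the number of components. The clean fix is to arrange $\tilde W$ to be $G$-invariant from the outset: linearize the finite $G$-action at the fixed point $\tilde p$ (Bochner/Cartan), so that small Euclidean balls about $\tilde p$ are $G$-invariant, and then invoke the local conic structure of semianalytic sets, which gives that $B(\tilde p,\epsilon)\cap \psi^{-1}(S)$ has finitely many (indeed a constant number of) components for all sufficiently small $\epsilon$. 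With this adjustment your argument goes through unchanged.
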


\begin{proof}
  This is proved in \cite[proof of Corollary 2.7]{bm1988} for the case when $M$ is a real-analytic manifold.  The generalization to orbifolds is straightforward.  See also \cite[Theorem 6.4,  part (7)]{kankaanrinta2011} for a related statement.  
\end{proof}

\begin{fact}
    \label{fact:components}
  Let $M$ be a \emph{compact} real-analytic orbifold, and let $S\subset M$ be a semianalytic subset. Then $S$ has finitely many components.
\end{fact}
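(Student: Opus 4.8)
The plan is to deduce this global finiteness statement from the local finiteness of Fact~\ref{fact:local-components} by a compactness argument; the only real content beyond quoting that fact is to check that the local connected components glue up correctly into global ones. No semianalyticity of $S$ will be used except through Fact~\ref{fact:local-components}.

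First I would apply Fact~\ref{fact:local-components} at every point: for each $p\in M$ choose an open neighborhood $W_p$ of $p$ such that $W_p\cap S$ has only finitely many connected components. Since $M$ is compact, finitely many of these neighborhoods, say $W_1,\dots,W_m$, already cover $M$; write $N_i<\infty$ for the number of connected components of $W_i\cap S$. (In the orbifold setting one works in local manifold charts, exactly as in Fact~\ref{fact:local-components}.)

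The key preliminary observation is that $S$ is locally connected. Indeed, for each $i$ the set $W_i\cap S$ has only finitely many connected components; connected components are always relatively closed, and in a space with only finitely many components each component is therefore also relatively open. Hence the components of $W_i\cap S$ are open in $W_i\cap S$, and so open in $S$. As the $W_i$ cover $M$, every point of $S$ has a connected neighborhood that is open in $S$, and consequently every connected component $D$ of $S$ is both open and closed in $S$.

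Now I would count. Fix $i$ and consider the components $D$ of $S$ that meet $W_i$. Since $D$ is open and closed in $S$, the set $D\cap W_i$ is open and closed in $W_i\cap S$, hence is a nonempty union of connected components of $W_i\cap S$. Distinct components of $S$ are disjoint, so they contribute disjoint unions of components of $W_i\cap S$; therefore at most $N_i$ components of $S$ can meet $W_i$. Since the $W_i$ cover $M$, every component of $S$ meets some $W_i$, so $S$ has at most $N_1+\dots+N_m<\infty$ connected components. The argument is essentially routine; the only place needing a moment's care is the local connectedness claim — one must be sure that ``finitely many components'' genuinely upgrades the always-true closedness of components to openness, so that global components are open and the counting goes through — and beyond that there is no serious obstacle.
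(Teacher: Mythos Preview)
Your proof is correct and follows essentially the same approach as the paper: apply Fact~\ref{fact:local-components} at each point, extract a finite subcover by compactness, and conclude. The paper's version is slightly quicker at the end --- it simply observes that $S$ is then a union of finitely many connected sets and hence has finitely many components --- so your detour through local connectedness, while correct, is not needed.
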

  
\begin{proof}
  By Fact \ref{fact:local-components}, for each $p\in M$, we can find an open set $W_p\subset M$, $p\in W_p$, such that $W_p\cap S$ is a union of finitely many connected sets.  By compactness of $M$, we can cover $M$ by finitely many of these, say $W_{p_1},\ldots,W_{p_k}$.  So $S$ is a union of finitely many connected sets, and it follows that it has finitely many connected components.  
\end{proof}

We now apply the above facts to $V\cap \M$.  

\begin{lemma}[Finite intersections]
    \label{lemma:finite-intersections}
  The intersection of the smooth locus $\M^* \subset \M$ with each set $V$ from Section \ref{sec:vk} has finitely many connected components.
\end{lemma}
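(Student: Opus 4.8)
The plan is to combine Corollary~\ref{cor:semi} (that $\P\M$ is a semianalytic subset of $\barPH$) with Fact~\ref{fact:local-components} and the fact that $\barPH$ is compact (Theorem~\ref{thm:compact}). First I would recall that each $V = V_k$ is defined as $p^{-1}(\P V_k)$, where $\P V_k$ is a ``multi-sector'' cut out by finitely many real-analytic inequalities on the analytic coordinates near the fixed boundary point $\bar X$ (items (i)--(iv) of Section~\ref{sec:vk}); in particular $\P V_k$ is itself a semianalytic subset of $\barPH$, and it is contained in the coordinate neighborhood $U$ of $\bar X$ from Lemma~\ref{lemma:semianalytic}. Intersecting with $\P\M^*$ (or $\P\M$) keeps us in the semianalytic category, since finite intersections of semianalytic sets are semianalytic.

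The key point is that $\P V_k$ has compact closure in $\barPH$: its defining conditions are $|s| \le \epsilon$, $|t|\le \epsilon$, together with closed sector conditions on the arguments, so $\overline{\P V_k}$ is a compact semianalytic set sitting inside a single coordinate chart around $\bar X$. I would then argue as follows. For each point $p$ of the compact set $\overline{\P V_k}$, apply Fact~\ref{fact:local-components} with the ambient orbifold $\barPH$ and the semianalytic set $S = \P\M^* \cap \P V_k$ (or its closure): there is a neighborhood $W_p$ of $p$ such that $W_p \cap S$ has finitely many connected components. By compactness of $\overline{\P V_k}$, finitely many such $W_{p_1},\ldots,W_{p_m}$ cover it, hence cover $\P V_k \cap \P\M^*$. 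Therefore $\P V_k \cap \P\M^*$ is a union of finitely many connected sets, and so has finitely many connected components. Pulling back under the projectivization map $p:\H\to\barPH$, and using that $p^{-1}$ of a connected piece of $\P V_k \cap \P\M^*$ is connected (each $\C^*$-orbit is connected, and $\P V_k$ is simply connected by Lemma~\ref{lemma:semianalytic} so the fibration $V_k \to \P V_k$ admits no monodromy obstruction), we conclude $V_k \cap \M^*$ has finitely many connected components.

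There is one technical wrinkle I would need to address carefully: Fact~\ref{fact:local-components} is a statement about points of the ambient orbifold and neighborhoods therein, so to apply it at a boundary point $p \in \overline{\P V_k} \setminus \P\H$ I must make sure that a neighborhood of such a $p$ in $\barPH$ still sits inside a single real-analytic orbifold chart — this is fine since $\barPH$ is a smooth complex (hence real-analytic) orbifold by Theorem~\ref{thm:compact}, and the analytic coordinates of Section~\ref{sec:coords} furnish exactly such charts near $\bar X$. One could also sidestep boundary points entirely by working directly with the compact orbifold $\barPH$ and the semianalytic set $\overline{\P\M} \cap \overline{\P V_k}$ and invoking Fact~\ref{fact:components}, then noting that removing the (semianalytic, closed) complement $\barPH \setminus \P\H$ and the non-smooth locus only decreases the number of components in the relevant bounded region; I would likely present it this way for cleanliness.

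\textbf{Main obstacle.} The only real subtlety is bookkeeping between the three spaces $V_k \subset \H$, $\P V_k \subset \barPH$, and $\P\M^* \subset \barPH$, and making sure that ``finitely many components upstairs in $\H$'' genuinely follows from ``finitely many components downstairs in $\barPH$'' — i.e., that the $\C^*$-bundle $V_k \to \P V_k$ does not break a connected piece into several pieces. This is where simple-connectedness of $\P V_k$ (Lemma~\ref{lemma:semianalytic}) does the work: the preimage of a connected, simply connected semianalytic piece under the $\C^*$-fibration is connected. Everything else is a routine application of the quoted facts about semianalytic sets together with compactness of $\barPH$.
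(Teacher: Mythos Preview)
Your proposal is correct and follows essentially the same route as the paper: show that $\P V_k \cap \P\M^*$ is semianalytic in the compact orbifold $\barPH$ (using Lemma~\ref{lemma:semianalytic} and Corollary~\ref{cor:semi}, plus the observation that the singular locus of $\M$ is itself a subvariety), invoke finiteness of components for semianalytic sets, and then pass back up to $V_k \cap \M^*$. Two minor simplifications: the paper applies Fact~\ref{fact:components} directly rather than redoing the compactness-plus-local-finiteness argument by hand, and for the last step you do not need simple-connectedness of $\P V_k$ --- since $\M^*$ is $\C^*$-invariant, $V_k \cap \M^* = p^{-1}(\P V_k \cap \P\M^*)$, and connected $\C^*$-fibers already give a bijection of components. (Also, your parenthetical claim that removing a closed semianalytic set ``only decreases the number of components'' is false in general, so stick with your primary argument rather than that alternative.)
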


\begin{proof}
  First we show that $\P V \cap \P\M^*$ has finitely many components.  By Lemma \ref{lemma:semianalytic}, $\P V$ is a semianalytic subset of $\barPH$.  By Corollary \ref{cor:semi}, $\P\M$ is a semianalytic subset of $\barPH$, and hence $\P\M^*$ also is (since the singular locus of variety is a subvariety).   Thus the intersection $\P V \cap \P\M^*$ is semianalytic.  Finiteness of components then follows by applying Fact \ref{fact:components}.  

  The connected components of $V \cap \M^*$ are in bijection with those of $\P V \cap \P\M^*$, hence also finite.  
\end{proof}

We now show that, in our volume estimation problem, $\mu_\M$ can be replaced by a finite sum of linear measures on subspaces $W$, and $\M$-independence of saddle connections can be replaced  by $W$-independence, defined below.  

Fix a period coordinate chart $Q\subset \H$, and $W$ a subset of $Q$ given by pulling back a single linear subspace from $\C^n$.  Let $W^1\subset W$ be the locus of unit area surfaces.  We define
\begin{align*}
  L_{\epsilon_1,\ldots,\epsilon_k}^W := \{&X\in W^1:  X \text{ has } W\text{-independent saddle connections } s_1,\ldots,s_k,\\
  &\text{ with } |s_i| \le \epsilon_i \}.
\end{align*}

Here a collection of saddle connections on $X\in W$ is said to be \emph{$W$-independent} if their relative homology classes define linearly independent functionals on $W$.  

We define a measure $\mu_W$ on $Q$ to be the natural Lebesgue measure on the linear subspace $W$ (we can pick an arbitrary normalization).  Let $\mu_W^1$ be the corresponding measure supported on the unit area locus $W^1$, i.e. for any measurable subset $S\subset Q^1$, 
$$\mu_W^1(S) : = \mu_W\left( \{ sX : X \in S, \text{ } 0\le s \le 1\} \right).$$

\begin{lemma}[Finitely many subspaces]
  \label{lemma:finite-subspaces}
  Fix a $V$ from Section \ref{sec:vk} satisfying Lemma \ref{lemma:period-chart}, and an affine invariant submanifold $\M$.  There exist finitely many linear subspaces $W_1,\ldots,W_j \subset V$, and a constant $C$, such that for any $\epsilon_1,\ldots,\epsilon_k >0$, 
$$\mu_{\M}^1\left(L_{\epsilon_1,\ldots,\epsilon_k}^\M \cap V\right) \le C \cdot \sum_\ell \mu_{W_\ell}^1\left(L_{\epsilon_1,\ldots,\epsilon_k}^{W_\ell}\right).$$
\end{lemma}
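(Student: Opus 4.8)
The plan is to use the finiteness from Lemma~\ref{lemma:finite-intersections}: the smooth locus $\M^*$ meets $V$ in finitely many connected components $C_1,\ldots,C_j$. On each component $C_\ell$, the period map $\phi$ (which exists since $V$ is a period coordinate chart by Lemma~\ref{lemma:period-chart}) is injective, and $\M^*$ is locally a single linear subspace of period coordinates. Since $C_\ell$ is connected and the local linear structure is cut out by locally constant (in fact globally determined by monodromy-invariant, but here $\P V$ is simply connected so genuinely constant) linear equations, $\phi(C_\ell)$ is an open subset of a single linear subspace $W_\ell\subset V$. Thus $\M^*\cap V = \bigcup_\ell C_\ell \subset \bigcup_\ell W_\ell$ as subsets of the period coordinate chart.

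\textbf{Step 1.} Record that $\mu_\M$ restricted to $V$ is supported on $\M^*$ (since $\mu_\M(\M-\M^*)=0$) and that on each $C_\ell$ it agrees, up to a bounded multiplicative constant, with $\mu_{W_\ell}$ restricted to the open set $\phi(C_\ell)$. This constant is bounded because the period coordinate chart is fixed and $\M^*$ is an embedded suborbifold there; absorb all these finitely many constants, and the orbifold-degree factors, into a single $C$.

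\textbf{Step 2.} Compare the ``short saddle connection'' loci. For $X\in C_\ell$, a collection of saddle connections is $\M$-independent iff it is $W_\ell$-independent: by definition both conditions say the relative homology classes restrict to linearly independent functionals on $T_X\M = W_\ell$ (viewed inside $H^1(X,\Sigma;\C)$ via the period coordinate identification on $V$). Hence $L^\M_{\epsilon_1,\ldots,\epsilon_k}\cap C_\ell \subset L^{W_\ell}_{\epsilon_1,\ldots,\epsilon_k}$ — one has to be slightly careful that $L^{W_\ell}$ is defined on all of $W^1$, not just $\phi(C_\ell)$, but the containment only goes one way, which is all we need. Pass to the coned-off unit-area measures: coning is monotone and commutes with the inclusion, so $\mu^1_\M(L^\M_{\ldots}\cap C_\ell)\le C\,\mu^1_{W_\ell}(L^{W_\ell}_{\ldots})$.

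\textbf{Step 3.} Sum over the finitely many $\ell$:
\begin{align*}
\mu^1_\M\bigl(L^\M_{\epsilon_1,\ldots,\epsilon_k}\cap V\bigr)
= \sum_\ell \mu^1_\M\bigl(L^\M_{\epsilon_1,\ldots,\epsilon_k}\cap C_\ell\bigr)
\le C\sum_\ell \mu^1_{W_\ell}\bigl(L^{W_\ell}_{\epsilon_1,\ldots,\epsilon_k}\bigr),
\end{align*}
which is the claim. \textbf{The main obstacle} is Step 1: verifying that on each connected component $C_\ell$ the affine measure $\mu_\M$ really does push forward (under the single injective chart map $\phi$) to a constant multiple of Lebesgue measure on one fixed linear subspace $W_\ell$ — i.e. that the local linear subspaces $T_X\M$ do not vary as $X$ moves within $C_\ell$. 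This is where one uses that $\P V$, hence $C_\ell$, is simply connected (Lemma~\ref{lemma:semianalytic}), so there is no monodromy obstruction to the subspace being globally constant, together with the fact that an affine invariant manifold is \emph{locally} cut out by a \emph{fixed} system of linear equations, and these patch across the simply connected set. The orbifold bookkeeping near orbifold points of $\barPH$ contributes only bounded multiplicative factors and is folded into $C$.
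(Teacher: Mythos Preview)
Your proposal is essentially correct and follows the same approach as the paper: use Lemma~\ref{lemma:finite-intersections} to get finitely many connected components $C_\ell$ of $\M^*\cap V$, observe that each $C_\ell$ sits inside a single linear subspace $W_\ell$ on which $\mu_\M$ agrees with $\mu_{W_\ell}$ up to a constant, note that $\M$-independence coincides with $W_\ell$-independence on $C_\ell$, and sum.

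One small slip in your justification of Step~1: you write ``$\P V$, hence $C_\ell$, is simply connected,'' but a connected component of $\M^*\cap V$ need not inherit simple connectedness from $\P V$ (subsets of simply connected sets are not simply connected in general). Fortunately simple connectedness is not needed. The paper's argument is cleaner here: since $V$ has an injective period map $\phi$ to $\C^n$, the tangent space $T_X\M$ is literally a linear subspace of $\C^n$ at each $X$, and the set of points in $C_\ell$ where $\M^*$ locally agrees with a \emph{fixed} subspace $W_\ell$ is both open (by definition of ``locally'') and closed (limits stay in $W_\ell$, and dimension is constant). Connectedness of $C_\ell$ then forces this set to be all of $C_\ell$. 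So replace your monodromy remark with this open-closed argument and the proof goes through exactly as you wrote it.
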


\begin{proof} 
  By Lemma \ref{lemma:finite-intersections} (Finite intersections), the smooth locus $\M^*$ intersects $V$ in finitely many components $N_1,\ldots,N_j$.  Now, by the definition of an affine invariant manifold, locally near a point of $N_\ell$, we have that $\M^*$ agrees with a linear subspace $W_\ell\subset V$ since $V$ is a period coordinate chart, by Lemma~\ref{lemma:period-chart}.  Furthermore,  the measures $\mu_{\M}$ and $\mu_{W_\ell}$ agree locally, up to a constant scaling factor $C_{\ell}$.  The subset of $N_\ell$ where $N_\ell$ agrees locally with $W_\ell$ is open and closed in $N_\ell$.  Since $N_\ell$ is connected, this means that this set of local agreement is all of $N_\ell$.  Hence $N_\ell \subset W_\ell$.  And near points in $N_\ell$, the measures $\mu_{\M}$ and $\mu_{W_\ell}$ agree up to the factor $C_{\ell}$, and furthermore the notion of saddle connections being $\M$-independent coincides with the notion of $W_\ell$-independence at such points.  Hence
$$\mu_\M^1\left(L_{\epsilon_1,\ldots,\epsilon_k}^\M \cap N_\ell\right) \le C_{\ell}\cdot  \mu_{W_\ell}^1 \left(L_{\epsilon_1,\ldots,\epsilon_k}^{W_\ell}\right).$$
Using these observations, and the fact that $\mu_{\M}(\M-\M^*)=0$, we get that
$$\mu_{\M}^1\left(L_{\epsilon_1,\ldots,\epsilon_k}^\M \cap V\right) \le \sum_\ell \mu_{\M}^1\left(L_{\epsilon_1,\ldots,\epsilon_k}^\M \cap N_\ell\right) \le \sum_\ell C_\ell \cdot \mu_{W_\ell}^1 \left(L_{\epsilon_1,\ldots,\epsilon_k}^{W_\ell}\right),$$
and we get the desired result by taking $C=C_1 + \cdots + C_\ell$.  

\end{proof}

\section{Local volume bound}
\label{sec:vol-comp}

Below is the key local (near a boundary point) result needed to prove Theorem \ref{thm:regularity}

\begin{prop}[Local volume bound]
    \label{prop:local-regularity}
  Let $\bar X \in \barPH$.  Then there exists a small neighborhood $U\subset \barPH$ containing $\bar X$ and a constant $C$ such that 
  $$\mu_\M^1 (L_{\epsilon_1,\ldots,\epsilon_k}^\M \cap p^{-1}(U)) \le C \epsilon_1^2 \cdots \epsilon_k^2,$$
  for any $\epsilon_1,\ldots, \epsilon_k>0$.
\end{prop}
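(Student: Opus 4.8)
The plan is to reduce the local volume bound to a computation on a single linear subspace $W$ inside a single period coordinate chart $V = V_k$, and then carry out the integral described in the heuristic. By Lemma~\ref{lemma:semianalytic}, finitely many $\P V_k$ cover $U \cap \P\H$ for a suitable neighborhood $U$ of $\bar X$, so it suffices to bound $\mu_\M^1(L_{\epsilon_1,\ldots,\epsilon_k}^\M \cap V_k)$ for each $k$ separately. By Lemma~\ref{lemma:finite-subspaces} (Finitely many subspaces), it in turn suffices to bound $\mu_W^1(L_{\epsilon_1,\ldots,\epsilon_k}^W)$ for each of the finitely many linear subspaces $W \subset V$. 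Since coning only rescales, and since the unit-area locus intersected with a linear subspace can be handled by integrating over the cone and using that area is a positive quadratic form bounded below on $V$, it is enough to bound the $\mu_W$-measure of the set of $X \in W$ (with, say, area between $1/2$ and $2$) having $W$-independent saddle connections $s_1,\ldots,s_k$ with $|s_i| \le \epsilon_i$.

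The core computation proceeds as follows. Fix a subset $I \subseteq \{\text{saddle connections realizable on surfaces in } V\}$ of size $k$ whose classes are $W$-independent; there are finitely many relevant combinatorial types of saddle connections near $\bar X$ (this requires an argument — see below), so we may sum over finitely many such choices of $I$ at the cost of enlarging $C$. For a fixed $I = \{s_1,\ldots,s_k\}$, I would extend $\{s_1,\ldots,s_k\}$, viewed as linear functionals on $W$, to a full basis of the dual of $W$ by adjoining further period functionals $\gamma_{k+1},\ldots,\gamma_m$ chosen from the adapted homology basis built in Section~\ref{sec:proof-lemma} (using the level structure: pick circumference curves of degenerating cylinders and bounded cycles on each level subsurface). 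This gives a linear isomorphism $W \cong \C^m$ under which $\mu_W$ is a constant multiple of Lebesgue measure. Then $L^W$ restricted to this combinatorial type lies inside a product region: each $s_i$-coordinate ranges over a disc of radius $\epsilon_i$ (area $O(\epsilon_i^2)$), each "bounded" extra coordinate ranges over a region of bounded area (these just contribute to the constant), and each "cylinder cross-curve" coordinate $\gamma_j$ — potentially unbounded as the cylinder degenerates — is nonetheless confined, once the circumference is fixed, to a region of area bounded independently of the circumference, because its imaginary part (transverse to the cylinder core) is controlled by the circumference and its real part is bounded modulo the circumference. Here Lemma~\ref{lemma:period} supplies the needed estimates relating these periods to the analytic coordinates $s_i, t_j$, and in particular shows the only unbounded behavior is the $t \log t$ / $\log t$ type, which is integrable. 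Multiplying the factors gives $O(\epsilon_1^2 \cdots \epsilon_k^2)$ for each combinatorial type, and summing over the finitely many types yields the proposition.

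I expect the main obstacle to be establishing that only finitely many combinatorial types of $\M$-independent short saddle connections arise on surfaces in $V$, and that for each type the relevant period functional genuinely extends to one of the adapted basis functionals with the claimed boundedness. A saddle connection that is short on a smooth surface $X \in V$ must, in the limit, either shrink a vertical node (its holonomy then governed by a product $t_{-1}^{a_{-1}}\cdots t_i^{a_i}$ times a perturbed period, as in Lemma~\ref{lemma:period}) or cross a degenerating cylinder (its holonomy then $\approx (\text{product of }t\text{'s})\cdot(r/2)\log t$). In either case its class is, up to the finitely many choices of which nodes/cylinders it interacts with and finitely many "bounded" completions, one of our basis functionals or a bounded combination thereof; but making the counting of these types finite — uniformly over all of $V$ — and verifying the $W$-independence is compatible with this classification is the delicate point. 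The subtlety is that a priori there could be infinitely many homotopy classes of short saddle connections (e.g. spiraling around a degenerating cylinder many times); one must argue, using the sector conditions (\ref{item:horiz}),(\ref{item:vert}) defining $V$ together with the length bound $\epsilon_i$, that only boundedly many such spirals are short, so that the contribution of the cylinder-crossing coordinate is still confined to a fixed-area region. Once this finiteness and the area confinement are in hand, the volume integral is the routine product estimate sketched above.
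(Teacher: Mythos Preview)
Your reduction to a single $V_k$ and then to a single linear subspace $W$ (via Lemma~\ref{lemma:semianalytic} and Lemma~\ref{lemma:finite-subspaces}) is exactly what the paper does, and your description of the cylinder cross-curve contribution (bounded area once the circumference is fixed) matches the paper's Lemma~\ref{lemma:cyl-bounds}. The divergence is in the core computation on $W$, and the obstacle you flag is real: there are genuinely infinitely many homotopy classes of short saddle connections on surfaces in $V$ (Dehn twisting around a degenerating cylinder produces infinitely many, and the sector conditions do not by themselves make this finite), so summing over ``combinatorial types of $I$'' is not directly workable.

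The paper sidesteps this entirely by \emph{never} enumerating saddle connection types. Instead it fixes, once and for all, an adapted basis $\beta_1,\ldots,\beta_m$ of $W^*$ extracted from the level-filtered basis of Lemma~\ref{lemma:basis} (so that for each subsurface $Z$ in $\mathcal S$, the functionals on $W$ coming from $H_{\preceq Z}$ are exactly $\beta_1,\ldots,\beta_j$ for some $j$). The crucial transfer step is Lemma~\ref{lemma:sc-bounds} (Saddle connection bounds): for a fixed class $\gamma$ at level $Y$, \emph{any} saddle connection $s$ whose level is $\succeq Y$ satisfies $|s(X)|\ge c\,|\gamma(X)|$. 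Thus if $s_1,\ldots,s_k$ are $W$-independent and listed in increasing level, then $s_1,\ldots,s_i$ already witness that $\dim\big(H_{\preceq \text{level}(s_i)}\big|_W\big)\ge i$, so $\beta_i$ lies at level $\preceq$ level($s_i$), and Lemma~\ref{lemma:sc-bounds} gives $|\beta_i(X)|\le |s_i(X)|/c \le \epsilon_i/c$. In other words, $k$ short $W$-independent saddle connections force $k$ \emph{fixed} basis functionals to be short, with no need to know which saddle connections they are. After that, the volume integral is the product estimate you describe (bounded $\beta_i$ for $i\le\ell$ via Lemma~\ref{lemma:period-bounds}, rectangle of bounded area for the cylinder-crossing $\beta_i$ via Lemma~\ref{lemma:cyl-bounds}), summed over the finitely many orderings $\succ\in\mathcal O$ and injections $\phi$ rather than over saddle-connection types. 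So the fix for your gap is to replace ``sum over types of $(s_1,\ldots,s_k)$'' by ``use the level filtration to push the shortness onto the fixed $\beta_i$''; everything else in your sketch is essentially right.
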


First we will develop the necessary tools to understand the relationship between short saddle connections and the structure of the boundary point $\bar X$.  This culminates in Lemma \ref{lemma:vol-subspace}, which is the above estimate but for a single linear subspace $W$ in the period coordinate chart $V$.    Then in Section \ref{sec:proof-local} we will combine this estimate with Lemma \ref{lemma:finite-subspaces} (Finitely many subspaces) to prove the above proposition.  

We will work with surfaces $X$ in a period coordinate chart $V$ from Section \ref{sec:vk} that covers part of a neighborhood of a boundary point $\bar X$.  We need to understand where the short saddle connections on $X$ are in terms of data from $\bar X$.   One challenge is understanding the interaction of degenerating cylinders with small subsurfaces.  

\subsection{Sizes of subsurfaces and orderings}
\label{sec:sizes-subsurfaces}
Recall that the boundary point $\bar X$ has level subsurfaces $\bar X^{(i)}$ consisting of all those components at level $i$.  Each of these corresponds to a subsurface of each $X\in V$, defined up to isotopy.  For each horizontal node (corresponding to a pair of simple poles) there is a \emph{degenerating cylinder} on $X$, which is also a subsurface defined up to isotopy.  The boundaries of all these subsurfaces and degenerating cylinders of $X$ consist of curves that are pinched in $\bar X$.  

Let $\mathcal{S}$ be the union of the set of the level subsurfaces and the set of degenerating cylinders.  (Note that we do not excise the degenerating cylinders from the level subsurfaces; doing so would not actually change any of the below in a substantial way).  Each element of $\mathcal{S}$ is a topological subsurface, defined up to isotopy.  On a particular $X\in V$, we can talk about a definite size of each element of $\mathcal{S}$:

\begin{defn}[Size]
Given a surface $X\in V$, we define $\operatorname{size}_X(Y)$ of an element $Y\in \mathcal{S}$ to be
\begin{enumerate}[(i)]
\item $\left|t_{-1}^{a_{-1}}\cdots t_i^{a_i}  \right|$ if $Y$ is the level $i$ subsurface $\bar X^{(i)}$; this is the magnitude of the scaling parameter in the plumbing construction for the projective class of $X$ (see Section~\ref{sec:plumb}) 
\item $\left|\log |t| \cdot t_{-1}^{a_{-1}}\cdots t_i^{a_i} \right|$ if $Y$ is a degenerating cylinder with horizontal node parameter $t$ whose circumference lies at level $i$ (this is approximately the height of the cylinder, since the $1/u$ pole of the differential contributes $(\log u)|_1^{\sqrt{t}}$ to the integral of the curve crossing the annulus where the plumbing happens).  
\end{enumerate}
\end{defn}

\paragraph*{Ordering of subsurfaces.}
We wish to use the structure of the boundary point $\bar X$ to understand something about the relative size of saddle connections on surfaces $X\in V$.  There is a natural ordering on the set of level subsurfaces, just given by the level.  If there are no degenerating cylinders, the ordering induced on saddle connections from their level roughly agrees with the orderings of their lengths on surfaces in $V$.  However, for saddle connections that cross degenerating cylinders at lower levels the situation is  more complicated.  

We wish to extend the ordering on level subsurfaces to an order
on all of $\mathcal{S}$, including the degenerating cylinders.  Since the expression in (ii) above contains both large terms ($|\log |t||$), and small terms (the others), the limit point $\bar X$ does not by itself tell us about the relative magnitudes of degenerating cylinder cross curves compared to other subsurfaces of $X$.  Thus, we have several possible orderings compatible with a given $\bar X$; however, since there are only finitely many, our strategy will be to do a separate volume computation for each ordering, and then take a sum.

Let $\mathcal{O}$ denote the set of total orderings $\succ$ on $\mathcal{S}$ that restrict to the natural ordering on the level subsurfaces and have the additional property that if $C$ is a degenerating cylinder and $\bar X^{(i)}$ is the level subsurface at which the circumference of $C$ lies, then $C\succeq \bar X^{(i)}$.

We say that an ordering $\succ$ in $\mathcal{O}$ is \emph{consistent} with $X$ if for any $Y_1,Y_2\in \mathcal{S}$ with $Y_1\succ Y_2$ we have that $\operatorname{size}_X(Y_1) \ge \operatorname{size}_X(Y_2)$.  For each $X$, there is at least one $\succ \in \mathcal{O}$ that is consistent with $X$.

Given a relative integral homology class $\gamma$ on surfaces in $V$ and $\succ\in \mathcal{O}$, we define the $\succ$-\emph{level} of $\gamma$ to be the minimal $Y\in \mathcal{S}$ with respect to $\succ$ such that $\gamma$ has a representative that does not intersect any $Y'\in \mathcal{S}$ with $Y'\succ Y$.
Here we consider those representatives of $\gamma$ that are unions of oriented arcs/curves (such a representative always exists).

We will call a degenerating cylinder $\succ$-\emph{wide} if it is $\succ$-greater than any level subsurface.

\subsection{Lemmas controlling periods}

In this section we prove bounds on the periods of various homology classes in terms of the sizes of the subsurfaces that they intersect.   Here we are only interested in coarse bounds (constant factors do not matter).   Parts of these lemmas could be proven using the more delicate estimates in Section \ref{sec:tools} (though the estimates there do not handle homology classes crossing degenerating cylinders).  Since this precision is not necessary for our purposes here, we do not make use of those more precise estimates in this section.  

Our first lemma states that the period of a relative homology class is coarsely bounded above by the size of its $\succ$-level.  A similar estimate has been used recently by Chen-Wright \cite[Theorem 8.1]{cw2019}.

\begin{lemma}[Period bounds]
  \label{lemma:period-bounds}
 For $\gamma$ a relative (integral) homology class defined on the surfaces in $V$, there exists a constant $C>0$ (depending only on $\gamma$ and $V$) with the following property.    For any $X\in V$ of area $1$ and any $\succ \in \mathcal{O}$ consistent with $X$, we let $Y$ be the $\succ$-level of $\gamma$.  Then
  $$|\gamma(X)| \le C\cdot \operatorname{size}_X(Y).$$
\end{lemma}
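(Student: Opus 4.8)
I would prove Lemma \ref{lemma:period-bounds} by decomposing a chosen arc-and-curve representative of $\gamma$ according to which subsurface $Y' \in \mathcal{S}$ each piece lies in, and bounding the contribution of each piece by $\operatorname{size}_X(Y')$; since all such $Y'$ satisfy $Y' \preceq Y$ (by the very definition of the level of $\gamma$) and $\succ$ is consistent with $X$, each contribution is $\le \operatorname{size}_X(Y)$, and there are only boundedly many pieces, giving the result with a constant $C$ depending only on $\gamma$ (and $\bar X$, $V$).

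\medskip

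\textbf{Key steps.} First, fix a representative of $\gamma$ as a union of finitely many oriented arcs and closed curves, chosen so that it does not meet any subsurface $Y'$ with $Y' \succ Y$ — such a representative exists by the definition of the level of $\gamma$. (The number of arcs needed can be bounded independently of $X$, since the combinatorial type of the representative is determined by $\bar X$ and the chart $V$, not by the continuous parameters.) Second, cut this representative along the boundaries of the level subsurfaces $\bar X^{(i)}$ and the degenerating cylinders, so that each resulting piece lies in a single element $Y' \in \mathcal{S}$, and the holonomy of $\gamma$ is the sum of the holonomies of these pieces. Third, bound the holonomy of each piece: for a piece in a level subsurface $\bar X^{(i)}$, the differential there is $t_{-1}^{a_{-1}}\cdots t_i^{a_i} \cdot \eta^{(i)}$ up to the plumbing modifications, and the integral of $\eta^{(i)}$ over an arc of bounded combinatorial type is bounded (the moduli parameters lie in a bounded region in $V$, and away from the plumbing annuli the differential is comparable to the fixed reference differential $\eta^{(i)}$); hence the piece contributes $O(|t_{-1}^{a_{-1}}\cdots t_i^{a_i}|) = O(\operatorname{size}_X(\bar X^{(i)}))$. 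For a piece crossing a degenerating cylinder $C$ with node parameter $t$ whose circumference lies at level $i$, the dominant contribution comes from the $\frac{r}{u} du$ term integrated across the plumbing annulus, which is $O(|\log|t|| \cdot |t_{-1}^{a_{-1}}\cdots t_i^{a_i}|) = O(\operatorname{size}_X(C))$ (a curve crossing the cylinder more than once still only contributes a bounded multiple of this, and the number of crossings is bounded by the fixed combinatorial type). Fourth, since the representative avoids all $Y' \succ Y$, every piece lies in some $Y' \preceq Y$, so by consistency of $\succ$ with $X$ we have $\operatorname{size}_X(Y') \le \operatorname{size}_X(Y)$; summing the (boundedly many) piece estimates gives $|\gamma(X)| \le C \cdot \operatorname{size}_X(Y)$.

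\medskip

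\textbf{Main obstacle.} The delicate point is the third step — getting a genuinely uniform bound on the holonomy of each piece, valid for all $X \in V$ and not just near $\bar X$. For pieces in level subsurfaces one must be slightly careful that the plumbing modification differentials (which involve the modification differential $\xi$ and the merging-of-zeros change of coordinates) do not spoil the comparison with $\eta^{(i)}$; but these modifications are supported near the nodes, depend holomorphically on bounded parameters, and the arcs can be arranged to avoid a fixed neighborhood of the nodes except when deliberately crossing a plumbing annulus, so the comparison survives with a worse constant. For the cylinder-crossing pieces, the subtlety is that $|\log|t||$ is unbounded as $t \to 0$, but this is exactly accounted for in the definition of $\operatorname{size}_X(C)$, so the estimate is of the right shape; one only needs that the residue $r$ and the parts of the curve outside the plumbing annulus contribute $O(|t_{-1}^{a_{-1}}\cdots t_i^{a_i}|)$, which is absorbed since $\operatorname{size}_X(C) \ge \operatorname{size}_X(\bar X^{(i)})$ for $C \succeq \bar X^{(i)}$. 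I would structure the write-up so that the reduction to a single arc of fixed combinatorial type happens first, after which each case is a short computation using the standard-coordinate forms \eqref{eq:standard} from Section \ref{sec:coords}.
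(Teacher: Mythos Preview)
Your approach is essentially the paper's: decompose a representative of $\gamma$ into pieces, bound each piece by the size of the subsurface it lives in, and use consistency of $\succ$ to dominate everything by $\operatorname{size}_X(Y)$. Two points need sharpening.

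First, your sentence ``each resulting piece lies in a single element $Y'\in\mathcal{S}$'' is not quite right: the elements of $\mathcal{S}$ (level subsurfaces and degenerating cylinders) do not cover $X$. Between level subsurfaces at different levels there are \emph{plumbing annuli} at the vertical nodes, which belong to neither. The paper treats these as a third case and checks that a segment crossing such an annulus between levels $i>j$ contributes $O(|t_{-1}^{a_{-1}}\cdots t_i^{a_i}|)=O(\operatorname{size}_X(\bar X^{(i)}))$, including the modification-differential term, which is $o(|t_{-1}^{a_{-1}}\cdots t_i^{a_i}|)$ because it has an extra factor of $t_{i-1}^{a_{i-1}}\cdots t_j^{a_j}\log(\cdot)$. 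You gesture at this in your obstacle paragraph but never give the estimate; it should be made explicit, since your cylinder-crossing computation only covers horizontal nodes.

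Second, and more importantly, you never use the hypothesis that $X$ has area $1$. When you write ``the differential there is $t_{-1}^{a_{-1}}\cdots t_i^{a_i}\cdot\eta^{(i)}$'', that holds only on the \emph{distinguished} representative $X'$ produced by the plumbing construction; the unit-area surface is $X=X'/\alpha$ for some $\alpha\in\C^*$, so your piecewise estimates really give $|\gamma(X)|\le \tfrac{C}{|\alpha|}\operatorname{size}_X(Y)$ (recall $\operatorname{size}_X$ depends only on the projective class, hence is the same for $X$ and $X'$). The paper closes this by observing that the area of $X'$ is bounded below over $V$ --- it is at least roughly the area of the fixed top-level differential $\eta^{(0)}$ --- so the area-$1$ condition on $X$ forces $|\alpha|$ to be bounded below, and $1/|\alpha|$ is absorbed into $C$. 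Without this step your constant is not uniform in $X$.
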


\begin{proof}
  
  Since $X$ is in $V$, there is some $\C$-rescaling $X'=\alpha X = (M,\omega)$ that is the distinguished surface coming from the plumbing construction.  We will work with this $X'$ and then compare to $X$ at the end.  By definition, $\operatorname{size}_X(Y)=\operatorname{size}_{X'}(Y)$ for any subsurface $Y\in \mathcal{S}$.

  The underlying Riemann surface $M$ can be decomposed into pieces corresponding to elements of $\mathcal{S}$. For a level subsurface $\bar X^{(i)}$ in $\mathcal{S}$, the corresponding piece of $M$ is very close to the complement of a fixed small neighborhood of the nodes in $\bar X^{(i)}$.  For a degenerating cylinder $C$ in $\mathcal{S}$ whose circumference lies at level $i$, the corresponding piece of $M$ is an annulus.  These pieces do not quite cover all of $M$; there are also \emph{plumbing annuli} for the nodes connecting level subsurfaces.  See Figure \ref{fig:surface_pieces}.   

   \begin{figure}[]
\begin{center}
  \includegraphics[scale=0.55]{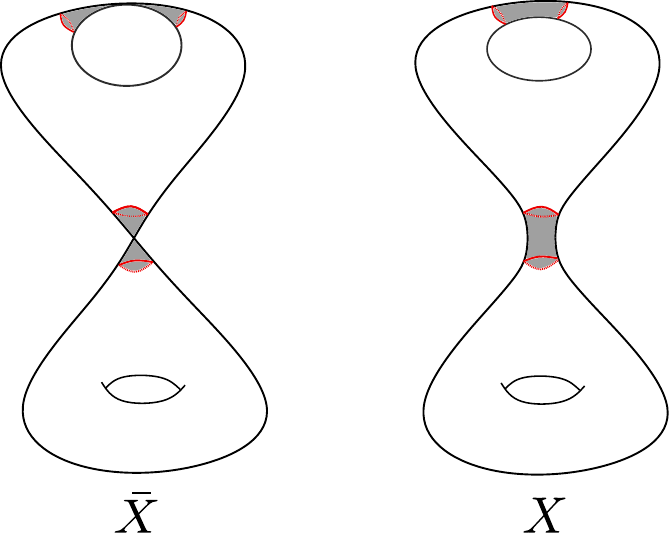}
  \caption{Neighborhoods of nodes of $\bar X$ are replaced by plumbing annuli and degenerating cylinders to get $X$.  }
  \label{fig:surface_pieces}
\end{center}
\end{figure}

  Now for $\gamma$, choose representative curves/arcs (which we will also call $\gamma$) on the surfaces in $V$ that exhibit the $\succ$-level $Y$ of $\gamma$, as defined above in Section \ref{sec:sizes-subsurfaces}.  The intersection of $\gamma$ with a piece of $M$ that corresponds to a level subsurface can be taken to be a union of fixed arc (with endpoints at holomorphically varying base points; see Section \ref{par:standard} and beginning of proof of Lemma \ref{lemma:period}).  Because of the way $V$ was defined in terms of restrictions on the plumbing coordinates, $\gamma$ winds around each plumbing annulus a bounded number of times (with the bound only depending on the boundary point $\bar X$, not $X$).  See Figure~\ref{fig:cylinder_preplumb}.  

  Now we consider the restriction of the differential $\omega$ to the pieces of $M$.
  
  \medskip \noindent \underline{Level subsurface:}

  On a piece $\hat Z$ of $X'$ corresponding to the level subsurface $Z=\bar X^{(i)}$, the restriction of $\omega$ gives a translation surface that is close to a piece of $\bar X^{(i)}$ scaled by $t_{-1}^{a_{-1}}\cdots t_i^{a_i}$.  The (small) discrepancy between the period of $\gamma$ on $\hat Z$ versus on $Z$ (suitably rescaled) is caused by:  
  \begin{enumerate}[(i)]
  \item Moduli parameters.  This error has order at most  $\left| t_{-1}^{a_{-1}}\cdots t_i^{a_i} \right|$. 
  \item Modification differential(s) coming from residues of poles on lower components, and the remerging of zeros.  The order of the contribution from a modification differential added to account for residue on $\bar X^{(j)}$ is at most
    $$\left| t_{-1}^{a_{-1}}\cdots t_i^{a_i} \cdot t_{i-1}^{a_{i-1}}\cdots t_j^{a_j} \right| = o\left(\left|t_{-1}^{a_{-1}}\cdots t_i^{a_i}\right|\right).$$
  \end{enumerate}
It follows that the restriction of $\gamma$ to such a piece has period bounded above by a constant multiple of $\left| t_{-1}^{a_{-1}}\cdots t_i^{a_i} \right|$, which equals $\operatorname{size}_X(Z)$.

\medskip \noindent \underline{Degenerating cylinder:}
~ \\ 
\indent On a piece $\hat C$ of $X'$ corresponding to a degenerating cylinder $C\in \mathcal{S}$ with parameter $t$ whose circumference lies at level $i$, the restriction of $\omega$ gives a cylinder of height on the order of $\left| \log |t| \cdot t_{-1}^{a_{-1}}\cdots t_i^{a_i} \right|$ and circumference on the order of $\left| t_{-1}^{a_{-1}}\cdots t_i^{a_i} \right|$.  We saw above that $\gamma$ winds arounds the cylinder a bounded number of times, so the restriction of $\gamma$ to the cylinder has period bounded above by a constant multiple of $\left| \log |t| \cdot t_{-1}^{a_{-1}}\cdots t_i^{a_i}\right|$, which is $\operatorname{size}_X(C)$.

\medskip \noindent \underline{Plumbing annulus:}

Finally, consider a plumbing annulus $A$ such that the higher adjacent piece $\hat Z$ corresponds to the level subsurface $Z=\bar X^{(i)}$, while the lower adjacent piece is the level subsurface $\bar X^{(j)}$.  There are two contributions to the period of  $\gamma$ restricted to $A$:
  \begin{enumerate}[(i)]
  \item Winding around the annulus.  As discussed above, the winding happens only a bounded number of times.   The contribution to the period is of order at most $|t_{-1}^{a_{-1}}\cdots t_i^{a_i}|$. 
  \item Crossing the annulus.  In the general case where there are non-zero residues (and hence modification differentials), the total contribution is at most
    $$O\left(\left|t_{-1}^{a_{-1}}\cdots t_i^{a_i}\right|\right).$$
    Included in the above is the contribution from the modification differential itself, which is order at most
    $$\left| t_{-1}^{a_{-1}}\cdots t_i^{a_i} \cdot \left(t_{i-1}^{a_{i-1}}\cdots t_j^{a_j} \log \left| t_{i-1}^{a_{i-1}/b}\cdots t_j^{a_j/b} \right| \right)\right| = o\left(\left|t_{-1}^{a_{-1}}\cdots t_i^{a_i}\right|\right).$$
      The log term comes from integrating the residue part $t_{i-1}^{a_{i-1}}\cdots t_j^{a_j} \frac{r}{u}$ of the differential over the plumbing region, which is modeled by an annulus between the discs of radius $1$ and $|T|$, where $T=t_{i-1}^{a_{i-1}/b}\cdots t_{j}^{a_{j}/b}$.  For a similar estimate, see proof of Lemma \ref{lemma:period} (in particular, the estimate for types of part (ii) in that proof).
  \end{enumerate} 
 Hence the total contribution to $\gamma$ from $A$ is of order at most $|t_{-1}^{a_{-1}}\cdots t_i^{a_i}|=\operatorname{size}_X(Z)$.

\bigskip

  We have now seen that for all the pieces of $X'$, if $\gamma$ intersects that piece, the period over the intersection is bounded above by a constant multiple of $\operatorname{size}_X(Z)$ for some $Z \in \mathcal{S}$ that $\gamma$ also intersects.  Since $\gamma$ is a curve/arc that exhibits that its relative homology class is at $\succ$-level $Y$, any such $Z$ satisfies $Z\prec Y$.
  And since $\succ$ is consistent with $X$ (and $X'$), it follows that the period of each such intersection is bounded above by a constant multiple of $\operatorname{size}_X(Y)$.  Putting all the parts of $\gamma$ together gives 
  \begin{align}
    \label{eq:xprime}
    |\gamma(X')| \le C\cdot \operatorname{size}_X(Y),
  \end{align}
For some constant $C$ depending only on $\bar X$.  
   Now we claim that the $\alpha$ from the beginning of the proof is bounded below in magnitude, with the bound depending only on $\bar X$.  This follows from the fact that the area of $X'=\alpha X$ is at least (approximately) the area of the differential $\eta_0$ on the top level subsurface $\bar X^{(0)}$, which is bounded below, while $X$ is assumed to have area $1$.  We then get from \eqref{eq:xprime} that
   $$|\gamma(X)| \le \frac{1}{|\alpha|}|\gamma(X')| \le C\cdot \operatorname{size}_X(Y),$$
for some new choice of $C$, completing the proof.  

\end{proof}

  \begin{figure}[]
\begin{center}
  \includegraphics[scale=0.42]{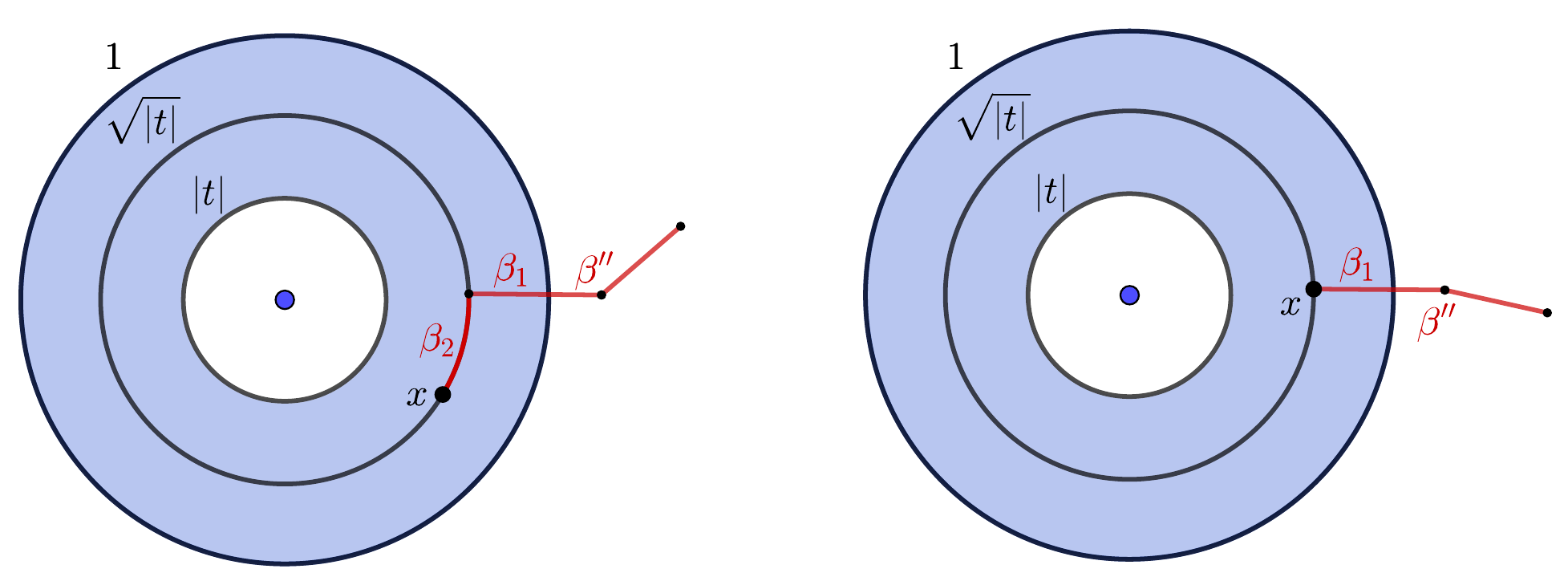}
  \caption{The two annuli are glued together, with the circles of radius $\sqrt{|t|}$ being glued to one another.  The two points labeled $x$ get identified.  The red segments form the part of a curve that crosses the degenerating cylinder. }
  \label{fig:cylinder_preplumb}
\end{center}
\end{figure}

Our next lemma allows us to control the period of a curve that crosses a single degenerating cylinder.  This period is potentially large.  But we show that if we fix the period of the circumference curve of the cylinder, then the period of the crossing curve lies in a rectangle in $\C$ of bounded area.  

\begin{lemma}[Cylinder bounds]
  \label{lemma:cyl-bounds}
  Let $\beta$ be a relative (integral) homology class defined on the surfaces in $V$ with the following properties:
  \begin{enumerate}[(i)]
  \item $\beta$ has a representative that crosses exactly one closed curve $\gamma$ that is the core curve of a degenerating cylinder.  We let $Y$ be the level subsurface containing $\gamma$.  
  \item $\beta$ has a representative such that the level subsurfaces which the representative intersects all lie at or below the level of $Y$ (here we can use $\succ$-level for any $\succ\in \mathcal{O}$, since we are only concerned with the level subsurfaces crossed by $\beta$). 
  \end{enumerate}
Then there exists a family of rectangles $\mathcal{R}(w)\subset \C$, $w\in \C$, of area bounded above by some $R$ (depending on $\bar X$ and $\beta$, but not on $X$), with the following property.  For any $X\in V$ of area $1$, we have
  $$\beta(X)\in \mathcal{R}(\gamma(X)).$$
Furthermore, the rectangles have the property that $s\mathcal{R}(w) \subset \mathcal{R}(sw)$ for any $w$ and $0< s \le 1$.  

\end{lemma}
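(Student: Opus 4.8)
The plan is to isolate the one geometric feature that makes this true: on an area‑one surface the degenerating cylinder $C$ carrying $\gamma$ is an honest flat cylinder, and the only contribution to $\beta(X)$ that is not of size $O(|w|)$ is the single crossing of $C$, which points almost exactly perpendicular to $w:=\gamma(X)$ and is bounded by the area of $C$ divided by its circumference. That perpendicularity, not merely the size bound, is exactly what confines $\beta(X)$ to a thin rectangle of bounded area rather than to a disk whose area blows up as $w\to0$.

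First I would fix $X\in V$ of area $1$, let $i$ be the level of the subsurface $Y$, and record that the flat cylinder $C$ has circumference $|w|$ and area at most $1$, hence height $h_C\le 1/|w|$. Using hypotheses (i) and (ii), I would choose a representative of $\beta$ whose portion inside $C$ is a geodesic segment crossing the core once, and whose remaining portion lies only in level subsurfaces at level $\le i$, in plumbing annuli joining such levels, and in other degenerating cylinders at level $\le i$, traversed without crossing any further core curve. The crossing segment has holonomy $p\tfrac{w}{|w|}\pm ih_C\tfrac{w}{|w|}$ with $p\in\R$ and $|p|$ at most a bounded multiple of $|w|$, the winding being bounded in terms of $\bar X$ and $\beta$ exactly as in the proof of Lemma~\ref{lemma:period-bounds}. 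For the remaining portion I would argue as in that proof by comparing to the distinguished surface $X'=\alpha X$: since $|\alpha|$ is bounded below and the residue $r$ at the node of $C$ is bounded away from $0$ on a small enough $V$, one has $|w|\asymp|t_{-1}^{a_{-1}}\cdots t_i^{a_i}|/|\alpha|$, and this dominates both the diameters on $X$ of all level subsurfaces at level $\le i$ and the circumferences of all degenerating cylinders at level $\le i$. Hence each of the boundedly many pieces of the remaining portion has holonomy of size $O(|w|)$, with implied constant depending only on $\bar X$ and $\beta$.

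Adding the pieces, I would obtain a constant $C_2=C_2(\bar X,\beta)$ with $|\Re(\beta(X)\overline w)|\le C_2|w|^2$ (all contributions parallel to $w$ are $O(|w|^2)$) and $|\Im(\beta(X)\overline w)|\le 1+C_2|w|^2$ (the one possibly large term, $\pm h_C|w|$, has absolute value at most $1$). I would then set $\mathcal R(w):=\{\,z\in\C:|\Re(z\overline w)|\le C_2|w|^2,\ |\Im(z\overline w)|\le 1+C_2|w|^2\,\}$ when $0<|w|\le\sup_{V}|\gamma|$, and $\mathcal R(w):=\{0\}$ otherwise, so that the family is defined for all $w\in\C$. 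This is a genuine rectangle with sides along $w/|w|$ and $iw/|w|$; since multiplication by $\overline w$ scales area by $|w|^2$, $\operatorname{area}\mathcal R(w)=4C_2(1+C_2|w|^2)$ is bounded above by a constant $R=R(\bar X,\beta)$ because $|\gamma|$ is bounded on $V$; $\beta(X)\in\mathcal R(\gamma(X))$ holds by construction; and $s\mathcal R(w)\subset\mathcal R(sw)$ for $0<s\le 1$ follows from a one‑line computation in which the $\Re$‑constraint transforms into itself and the $\Im$‑constraint only loosens, using $s\le 1\le 1/s$.

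The hard part will be the second step: turning ``everything except the single cylinder crossing is $O(|w|)$'' into a rigorous estimate, which requires decomposing a representative curve into the standard‑coordinate pieces near the nodes exactly as in the proof of Lemma~\ref{lemma:period-bounds}, together with the observation that passing through a lower degenerating cylinder without crossing its core costs only its (small) circumference times a bounded winding number. The only genuinely new point beyond Lemma~\ref{lemma:period-bounds} is the perpendicularity of the one large contribution; it is precisely what keeps $\operatorname{area}\mathcal R(w)$ uniformly bounded as $w\to0$, whereas the crude bound $|\beta(X)|=O(1/|w|)$ by itself would only trap $\beta(X)$ in a disk of unbounded area.
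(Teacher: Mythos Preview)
Your proposal is correct and follows essentially the same route as the paper: decompose $\beta$ into the single cylinder-crossing piece plus a remainder, bound the remainder by $O(|w|)$ via the Period bounds argument (Lemma~\ref{lemma:period-bounds}), and for the crossing piece use the area constraint to bound the component perpendicular to $w$ by $1/|w|$ and bounded winding to bound the parallel component by $O(|w|)$. Your formulation of the rectangle via $\Re(z\bar w)$ and $\Im(z\bar w)$ is a clean way to handle the rotation that the paper treats by assuming $\gamma(X)$ purely imaginary, but the underlying argument is the same.
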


\begin{proof}
  The rectangle $\mathcal{R}(w)$ is defined as follows:

    \begin{align*}
      \mathcal{R}(w)=\mathcal{R}(re^{i\theta}) := \left\{e^{i (3\pi/2+\theta)}  (x+iy) : |x|\le \frac{1}{|w|} + C'|w|, \text{ } |y| \le (1+C')|w| \right\},
    \end{align*}
    where $C'$ is a constant depending on $\beta$ and $\bar X$, but not $X\in V$, that will be defined later in the proof.  
    Notice that $\mathcal{R}(w)$ gets longer and thinner as $w$ gets smaller, and has approximately fixed area independent of $w$ (when $w$ is small).   
\begin{figure}[]
\begin{center}
  \includegraphics[scale=0.56]{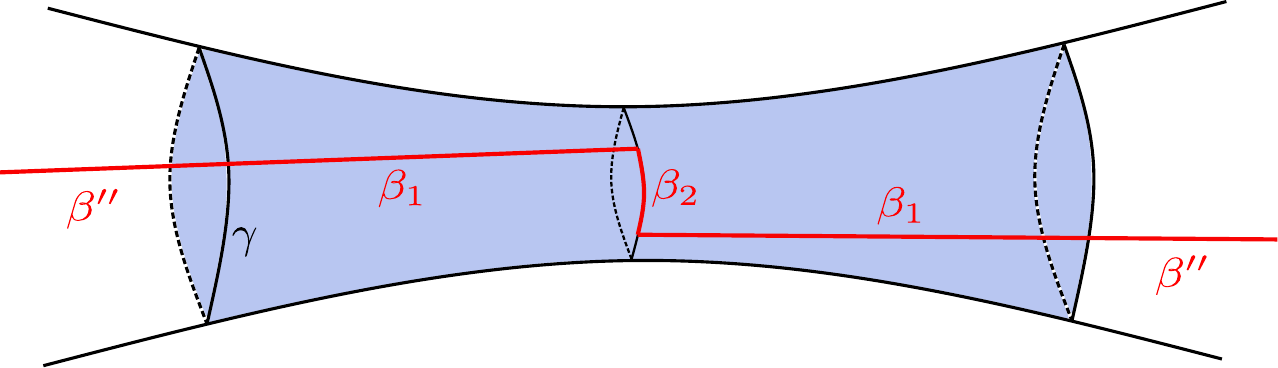}
  \caption{The result of plumbing together the annuli in Figure \ref{fig:cylinder_preplumb}.}
  \label{fig:cylinder_postplumb}
\end{center}
\end{figure}

  As in the proof of Lemma~\ref{lemma:period-bounds} (Period bounds), we divide up $X$ into pieces corresponding to elements of $\mathcal{S}$, and then take the intersection of $\beta$ with these pieces.  Let $\beta'$ be the piece that crosses the cylinder with circumference $\gamma$, and let $\beta''$ be the remaining part of $\beta$.   See Figure \ref{fig:cylinder_preplumb} and Figure \ref{fig:cylinder_postplumb}.  

  To control $\beta'$, we can assume that the situation is as in Figure \ref{fig:cylinder_preplumb}, where the differential on the left is given by $\frac{\gamma(X)}{2\pi i} \frac{du}{u}$, and on the right by $-\frac{\gamma(X)}{2\pi i} \frac{dv}{v}$ (since $\gamma$ is the cylinder circumference curve, the residues of the simple poles are $\pm \gamma(X)$).

  We assume for notational simplicity that $\gamma(X)$ is purely imaginary; we can deduce the general case from this by simply rotating everything.   In the figure, $\beta'$ consists of the red arcs that intersect the shaded annuli.  There are two contributions to $\beta'$.  The first, which we call $\beta_1$ is composed of the two straight segments connecting the circles of radii $1$ and $\sqrt{|t|}$, on either side of the nodes.  The second, $\beta_2$, is an arc of the circle of radius $\sqrt{t}$.  We see that $\operatorname{Re}(\beta'(X)) = \beta_1(X)$ and $i\cdot \operatorname{Im}(\beta'(X))=\beta_2(X)$.  The height of the cylinder is approximately $|\beta_1(X)|$, so its area is approximately $|\gamma(X)|\cdot |\beta_1(X)|$.  Since this must be less than $1$, we have
  $$|\beta_1(X)| \le \frac{1}{|\gamma(X)|}.$$
  To bound $\beta_2(X)$, we note that $\beta_2$ winds around the circle at most once (because of the way $V$ was defined in terms of restrictions of plumbing coordinates).  Suppose its endpoint is at angle $\rho\in [0,2\pi]$.  Then
  $$\beta_2(X) = \int_{\beta_2}\frac{\gamma(X)}{2\pi i} \frac{du}{u} = \frac{\gamma(X)}{2\pi} \int_0^{\rho} d\theta = \frac{\gamma(X)}{2\pi}\rho$$
  is a purely imaginary number of magnitude at most $|\gamma(X)|$.
  
  Summarizing the above, we get the following estimates for the real and imaginary parts of $\beta'(X)$: 
  \begin{align}
    |\operatorname{Re}(\beta'(X))| = |\beta_1(X)| &\le \frac{1}{|\gamma(X)|}, \label{real_beta}\\
    |\operatorname{Im}(\beta'(X))|=|-i\cdot \beta_2(X)| &\le |\gamma(X)|.  \label{imag_beta}
  \end{align}

  Now we bound the remaining piece $\beta''$.  Let $X'=\alpha X$ be the surface in the projective class of $X$ that is produced in the plumbing construction.  
  By assumption $\beta''$ lies on level subsurfaces at or below the level $Y$.  Arguing as in Lemma \ref{lemma:period-bounds} (Period bounds), we see that $|\beta''(X)|\le C \operatorname{size}_X(Y)$.  Now $\operatorname{size}_X(Y)$ is very close to a constant multiple of $\gamma(X')$, since $\gamma$ lies at level $Y$.  Hence $|\beta''(X')|\le C'|\gamma(X')|,$
  where $C'$ is a constant depending on $\beta$ (and $\bar X$).   Since $X$ is just a rescaling of $X'$, we get 
  $$|\beta''(X)|\le C'|\gamma(X)|.$$
  
  Finally, by combining this estimate for $\beta''(X)$ with the estimates \eqref{real_beta}, \eqref{imag_beta} for $\beta'(X)$, we see that the sum $\beta(X)=\beta'(X)+\beta''(X)$ must lie in $\mathcal{R}(\gamma(X))$, for the $\mathcal{R}$ defined at the beginning of the proof.  

  For $w=\gamma(X)$ small, which is the relevant regime, the area of this rectangle $\mathcal{R}(w)$ is bounded above by some $R$ depending only on $C'$.  The property $s\mathcal{R}(w) \subset \mathcal{R}(sw)$ holds for any $w$ and $0< s \le 1$.

\end{proof}

The next lemma bounds the length of a saddle connection from below in terms of the period of any fixed relative homology class that lies at or below the $\succ$-level of the saddle connection.  

\begin{lemma}[Saddle connection bounds]
  \label{lemma:sc-bounds}
  For $\gamma$ a relative homology class defined on the surfaces in $V$, there exists a constant $c>0$ (depending only on $\gamma$ and $V$) with the following property.  For any $X\in V$ and any $\succ \in \mathcal{O}$ consistent with $X$, we let $Y$ be the $\succ$-level of $\gamma$.  Let $s$ be any saddle connection on $X$ whose relative homology class has $\succ$-level at least $Y$. Then 
  $$|s(X)| \ge c \cdot|\gamma(X)|.$$
\end{lemma}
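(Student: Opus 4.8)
The plan is to pair the upper bound of Lemma~\ref{lemma:period-bounds} (Period bounds), applied to $\gamma$, with a matching \emph{lower} bound for $|s(X)|$ coming from flat geometry; the point is that $s$ is a geodesic, so $|s(X)|$ equals the flat length of $s$, which is \emph{additive} over any decomposition of $s$ into sub-arcs. Thus there is no cancellation to control, and it suffices to exhibit a single long sub-arc. Since $|s(X)|\ge c\,|\gamma(X)|$ is scale-invariant, I would first replace $X$ by the distinguished surface in its projective class; its area is then bounded above and below by constants depending only on $\bar X$, and the decomposition of $X$ into pieces from the proof of Lemma~\ref{lemma:period-bounds} is available. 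Write $Y$ for the level of $\gamma$ and $Y_s$ for the level of $[s]$, both with respect to $\succ$; by hypothesis $Y_s\succeq Y$. Lemma~\ref{lemma:period-bounds} gives $|\gamma(X)|\le C_1\operatorname{size}_X(Y)$ with $C_1$ depending only on $\bar X$, $V$ and $\gamma$, and consistency of $\succ$ with $X$ together with $Y_s\succeq Y$ gives $\operatorname{size}_X(Y_s)\ge\operatorname{size}_X(Y)$. So it is enough to find $c_1>0$, depending only on $\bar X$ and $V$, with $|s(X)|\ge c_1\operatorname{size}_X(Y_s)$; then $c=c_1/C_1$ works.

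The geometric input is a lower bound on the ``cost'' of essentially entering the high part of the surface. Let $N:=\bigcup_{Z\succeq Y_s}(\text{piece of }Z)\subset X$, a union of some level-subsurface pieces $P_Z$ (together with their attached degenerating cylinders) and possibly some floating cylinders. I claim there is a $c_1>0$ depending only on $\bar X$ and $V$ such that: (i) any nontrivial geodesic sub-arc that starts at a cone point of $X$ and lies in a piece $P_Z$ with $Z\succeq Y_s$, run until it first leaves $P_Z$ (or until its other endpoint), has length $\ge c_1\operatorname{size}_X(Z)$; and (ii) any geodesic sub-arc lying in $N$ with endpoints on $\partial N$ which is not $\partial$-parallel in its component of $N$ has length $\ge c_1\operatorname{size}_X(Y_s)$. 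Granting the scales computed in the proof of Lemma~\ref{lemma:period-bounds} — that $P_Z$ is, up to a lower-order perturbation from the moduli parameters, the modification differentials and the remerging of zeros, the surface obtained from the fixed $\bar X^{(i)}$ by deleting small discs around the nodes, rescaled by a factor whose modulus is $\operatorname{size}_X(Z)$; that a degenerating cylinder whose circumference lies at level $i$ has height comparable to its $\operatorname{size}_X$ and circumference comparable to $\operatorname{size}_X(\bar X^{(i)})$ — these reduce to one compactness fact: on each fixed surface obtained from $\bar X^{(i)}$ by deleting small discs (with its flat metric $|\eta^{(i)}|$), the infimum of lengths of non-$\partial$-parallel arcs, and the distance from each zero of $\eta^{(i)}$ to the boundary, are positive and stay bounded below over the bounded range of parameters. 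One then checks (ii) by walking a non-$\partial$-parallel geodesic arc through the pieces of its component of $N$: it must either cross a degenerating cylinder (length $\ge$ its height), or wind around a circle along which a cylinder is attached to some $P_Z$ (length $\ge$ that cylinder's circumference $\ge c_1\operatorname{size}_X(\bar X^{(i)})$), or be non-$\partial$-parallel inside a single $P_Z$ (length $\ge c_1\operatorname{size}_X(Z)$); since every $Z$ occurring satisfies $Z\succeq Y_s$, consistency of $\succ$ with $X$ bounds each of these below by $c_1\operatorname{size}_X(Y_s)$.

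With this in hand I would conclude as follows. By minimality in the definition of the level, $[s]$ is \emph{not} representable by a union of arcs/curves contained in $X\setminus N$ (such a representative would put the level of $[s]$ strictly below $Y_s$). Hence $s$ is not isotopic rel $\Sigma$ into $X\setminus N$; so either some endpoint of $s$ lies in $N$ — and then the sub-arc of $s$ from that endpoint until it first exits $N$ has, by (i), length $\ge c_1\operatorname{size}_X(Y_s)$ (and if $s$ never exits, then $s$ lies in a single $P_Z$ with $Z\succeq Y_s$, hence is a saddle connection of the rescaled fixed differential $\eta^{(i)}$, again of length $\ge c_1\operatorname{size}_X(Z)$) — or else every sub-arc of $s\cap N$ has endpoints on $\partial N$ and they cannot all be $\partial$-parallel in $N$ (otherwise one could isotope $s$ off $N$), so by (ii) some sub-arc $\sigma$ of $s$ has length $\ge c_1\operatorname{size}_X(Y_s)$. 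In either case $|s(X)|\ge c_1\operatorname{size}_X(Y_s)\ge c_1\operatorname{size}_X(Y)\ge(c_1/C_1)|\gamma(X)|$, as claimed.

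\textbf{The main obstacle} is the geometric input of the second paragraph: organizing the case analysis for geodesic arcs traversing $N$, and, more fundamentally, the uniform positivity of the essential-arc ``systole'' and of the zero-to-boundary distances on the level-subsurface pieces, uniform over the moduli and modification perturbations — the compactness statement that makes the whole argument go — together with the clean fact that a geodesic sub-arc too short to traverse \emph{any} piece at level $\succeq Y_s$ must be $\partial$-parallel, which is precisely what converts the purely homological non-representability coming from the definition of the level into a genuine length bound. Once these geometric facts are set up, the remainder — invoking Lemma~\ref{lemma:period-bounds} and chasing the inequalities through the consistency of $\succ$ — is routine.
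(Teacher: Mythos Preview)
Your proof is correct and follows essentially the same route as the paper. The paper packages your two geometric claims (i) and (ii) into a single notion --- the \emph{injectivity radius} of a subsurface $Y$ with boundary, defined as the infimum of flat lengths over arcs with endpoints allowed at singular points \emph{or} on the boundary that cannot be homotoped rel endpoints into the boundary --- and then observes directly that $s$ must meet some piece $\hat Z$ with $Z\succeq Y$ in such an arc, giving $|s(X)|\ge\operatorname{inj}(\hat Z)\ge c'\operatorname{size}_X(Z)$; this absorbs your case split (endpoint in $N$ versus crossing sub-arc) and your set $N$ into one line, but the substance is identical.
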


\begin{proof}

  Since the desired inequality is invariant under scaling $X$, we can assume that $X$ is the distinguished surface in the projective equivalence produced by the plumbing construction.
  
  Define the $\emph{injectivity radius}$ $\operatorname{inj}(Y)$ of a subsurface $Y$ (with boundary) of a translation surface to be the infimum of flat length over all curves/arcs (arcs are allowed to have endpoints at singular points or on the boundary) that cannot be homotoped (rel endpoints) to lie in the boundary.
  
  As in the proof of Lemma \ref{lemma:period-bounds} (Period bounds), we divide up $X$ into pieces corresponding to elements of $\mathcal{S}$, together with plumbing annuli.
  
  We claim that
  \begin{align}
    \label{eq:sad-inj}
    |s(X)| \ge \operatorname{inj}(\hat Z),
  \end{align}
  for some subsurface $\hat Z \subset X$ corresponding to an element $Z\in \mathcal{S}$ with $Z\succeq Y$.  In fact, by the definition of $\succ$-level of a relative homology class, $s$ must intersect a subsurface $Z\in \mathcal{S}$ of $\succ$-level at least $Y$ in a curve/arc that cannot be homotoped rel endpoints to lie in subsurfaces at smaller $\succ$-levels.   Hence the length of $s$ on $X$ must be at least the injectivity radius of $\hat Z$.  

Now we claim that 
  \begin{align}
    \label{eq:inj-size}
    \operatorname{inj}(\hat Z)\ge c' \cdot \operatorname{size}_X(Z),
  \end{align}
  where $c'$ is a constant depending only on $\bar X$.
  In fact, if we remove fixed size neighborhoods from the nodes of $Z$ as in the plumbing construction, the resulting surface with boundary has some non-zero injectivity radius $r$.  We then obtain $\hat Z$ from this by scaling by the parameter $t_{-1}^{a_{-1}}\cdots t_i^{a_i}$, adding a small modification differential, and then merging zeros.  The latter two operations only change the injectivity radius by a small amount, so the injectivity radius of $\hat Z$ is close to $r\cdot |t_{-1}^{a_{-1}}\cdots t_i^{a_i}|$, and $|t_{-1}^{a_{-1}}\cdots t_i^{a_i}|$ is exactly $\operatorname{size}_X(Z)$, so \eqref{eq:inj-size} follows.

  Now $Z$ was chosen such that $Z\succeq Y$, and since $X$ is assumed to be $\succ$-consistent, we have
  \begin{align}
    \label{eq:z-y}
    \operatorname{size}_X(Z) \ge \operatorname{size}_X(Y).
  \end{align}
  
  Finally, from the proof of Lemma \ref{lemma:period-bounds} (specifically inequality \eqref{eq:xprime}), using that our $X$ is the distinguished representative of its projective equivalence class, we get that
  \begin{align}
    \label{eq:size-gamma}
    \operatorname{size}_X(Y) \ge \frac{1}{C} \cdot |\gamma(X)|,
  \end{align}
  for some $C$ depending only on $\gamma$ and $V$.
  
    Stringing together the inequalities \eqref{eq:sad-inj}, \eqref{eq:inj-size}, \eqref{eq:z-y}, \eqref{eq:size-gamma} gives the desired inequality, for some $c$ depending only on $\bar X$. 

\end{proof}

\subsection{Choosing a basis}
In this section we describe how to choose a basis of $H_1(X,\Sigma;\Z)$, for surfaces $X\in V$, that is adapted to the surfaces in $V$ in the sense that homology classes living on smaller subsurfaces will generally come earlier in the basis.  The basis will depend on an ordering $\succ \in \mathcal{O}$ of subsurfaces $\mathcal{S}$.

Given $Z\in \mathcal{S}$, we define $H_{\preceq Z} \subset H_1(X,\Sigma;\C)$ to be the span of all elements $\gamma \in H_1(X,\Sigma;\Z)$ whose $\succ$-level is $\preceq Z$.

\begin{lemma}[Basis]
  \label{lemma:basis}
  Fix an ordering $\succ\in \mathcal{O}$. There is a basis $\alpha_1,\ldots,\alpha_{m'}$ for $H_1(X,\Sigma;\C)$ consisting of integral classes with the following properties.
    \begin{enumerate}[(i)]
  \item For each $Z$ a basis for the vector space $H_{\preceq Z}$ is given by $\alpha_1,\ldots,\alpha_j$ for some $j$.
  \item The number of crossings of each $\alpha_i$ with the set of $\succ$-wide cylinder circumference curves is at most $1$.  \label{item:wide}
  \item If $\alpha_i$ intersects a $\succ$-wide cylinder circumference $\gamma$, then $\alpha_i$ has a representative such that the level subsurfaces which the representative intersects lie at or below the $\succ$-level of $\gamma$ (of course $\alpha_i$ also crosses the cylinder, which may be at a higher $\succ$-level, but the cylinder is not a level subsurface).  \label{item:cross}
  \end{enumerate}
\end{lemma}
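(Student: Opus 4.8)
The plan is to build the basis by a filtration argument that respects the ordering $\succ$, processing the subsurfaces $Z \in \mathcal{S}$ from smallest to largest. First I would enumerate $\mathcal{S}$ as $Z_1 \prec Z_2 \prec \cdots \prec Z_M$ compatibly with $\succ$, and set $H_{\preceq Z_0} := 0$. The spaces $H_{\preceq Z_1} \subset H_{\preceq Z_2} \subset \cdots \subset H_{\preceq Z_M} = H_1(X,\Sigma;\C)$ form an increasing filtration of the relative homology by the definition given just before the lemma (the last equality holds because every integral class has \emph{some} level, the minimal $Z$ through which it can be pushed, and that level is some element of $\mathcal{S}$). The basis will be assembled by, at the $r$-th stage, choosing integral classes $\alpha_{j_{r-1}+1},\ldots,\alpha_{j_r}$ whose images span the quotient $H_{\preceq Z_r}/H_{\preceq Z_{r-1}}$; concatenating these over all $r$ gives a basis with property (i) automatically, since for each $Z = Z_r$ the classes $\alpha_1,\ldots,\alpha_{j_r}$ span $H_{\preceq Z}$ and are independent. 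The only real content is that at each stage we can choose the new classes to be integral \emph{and} to satisfy the geometric conditions (ii), (iii).

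For the stage corresponding to a subsurface $Z$ that is \emph{not} a $\succ$-wide cylinder, I would pick any integral classes completing a basis of $H_{\preceq Z_{r-1}}$ to one of $H_{\preceq Z}$; these classes can be represented by arcs/curves contained in $\hat Z$ together with the level subsurfaces at or below $Z$, and since $Z$ itself is either a level subsurface or a non-$\succ$-wide degenerating cylinder, any such representative crosses $\succ$-wide cylinder circumference curves zero times (a $\succ$-wide cylinder is $\succ$-greater than every level subsurface, and than $Z$, so a class at level $\preceq Z$ has a representative avoiding it entirely). Hence (ii) and (iii) hold vacuously for these $\alpha_i$. For the stage corresponding to a $\succ$-wide cylinder $C$ with core curve $\gamma_C$ at level subsurface $Y_C$: the quotient $H_{\preceq C}/H_{\preceq (C^-)}$, where $C^-$ is the predecessor of $C$ in the enumeration, is spanned by $[\gamma_C]$ together with a single class dual to it, namely a class $\beta_C$ with a representative crossing $\gamma_C$ exactly once and otherwise lying on level subsurfaces at or below $Y_C$ (such a $\beta_C$ exists because smoothing the horizontal node attaches a handle/cylinder whose homology contributes exactly a hyperbolic pair). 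Taking $\alpha$'s to be $\gamma_C$ and $\beta_C$ gives property (ii) (one crossing for $\beta_C$, one for $\gamma_C$ viewed as crossing itself — or zero, depending on convention, but in any case $\le 1$) and property (iii) for $\beta_C$ by construction.

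The main obstacle I anticipate is the bookkeeping needed to guarantee that the dimension count at the $\succ$-wide-cylinder stage really is exactly $2$ and that one can choose the dual class $\beta_C$ with the stated level constraint \emph{simultaneously} with integrality. Concretely, one must argue that the classes already chosen at earlier (smaller) stages, together with $\gamma_C$ and one crossing class, exhaust $H_{\preceq C}$; this follows from comparing with the standard symplectic/relative homology basis adapted to the nodal degeneration, but making it precise requires knowing that the vanishing-cycle filtration on $H_1(X,\Sigma;\Z)$ from the multi-scale structure agrees with the level filtration $H_{\preceq Z}$ here — which is where the explicit description of the neighborhood of $\bar X$ from Section~\ref{sec:lms} is used. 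I would handle this by choosing, once and for all, a geometric symplectic basis on the limit stable surface $\bar X$ adapted to the enhanced level graph (core curves of degenerating cylinders and their duals, plus classes living on each level subsurface), transporting it across $V$ using simple connectivity of $\P V$ (Lemma~\ref{lemma:semianalytic}), and then reindexing it according to $\succ$; the three asserted properties are then immediate from the construction of that geometric basis, and integrality is automatic since all the defining curves are simple closed curves or arcs.
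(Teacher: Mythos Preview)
Your overall strategy---build the basis by filtration through the chain $H_{\preceq Z_1}\subset\cdots\subset H_{\preceq Z_M}$, handling non-$\succ$-wide subsurfaces first and then the $\succ$-wide cylinders---is exactly the paper's approach. But there is a concrete error at the $\succ$-wide cylinder stage.

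You assert that the quotient $H_{\preceq C}/H_{\preceq C^-}$ is spanned by the circumference class $[\gamma_C]$ together with a dual crossing class $\beta_C$, and you then add \emph{both} as new basis elements. This is wrong: the circumference curve $\gamma_C$ is homotopic into the level subsurface $Y_C$ on which it sits, so its level is $\preceq Y_C$. Since $C$ is $\succ$-wide we have $Y_C \preceq \bar X^{(0)} \prec C$, hence $[\gamma_C]\in H_{\preceq Y_C}\subset H_{\preceq C^-}$ already. Adding $\gamma_C$ again produces a linearly dependent set, not a basis. The quotient at stage $C$ is at most one-dimensional, generated (if nonzero) by the crossing class alone. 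Moreover it can be zero-dimensional: if $\gamma_C$ is dependent on circumference curves of wide cylinders already processed, then $\beta_C$ is redundant too. The paper handles this by adding a crossing class only for those wide cylinders whose circumference curve is independent of the elements chosen so far. With that correction your argument goes through and matches the paper's.

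Two smaller remarks. First, the paper constructs $\beta_C$ explicitly by walking downward from each side of the cylinder through level subsurfaces to a locally minimal component of the level graph (which necessarily carries a zero) and joining the two zeros; this is what makes property (iii) transparent, whereas you only assert existence. Second, the elaborate machinery you anticipate needing in your last paragraph (symplectic bases on $\bar X$, comparison of filtrations) is not used in the paper's proof; once the dimension issue above is corrected, the construction is direct and no such comparison is required.
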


\begin{proof}
  Begin by choosing an integral basis for $H_{\preceq \bar X^{(-N)}}$ (note that for any $\succ \in \mathcal{O}$, $\bar X^{(-N)}$ is always the $\prec$-smallest element of $\mathcal{S}$).  Then continue by appending elements that together with the previously added elements give a basis for $H_{\preceq Z}$, where $Z$ is the second $\prec$-smallest element of $\mathcal{S}$.  Continue in this manner through $Z=\bar X^{(0)}$, the largest level subsurface.  None of the $\alpha_i$ added up to this point will cross $\succ$-wide cylinder circumference curves, so (\ref{item:wide}) and (\ref{item:cross}) are satisfied for these $\alpha_i$.

  By definition of $\succ$-wide, every $Z$ with $Z\succ \bar X^{(0)}$ is a $\succ$-wide cylinder.  Each $\succ$-wide cylinder joins two components, possibly equal.  We can find zeros of the differential by moving to either side of the cylinder and then moving down level subsurfaces until we come to a locally minimal component in the level graph (such a component must have a zero).  We connect these zeros (which might coincide), giving a relative homology class $\alpha_i$ that satisfies (\ref{item:wide}) and  (\ref{item:cross}).  We add this $\alpha_i$ to the basis.  We do this one by one for each new $\succ$-wide cylinder cross curve whose circumference curve is independent of the current basis elements.  
\end{proof}

\subsection{Volume for a single linear subspace}

Using the lemmas established in the previous two sections, we can now prove the volume estimate for a single linear subspace.

\begin{lemma}[Volume for single linear subspace]
  \label{lemma:vol-subspace}
Let $W$ be a linear subspace of a $V$ from Section \ref{sec:vk}.  Then 
$$\mu_W^1\left(L_{\epsilon_1,\ldots,\epsilon_k}^W \right) = O(\epsilon_1^2 \cdots \epsilon_k^2).$$
\end{lemma}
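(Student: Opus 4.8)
The plan is to reduce, using the coning definition of $\mu_W^1$ and the fact that rescaling a unit-area surface with an $\epsilon_i$-short saddle connection produces a surface of area $\le 1$ still carrying an $\epsilon_i$-short saddle connection, to bounding the ordinary Lebesgue measure $\mu_W$ of
$$R := \{X\in W : \operatorname{area}(X)\le 1,\ X\text{ has }W\text{-independent saddle connections }s_1,\ldots,s_k\text{ with }|s_i|\le\epsilon_i\}.$$
Since $\mathcal{O}$ is finite it suffices to bound $\mu_W$ of the part of $R$ consisting of $X$ for which a fixed $\succ\in\mathcal{O}$ is consistent with $X$; and for fixed $\succ$ I further partition according to the levels $Y_1,\ldots,Y_k\in\mathcal{S}$ (with respect to $\succ$) of the homology classes $[s_1],\ldots,[s_k]$, a finite partition. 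So I must bound each such piece by $C\,\epsilon_1^2\cdots\epsilon_k^2$ with $C$ depending only on $\bar X$, $W$, $\succ$, $(Y_1,\ldots,Y_k)$ — uniformly in $\vec\epsilon$.

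Next I set up coordinates on $W$ adapted to the short saddle connections. Starting from the basis $\alpha_1,\ldots,\alpha_{m'}$ of Lemma \ref{lemma:basis} (Basis) for $\succ$, I extract greedily, in order of increasing level, a subset of the $\alpha_i|_W$ forming a basis of $W^*$; property (i) of Lemma \ref{lemma:basis} then guarantees that for each $Z\in\mathcal{S}$ the extracted functionals of level $\preceq Z$ form a basis of $W^*_{\preceq Z}:=\{\gamma|_W:\gamma\in H_{\preceq Z}\}$. I then build a basis $\theta_1,\ldots,\theta_d$ of $W^*$ by sweeping through $\mathcal{S}$ in increasing $\succ$-order: at each $Y$, first insert $s_j|_W$ for those $j$ with $Y_j=Y$ whose restriction is not yet in the span (legitimate since $[s_j]\in H_{\preceq Y_j}$ and the $s_j|_W$ are independent by $W$-independence), then complete to a basis of $W^*_{\preceq Y}$ using the extracted functionals of level $Y$. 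The resulting coordinates $w_\ell=\theta_\ell(X)$ have non-decreasing level, $\mu_W$ is a constant multiple of Lebesgue measure in them, and I estimate the measure of the piece by Fubini, integrating the $w_\ell$ in order of increasing level, so that when I bound the $w_\ell$-slice every strictly-lower-level coordinate is already fixed.

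The per-coordinate estimates are: (a) if $\theta_\ell=s_j|_W$, then $|s_j(X)|\le\epsilon_j$ confines $w_\ell$ to a disc of radius $\epsilon_j$, contributing a factor $O(\epsilon_j^2)$; (b) if $\theta_\ell$ is an extracted functional at a level subsurface or a non-$\succ$-wide cylinder, then by Lemma \ref{lemma:period-bounds} (Period bounds), consistency of $\succ$ with $X$, and $\operatorname{area}(X)\le1$ we get $|w_\ell|\le C\operatorname{size}_X(\bar X^{(0)})=O(1)$, contributing $O(1)$; (c) if $\theta_\ell=\alpha_i|_W$ is an extracted functional at a $\succ$-wide cylinder $C$, its core curve $\gamma$ has a representative in the level subsurface carrying $C$'s circumference, hence lies at a level $\prec C$, so $\gamma(X)$ is determined by the already-fixed lower coordinates, and Lemma \ref{lemma:cyl-bounds} (Cylinder bounds) — whose hypotheses are exactly properties (\ref{item:wide}) and (\ref{item:cross}) of Lemma \ref{lemma:basis} — puts $w_\ell$ in a rectangle $\mathcal{R}(\gamma(X))$ of area $\le R=O(1)$, contributing $O(1)$. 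Multiplying these across the coordinates (the $s_j$ are charged to distinct coordinates by $W$-independence) gives the desired $O(\epsilon_1^2\cdots\epsilon_k^2)$ for the piece, and summing over $\succ$ and $(Y_1,\ldots,Y_k)$ proves the lemma.

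The main obstacle is the clash between (a) and (c): if some $s_j$ crosses a $\succ$-wide cylinder $C$ (so $Y_j=C$), then $s_j$ may wind around the core $\gamma$ of $C$ an unbounded number $m$ of times, so $[s_j]$ is not among finitely many classes and ``$s_j|_W$ is a coordinate'' is not literally a fixed functional. The fix is to keep, at level $C$, the adapted cross-once functional $\alpha_i$ of Lemma \ref{lemma:basis} rather than $s_j|_W$, write $s_j(X)=\alpha_i(X)+m\,\gamma(X)+(\text{terms fixed by the lower coordinates})$, and bound the $\alpha_i$-slice by $\mathcal{R}(\gamma(X))\cap\bigcup_m\{w:|w+m\gamma(X)+\cdots|\le\epsilon_j\}$. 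Because the translates $-m\gamma(X)+O(1)$ march along the \emph{short} side of $\mathcal{R}(\gamma(X))$ — the side of length $\asymp|\gamma(X)|$, the long side being $\asymp1/|\gamma(X)|$ and perpendicular to $\gamma(X)$ — only $O(\epsilon_j/|\gamma(X)|+1)$ values of $m$ give a nonempty intersection, each of area $O\!\big(\min(\epsilon_j,|\gamma(X)|)^2+\epsilon_j|\gamma(X)|\big)$, so the union still has area $O(\epsilon_j^2)$, exactly the needed factor. Carrying out this case analysis carefully, together with verifying that the greedy-basis construction is compatible with $W$-independence so that the $s_j$ really are accounted for by distinct coordinates (including the borderline case where $s_j|_W$ happens to lie in $W^*_{\prec Y_j}$), is where the bulk of the bookkeeping lies.
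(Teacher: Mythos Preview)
Your overall architecture (reduce to the cone, fix $\succ\in\mathcal{O}$, build a basis from Lemma~\ref{lemma:basis}, then bound coordinate by coordinate via Lemmas~\ref{lemma:period-bounds} and~\ref{lemma:cyl-bounds}) matches the paper. The gap is in how you handle the $\epsilon_j$-short coordinates. You insert the \emph{saddle connections themselves}, $s_j|_W$, into the basis $\theta_1,\ldots,\theta_d$. But $s_j$ is a saddle connection on $X$, and its homology class $[s_j]$ is \emph{not} determined by the combinatorial data $(\succ, Y_1,\ldots,Y_k)$: two different surfaces $X,X'$ in the same piece can have witnessing saddle connections in completely different integral homology classes at level $Y_j$ (there are infinitely many such classes). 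So your ``basis'' $\theta_1,\ldots,\theta_d$ varies with $X$, and the Fubini integral $\int dw_d\cdots dw_1$ is not over a fixed coordinate system. Equivalently, the change-of-basis Jacobian from the fixed extracted basis $\{\beta_i\}$ to your $\{\theta_\ell\}$ depends on the integers expressing $[s_j]$ in the $\alpha$-basis, and these are not bounded. You diagnose this correctly in the $\succ$-wide cylinder case (the winding number $m$), but the same unboundedness occurs already when $Y_j$ is a level subsurface: a short saddle connection on $\bar X^{(i)}$ can represent any of infinitely many classes in $H_{\preceq \bar X^{(i)}}$.

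The paper avoids this entirely via Lemma~\ref{lemma:sc-bounds} (Saddle connection bounds), which you do not invoke. That lemma says: if $s_j$ is a saddle connection at level $\succeq Y$ and $\gamma$ is any fixed class at level $\preceq Y$, then $|s_j(X)|\ge c\,|\gamma(X)|$. Applied with $\gamma=\beta_{\phi(j)}$ (a fixed extracted basis element at level $\preceq Y_j$), the condition $|s_j(X)|\le\epsilon_j$ becomes $|\beta_{\phi(j)}(X)|\le \epsilon_j/c$. Thus one never needs to put the variable $s_j$ into the basis at all: the constraint is transferred to a \emph{fixed} functional, and $W$-independence of the $s_j$ ensures the $\phi(j)$ can be chosen distinct. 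After this step the remainder of the argument is exactly your cases (b) and (c), and your cylinder ``fix'' is unnecessary. If you want to repair your approach rather than adopt the paper's, you would need to sum your bound over all possible tuples $([s_1],\ldots,[s_k])$ of integral classes at the given levels and show convergence --- which ultimately forces an estimate equivalent to Lemma~\ref{lemma:sc-bounds} anyway.
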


\begin{proof}

  Our first goal is to choose a basis for linear functionals on $W$ which we will use to do the volume estimation.  Fix an ordering $\succ$, and use Lemma \ref{lemma:basis} (Basis) to choose a basis $\alpha_1,\ldots,\alpha_{m'}$ for $H_1(X,\Sigma;\C)$ satisfying the three conditions given by that lemma. 

The choice of $\{\alpha_i\}$ basis above was made independently of the subspace $W$.  Now extract a basis $\{\beta_1,\ldots,\beta_m\} \subset \{\alpha_1,\ldots,\alpha_{m'}\}$ for $W^*$ as follows.  
Let $\beta_1=\alpha_1$, and suppose we have chosen $\beta_1,\ldots,\beta_j$.  Take $\beta_{j+1}$ to be $\alpha_i$, where $i$ is the smallest index such that $\alpha_i,\beta_1,\ldots,\beta_j$ give linearly independent functionals on the subspace $W$.  Continue doing this until no additional $\alpha_i$ can be added satisfying the conditions.  Note that by construction, for each $Z\in \mathcal{S}$ the subspace of $W^*$ given by elements of $H_{\preceq Z}$ has basis $\beta_1,\ldots,\beta_j$ for some $j$.

Our next goal is to show that certain sets defined by inequalities on the $\beta_i$ functionals contain all the surfaces in $L_{\epsilon_1,\ldots,\epsilon_k}^W$ that are $\succ$-consistent.  

\medskip \noindent {\bf Claim:}
If $X$ is $\succ$-consistent and $X\in L_{\epsilon_1,\ldots,\epsilon_k}^W$, then there exists a bijection $\phi:\{1,\ldots, k\}\to \{1,\ldots, k\}$ (i.e. a permutation) such that  $X$ is in the set
\begin{align*}  
   B^{\phi, \succ}_{\epsilon_1,\ldots,\epsilon_k}:=\big\{ &X'\in W^1: X' \text{ is } \succ\text{-consistent}, \text{ } \\
                                                         &|\beta_{\phi(1)}(X')|\le \epsilon_1/c, \ldots, |\beta_{\phi(k)} (X')|\le \epsilon_k/c \big\},
\end{align*}
where $c>0$ is the minimum of the constants from Lemma \ref{lemma:sc-bounds} (Saddle connection bounds) applied with $\gamma=\beta_1,\ldots,\beta_k$.

\medskip \noindent\emph{Proof of Claim:}  In fact, by definition of $L_{\epsilon_1,\ldots,\epsilon_k}^W$, such an $X$ must have saddle connections $s_1,\ldots,s_k$ that are $W$-independent and such that $|s_i(X)|\le \epsilon_i$.   Because of the way the $\beta_i$ were chosen, for each $s_i$ we can find an element of $\beta_{\phi(i)}$ such that
  \begin{enumerate}[(a)]
  \item the $\succ$-level of $s_i$ is at least that of $\beta_{\phi(i)}$, and
  \item all the $\phi(i)$ are distinct.
  \end{enumerate}
  (Concretely, if the $s_i$ are in increasing order in terms of $\succ$-level, we can simply take $\phi(i)=i$; otherwise, the situation is just a permutation of this).  Now by Lemma \ref{lemma:sc-bounds} (Saddle connection bounds), we have $\epsilon_i \ge |s_i(X)| \ge c |\beta_{\phi(i)}(X)|$, so $X$ is in the set $B^{\phi, \succ}_{\epsilon_1,\ldots,\epsilon_k}$.  This completes the proof of the Claim.  

  \medskip
  
  Our goal now is to establish the inclusion (\ref{eq:inclusion}) below, which we will then use to estimate the volume of $B^{\phi, \succ}_{\epsilon_1,\ldots,\epsilon_k}$.
  The definition of $B^{\phi, \succ}_{\epsilon_1,\ldots,\epsilon_k}$ gives bounds on $\beta_1,\ldots,\beta_k$; we now control the rest of the $\beta_i$ for $X'$ that are $\succ$-consistent.  
  Roughly, some of these $\beta_i$ do not cross any $\succ$-wide cylinders; for these $\beta_i$, the absolute value is bounded.
  The remaining $\beta_i$ can be long because they cross $\succ$-wide cylinders, but nevertheless, after fixing the other periods, such a $\beta_i$ lies in a set of bounded area. 
  
  To formalize the above, we find the index $\ell$ such that all $\beta_i$ with $i\le \ell$ lie at or below $\succ$-level $\bar X^{(0)}$ and all $\beta_i$ with $i>\ell$ lie above $\succ$-level $\bar X^{(0)}$ (and hence cross $\succ$-wide cylinders).
  
  First consider the $\beta_i$ for $i\le \ell$.  Using Lemma \ref{lemma:period-bounds} (Period bounds) and the fact that, over surfaces in $V$, the size of any level subsurface is bounded from above, we get that there exists a $K$ such that for any $X'\in W^1$ consistent with $\succ$, 
  \begin{align}
    \label{eq:small-i} 
    |\beta_i(X')|< K,
  \end{align}
when $i\le \ell$.  
  
  Next we consider $\beta_i$ for $i>\ell$.  Such a $\beta_i$ crosses exactly one $\succ$-wide cylinder circumference curve $\gamma_i$, and since (\ref{item:cross}) of Lemma \ref{lemma:basis} (Basis) also holds, all hypotheses of Lemma \ref{lemma:cyl-bounds} (Cylinder bounds) are satisfied.  
  Hence by that lemma, $\beta_i(X') \in \mathcal{R}(\gamma_i(X'))$, where $\mathcal{R}$ is a rectangle of area at most $R$. 
  Now there is some linear function $f_i$ such that $\gamma_i(X') = f_i(\beta_1(X'),\ldots,\beta_{\ell}(X'))$ for all $X'\in W^1$.
  Let $\mathcal{R}_i(z_1,\ldots,z_\ell):=\mathcal{R}(f_i(z_1,\ldots,z_\ell)). $ So 
    \begin{align}
      \label{eq:large-i}
      \beta_i(X') \in \mathcal{R}_{i}(\beta_1(X'),\ldots,\beta_{\ell}(X')),
    \end{align}
    when $i>\ell$. Furthermore, by the inclusion property given by Lemma \ref{lemma:cyl-bounds} (Cylinder bounds), for any $s\le 1$,
  \begin{align}
    s\mathcal{R}_i(z_1,\ldots,z_\ell) \subset \mathcal{R}_i(sz_1,\ldots,sz_\ell)). \label{eq:R}
  \end{align}

  Now putting the cases \eqref{eq:small-i} and \eqref{eq:large-i} together yields the desired inclusion:
  \begin{equation}
         \label{eq:inclusion}
    \begin{aligned}
    B^{\phi, \succ}_{\epsilon_1,\ldots,\epsilon_k} \subset & \big\{ X'\in W^1 : |\beta_{\phi(1)}(X')|\le \epsilon_1/c, \ldots, |\beta_{\phi(k)} (X')|\le \epsilon_k/c, \\    
    &|\beta_{k+1}(X')| \le K, \ldots, |\beta_{\ell}(X')| \le K, \\
    &\beta_{\ell+1}(X') \in \mathcal{R}_{\ell+1}\left(\beta_1(X'),\ldots,\beta_{\ell}(X')\right), \ldots, \beta_{j}(X') \in \mathcal{R}_{j}\left(\beta_1(X'),\ldots,\beta_{\ell}(X')\right)\big\}.
    \end{aligned}
  \end{equation}

  Now we can compute $\mu_W$ volume, up to a constant factor, by taking standard Lebesgue volume $\lambda$ on $\C^j$ with respect to any basis.  In particular, we can work with the basis $z_1,\ldots,z_j$ such that the $\beta_1,\ldots,\beta_j$ are the coordinate functions with respect to this basis.  We then compute using \eqref{eq:inclusion} and \eqref{eq:R}:
  
  \begin{align*}
    \mu_W^1(B^{\phi, \succ}_{\epsilon_1,\ldots,\epsilon_k} ) = & \mu_W\left( \left\{ sX': 0\le s \le 1, X'\in B^{\phi, \succ}_{\epsilon_1,\ldots,\epsilon_k}\right\} \right) \\
    \le & \lambda \big(\big\{(sz_1,\ldots,sz_j):  0\le s \le 1, \enskip |z_{\phi(1)}|\le \epsilon_1/c, \ldots, |z_{\phi(k)} |\le \epsilon_k/c, \enskip \\
                                                               & \quad \enskip |z_{k+1}|, \ldots, |z_{\ell}| \le K, \\
                                                               & \quad \enskip  z_{\ell+1} \in \mathcal{R}_{\ell+1}(z_1,\ldots,z_{\ell}), \ldots, z_{j} \in \mathcal{R}_{j}(z_1,\ldots,z_{\ell})  \big\}\big) \\
    = & \lambda \big(\big\{(sz_1,\ldots,sz_j):  0\le s \le 1, \enskip |z_{\phi(1)}|\le \epsilon_1/c, \ldots, |z_{\phi(k)} |\le \epsilon_k/c, \\
                                                               & \quad \enskip |z_{k+1}|, \ldots, |z_{\ell}| \le K,\\ 
                                                               &\quad \enskip sz_{\ell+1} \in s\mathcal{R}_{\ell+1}(z_1,\ldots,z_{\ell}), \ldots, sz_{j} \in s\mathcal{R}_{j}(z_1,\ldots,z_{\ell})  \}) \\ %
    \le & \lambda \big (\big\{(sz_1,\ldots,sz_j): 0\le s \le 1, \enskip |sz_{\phi(1)}|\le \epsilon_1/c, \ldots, |sz_{\phi(k)}|\le \epsilon_k/c, \\
                                                               &\quad \enskip |s z_{k+1}|, \ldots, |s z_{\ell}| \le K, \\
                                                               & \quad \enskip  sz_{\ell+1} \in \mathcal{R}_{\ell+1}(sz_1,\ldots,sz_{\ell}), \ldots, sz_{j} \in \mathcal{R}_{j}(sz_1,\ldots,sz_{\ell})  \big\}\big) \\
    = & \lambda \big(\big\{(z_1,\ldots,z_j): |z_{\phi(1)}|\le \epsilon_1/c, \ldots, |z_{\phi(k)} |\le \epsilon_k/c, \\
                                                               & \quad \enskip |z_{k+1}|, \ldots, |z_{\ell}| \le K, \\
                                                               & \quad \enskip  z_{\ell+1} \in \mathcal{R}_{\ell+1}(z_1,\ldots,z_{\ell}), \ldots, z_{j} \in \mathcal{R}_{j}(z_1,\ldots,z_{\ell})  \big\}\big) \\
    = &\int_{B(\epsilon_1/c)} \cdots \int_{B(\epsilon_k/c)}  \int_{B(K)} \cdots \int_{B(K)} \\
                                                               & \int_{\mathcal{R}_{\ell+1}(z_1,\ldots,z_{\ell})} \cdots \int_{\mathcal{R}_{j}(z_1,\ldots,z_{\ell})} dz_j\wedge d\bar z_j  \cdots dz_{1} \wedge d\bar z_{1} \\
    =& O\left(\epsilon_1^2 \cdots \epsilon_k^2 \cdot K^{\ell-k} \cdot R^{j-\ell}\right)\\
    = & O\left(\epsilon_1^2 \cdots \epsilon_k^2\right).
  \end{align*}

  Since every $X \in L^W_{\epsilon_1,\ldots,\epsilon_k}$ is in some $B^{\phi, \succ}_{\epsilon_1,\ldots,\epsilon_k}$ for one of the finitely many choices of $(\phi,\succ)$, adding up the estimates above over all the $(\phi,\succ)$ gives the desired inequality.
  \end{proof}

\subsection{Proof of local volume bound}
\label{sec:proof-local}

In the previous section we established the volume bound for a single linear subspace.  We now combine this with the facts established in Section~\ref{sec:semianalytic-pieces} to prove Proposition \ref{prop:local-regularity} (Local volume bound).
\begin{proof}[Proof of  Proposition \ref{prop:local-regularity} (Local volume bound)]
  By Lemma \ref{lemma:semianalytic}, we get a finite collection of semianalytic sets $\P V_i$ that cover $U\cap \P\H$, where $U$ is some neighborhood of $\bar X$ in $\barPH$.  Hence $p^{-1}(U) \subset \bigcup V_i$, and so
   $$\mu_\M^1 \left( L_{\epsilon_1,\ldots,\epsilon_k}^\M \cap p^{-1}(U)\right) \le \sum_i \mu_\M^1 \left( L_{\epsilon_1,\ldots,\epsilon_k}^\M \cap V_i\right).$$

  Let $\{W_{\ell,i}\}$ be the finite collection of all the linear subspaces $W_\ell$ that we get from applying  Lemma \ref{lemma:finite-subspaces} to each of the sets $V_i$.  After summing, that lemma gives 
   \begin{align*}
     \sum_i \mu_\M^1 \left( L_{\epsilon_1,\ldots,\epsilon_k}^\M \cap V_i\right) & \le C\cdot \sum_{\ell,i}\mu_{W_{\ell,i}}^1\left(L_{\epsilon_1,\ldots,\epsilon_k}^{W_{\ell,i}}\right),
   \end{align*}
   for some constant $C$.  

  Then applying Lemma \ref{lemma:vol-subspace} to each subspace $W_{\ell,i}$, we get
  \begin{align*}
    \sum_{\ell,i}\mu_{W_{\ell,i}}^1\left(L_{\epsilon_1,\ldots,\epsilon_k}^{W_{\ell,i}}\right) & = O(\epsilon_1^2 \cdots \epsilon_k^2).
  \end{align*}

  Combining the three inequalities above gives
  $$\mu_\M^1 \left( L_{\epsilon_1,\ldots,\epsilon_k}^\M\cap p^{-1}(U)\right) =O(\epsilon_1^2 \cdots \epsilon_k^2) ,$$
    as desired.  
  
\end{proof}

\section{Proof of Theorem \ref{thm:regularity}}
\label{sec:proof}
To prove the main theorem, all that remains is to combine the local volume estimate with compactness of $\barPH$.  

\begin{proof}[Proof of Theorem \ref{thm:regularity} given Proposition \ref{prop:local-regularity} (Local volume bound)]
    We define an open cover $\{U_X\}_{X\in \barPH}$ of $\barPH$ as follows.  For a boundary point $X\in \barPH - \P\H$, take $U_X$ to be the open set given by Proposition \ref{prop:local-regularity} (Local volume bound).  For a point $X\in \P\H$, we take $U_X$ to be any open neighborhood of $X$ contained in some compact subset of $\P\H$.  

    We have an open cover of the set $\barPH$, which is a compact space (Theorem \ref{thm:compact}).  Thus there is a finite subcover $U_{X_1,}, \ldots, U_{X_n}$.  For the $X_i$ in this list that are in the boundary, we use the volume estimate given in Proposition \ref{prop:local-regularity} (Local volume bound).  We take $C'$ to be the maximum of the $C$'s given by that proposition over the different $X_i$ in the boundary.  So for such $i$, 
  $$\mu_\M^1 (p^{-1}(U_{X_i})\cap L_{\epsilon_1,\ldots,\epsilon_k}^\M) \le C' \epsilon_1^2 \cdots \epsilon_k^2.$$
  For the $X_i$ not in the boundary, $U_{X_i}$ is contained in some compact set $K$, and all surfaces in $K$ have a uniform lower bound on saddle connection length.  Thus if any of the $\epsilon_i$ are small, the above estimate automatically holds, since the left hand side is zero.  In the case where none of the $\epsilon_i$ are small, we can enlarge $C'$ so that the estimate holds (we can do this without knowing a priori that $\mu_\M^1$ is a finite measure - in fact $\mu^1_\M(p^{-1}(U_{X_i})) \le \mu^1_\M(p^{-1}(K))$ which is finite, since $\mu^1_\M$ is locally finite and $p^{-1}(K)$ is compact).  Since $\bigcup U_{X_i}$ contains $\P\H$, we have that $\bigcup p^{-1}(U_{X_i})$ contains $\H$.  Adding everything up gives
  $$\mu_\M^1 (L_{\epsilon_1,\ldots,\epsilon_k}^\M) \le nC' \epsilon_1^2 \cdots \epsilon_k^2,$$
\end{proof}

\bibliography{sources}{}
\bibliographystyle{amsalpha}

\end{document}